\newtheorem{theorem}{Theorem}[section]
\newtheorem{lemma}[theorem]{Lemma}
\newtheorem{corollary}[theorem]{Corollary}
\newtheorem{proposition}[theorem]{Proposition}
\newtheorem{lettertheorem}{Theorem}
\newtheorem{letterlemma}[lettertheorem]{Lemma}
\newtheorem{letterproposition}[lettertheorem]{Proposition}
\theoremstyle{definition}
\theoremstyle{remark}
\theoremstyle{remarks}
\numberwithin{equation}{section}
\newcommand{\B}{\mathcal{B}}
\newcommand{\D}{\mathbb{D}}
\newcommand{\DD}{\widehat{\mathcal{D}}}
\newcommand{\K}{\mathcal{K}}
\newcommand{\N}{\mathbb{N}}
\newcommand{\M}{\mathcal{M}}
\newcommand{\R}{\mathbb{R}}
\newcommand{\C}{\mathbb{C}}
\newcommand{\e}{\varepsilon}
\newcommand{\ep}{\varepsilon}
\renewcommand{\phi}{\varphi}
\newcommand{\T}{\mathbb{T}}
\def\VMOA{\mathord{\rm VMOA}}
\def\BRV{\mathord{\rm BRV}}
       \def\b{\beta}        
\def\d{\delta}           \def\e{\varepsilon}
     \def\om{\omega}      
       \def\t{\theta}       
                  \def\z{\zeta}
                  \def\vp{\varphi}
\def\R{{\mathcal R}}
\def\VMOA{\mathord{\rm VMOA}}
\def\BMOA{\mathord{\rm BMOA}}
\renewcommand{\H}{\mathcal{H}}
\newenvironment{Prf}{\noindent{\emph{Proof of}}}
{\hfill$\Box$ }
\begin{document}

\title[Integral operators mapping into $H^\infty$]{Integral operators mapping into the space of bounded analytic functions}

\keywords{integral operators, space of bounded analytic functions}

\thanks{This research was supported in part by
by Ministerio de Econom\'{\i}a y Competitivivad, Spain,  and the
European Union (FEDER) projects MTM2014-52865-P, MTM2011-26538, MTM2015-63699-P, and
MTM2012-37436-C02-01; La Junta de Andaluc{\'i}a, FQM133, FQM210 and
P09-FQM-4468; Academy of Finland project no. 268009, and the Faculty
of Science and Forestry of University of Eastern Finland project no.
930349.}

\date{\today}


\begin{abstract}
We address the problem of studying the boundedness, compactness and
weak compactness of the integral operators $T_g(f)(z)=\int_0^z
f(\z)g'(\z)\,d\z$ acting from a Banach space $X$ into $H^\infty$. We
obtain a collection of general results which are appropriately
applied
 and mixed with specific techniques in order to solve the posed
questions to particular choices of $X$.
\end{abstract}

\author[M. D. Contreras]{Manuel D. Contreras}

\address{Camino de los Descubrimientos, s/n\\
Departamento de Matem\'{a}tica Aplicada II and IMUS\\
Universidad de Sevilla\\
Sevilla, 41092\\
Spain.}\email{contreras@us.es}

\author[J.A. Pel\'aez]{Jos\'e A. Pel\'aez}
\address{Departamento de An\'alisis Matem\'atico\\
Facultad de Ciencias\\
29071, M\'alaga\\
Spain. } \email{japelaez@uma.es}

\author[Ch. Pommerenke]{Christian Pommerenke}
\address{Institut f\"{u}r Mathematik\\
Technische Universit\"{a}t\\
D-10623, Berlin\\
Germany. }
\email{pommeren@math.tu-berlin.de}

\author[J. R\"atty\"a]{Jouni R\"atty\"a}
\address{University of Eastern Finland, P. O. Box 111, 80101 Joensuu, Finland}
\email{jouni.rattya@uef.fi}

\maketitle

\tableofcontents

\section{Introduction}

Let $\H(\D)$ denote the space of analytic functions in the unit disc
$\D=\{z:|z|<1\}$, and let $\T$ stand for the boundary of $\D$. The
boundedness and compactness of the integral operators
    $$
    T_g(f)(z)=\int_0^z f(\zeta)g'(\zeta)\,d\zeta,\quad g\in\H(\D),
    $$
have been   extensively studied on different spaces of analytic
functions  since the seminal works by Aleman, Cima, Siskakis, and
the third author of this paper~\cite{AC,AS,Pom}. However, it seems
that little is known about $T_g$ mapping a Banach space $X\subset
\H(\D)$ into the space~$H^\infty$ of bounded analytic functions in
$\D$. As for this question, a good number of results were proved in
\cite{AJS} when $X=H^\infty$. Moreover, a classical result attributed to
Privalov~\cite{Privalov} shows that the Volterra operator
$V(f)(z)=\int_0^z f(\z)\,d\z$ is bounded from the Hardy space $H^1$
to the disc algebra $A$, the space of those $f\in\H(\D)$ which are
continuous on $\overline{\D}$. The main objective of this paper is to study the boundedness,
compactness and weak compactness of $T_g$ acting from $X$ into
$H^\infty$. To begin with, we will work in a wide framework
providing abstract approaches to these questions. Later on, these
general results will be applied to particular choices of $X$.

In order to explain  the first of our methods  some definitions are
needed. Let $\{\beta_n\}_{n=0}^\infty$ be a sequence of positive
numbers such that
$\lim_{n\to\infty}\sqrt[n]{\beta_n}= 1$, and
let $H(\beta)$ be the Hilbert space of analytic functions in $\D$
induced by the $H(\beta)$-pairing
    \begin{equation*}
    \langle f,g\rangle_{H(\beta)}=\lim_{r\to
    1^-}\sum_{n=0}^\infty\widehat{f}(n)\overline{\widehat{g}(n)}\beta_{n}r^{n},
    \end{equation*}
where $f(z)=\sum_{n=0}^\infty\widehat{f}(n)z^n$ and $g(z)=\sum_{n=0}^\infty\widehat{g}(n)z^n$. For $f\in \mathcal{H}(\D)$ and $0<r<1$,
define $f_r(z)=f(rz)$ for all $z\in\D$. Throughout the paper a Banach space
$X\subset\H(\D)$ is called admissible if it satisfies the following
conditions:
\begin{enumerate}
\item[(P1)] The disk algebra $A$ is contained in $X$;
\item[(P2)] If $f\in X$, then $f_r\in X$ and $\sup_{0<r<1}\|f_r\|_X\lesssim\|f\|_X$;
\item[(P3)] The point evaluation functional $\d_z$, $\delta_z(f)=f(z)$, belongs to $X^*$ for all $z\in\D$.
\end{enumerate}

\begin{theorem}\label{Thm-main-Xintro} 
Let $g\in\H(\D)$.
\begin{itemize}
\item[(i)]
Let $X,Y\subset\H(\D)$ be Banach spaces such that
$X^*\simeq Y$ via the $H(\beta)$-pairing. Then $T_g:X\to
H^\infty$ is bounded if and only if
$\sup_{z\in\D}\left\|G^{H(\beta)}_{g,z}\right\|_{Y}<\infty$, where
    \begin{equation*}
    \qquad\qquad\overline{G^{H(\beta)}_{g,z}(w)}=\int_{0}^zg'(\z)\overline{
    K^{H(\beta)}_{\z}(w)}\,d\z
    = \overline{T^*_g(K^{H(\beta)}_z)(w)}, \quad z\in\D,\quad
    w\in\overline{\D},
    \end{equation*}
and $K^{H(\beta)}_z$ denotes the reproducing kernel of the Hilbert space $H(\beta)$.
\item[(ii)] If $X$ is reflexive, admissible and $H^\infty\subset X$, then $T_g:\,X
\to H^\infty$ is bounded if and only if $T_g:\,X \to A$ is bounded.
\end{itemize}
\end{theorem}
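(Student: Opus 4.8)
The reverse implication is immediate: $A$ sits isometrically as a norm-closed subspace of $H^\infty$, so if $T_g\colon X\to A$ is bounded then so is $T_g\colon X\to H^\infty$. For the forward implication I would first note that, by the closed graph theorem together with (P3) (point evaluations are continuous on both spaces and separate analytic functions), the inclusion $H^\infty\hookrightarrow X$ is continuous; hence $T_g\colon H^\infty\to H^\infty$ is bounded. Since $A$ is isometrically embedded and norm-closed in $H^\infty$, once I show $T_g(X)\subseteq A$ the boundedness of $T_g\colon X\to A$ is automatic, with the same norm. Thus the whole statement reduces to proving $T_g(X)\subseteq A$.

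The analytic core, and the step I expect to be the main obstacle, is the model case $T_g(H^\infty)\subseteq A$, i.e.\ that a bounded $T_g\colon H^\infty\to H^\infty$ already maps into the disc algebra. This is exactly the situation $X=H^\infty$ treated in \cite{AJS}, which I would invoke; the mechanism that makes it available here is reflexivity. Since $X$ is reflexive its closed unit ball is weakly compact, so the continuous inclusion $H^\infty\hookrightarrow X$ is a weakly compact operator, whence $T_g\colon H^\infty\to H^\infty$ is weakly compact. It is this compactness that upgrades the mere Bloch-type bound $\sup_{a}(1-|a|^2)|g'(a)|<\infty$ (obtained for free by testing on $f_a(z)=(1-|a|^2)/(1-\overline a z)$ and applying the Schwarz--Pick estimate) to the boundary regularity needed to land in $A$. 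If one prefers a more hands-on route, once $g\in A$ is known one may integrate by parts to get $T_g h=hg-h(0)g(0)-V(h'g)$, where $V$ is the Volterra operator: for a polynomial $h$ the term $hg$ lies in $A$ because $g\in A$, while $V(h'g)\in A$ by Privalov's theorem \cite{Privalov} since $h'g\in H^\infty\subseteq H^1$; density of polynomials in $A$ together with continuity of $T_g$ then gives $T_g(A)\subseteq A$, and dilation upgrades this to $T_g(H^\infty)\subseteq A$.

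With $T_g(H^\infty)\subseteq A$ in hand, the passage to all of $X$ is the soft part, and this is where reflexivity enters a second time. Fix $f\in X$ and a sequence $r_n\uparrow 1$. By (P2) the dilations $f_{r_n}$ form a bounded sequence in $X$, and each $f_{r_n}$ is analytic across $\overline{\D}$, so $f_{r_n}\in A\subseteq H^\infty$ and hence $T_g f_{r_n}\in A$ by the model case. Reflexivity (weak sequential compactness of bounded sequences) makes $\{f_{r_n}\}$ weakly sequentially compact, and every weak cluster point $h$ satisfies $h(z)=\lim_k f(r_{n_k}z)=f(z)$ for each $z$ by (P3), so $h=f$; thus $f_{r_n}\rightharpoonup f$ in $X$. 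As $T_g\colon X\to H^\infty$ is bounded, hence weak-to-weak continuous, $T_g f_{r_n}\rightharpoonup T_g f$ in $H^\infty$. Finally $A$ is convex and norm-closed in $H^\infty$, so it is weakly closed, and $T_g f_{r_n}\in A$ forces $T_g f\in A$. This yields $T_g(X)\subseteq A$ and completes the argument. The one genuinely hard ingredient is the model mapping property $T_g(H^\infty)\subseteq A$; everything surrounding it is routine functional analysis resting on the admissibility axioms.
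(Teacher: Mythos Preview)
Your outline for part~(ii) has the right architecture and coincides with what the paper does (Corollary~\ref{le:XAinfty} together with Proposition~\ref{rang}): reflexivity of $X$ makes the inclusion $H^\infty\hookrightarrow X$ weakly compact, hence $T_g\colon H^\infty\to H^\infty$ is weakly compact; from this one extracts $g\in A$; then the dilations $f_r\in A$ together with weak closedness of $A$ in $H^\infty$ give $T_g(X)\subseteq A$. The closed-graph justification for continuity of $H^\infty\hookrightarrow X$ and the final dilation paragraph are both fine and match the paper's Proposition~\ref{rang}.

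There are, however, two genuine gaps. First, you do not address part~(i) at all. It is short---expand $K^{H(\beta)}_\zeta(w)=\sum_n(\overline\zeta w)^n/\beta_n$ and rewrite $T_g(f)(z)$ as $\langle f,G^{H(\beta)}_{g,z}\rangle_{H(\beta)}$, so that boundedness amounts to $\sup_z\|G^{H(\beta)}_{g,z}\|_Y<\infty$---but it is half of the statement.

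Second, and more seriously, the crucial implication ``$T_g\colon H^\infty\to H^\infty$ weakly compact $\Longrightarrow g\in A$'' is \emph{not} in \cite{AJS}. That reference proves only the compact case \cite[Theorem~3.5]{AJS}, and your phrase ``a bounded $T_g\colon H^\infty\to H^\infty$ already maps into the disc algebra'' is literally false: \cite[Proposition~2.12]{AJS} constructs $g\notin A$ with $T_g$ bounded on $H^\infty$, so $T_g(1)=g-g(0)\notin A$. The weakly compact upgrade is precisely Theorem~\ref{weakly_compact_imply_g_in_A} of the present paper, and its proof is where the real work lies: it uses Bourgain's result that $H^\infty$ has the Dunford--Pettis property \cite{Bourgain2} and Lef\`evre's criterion \cite{Lefevre}, fed through an iterated rotation argument to force $\|g-g^\delta\|_{H^\infty}\to0$. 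You correctly locate the obstacle but then black-box it with a citation that does not contain the needed result; your ``hands-on route'' presupposes $g\in A$ and so does not fill this gap either.
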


The proof of Theorem~\ref{Thm-main-Xintro}(i) is simple and consists on rewriting $T_g(f)(z)$ appropriately. However, more effort is required to prove Theorem~\ref{Thm-main-Xintro}(ii) and we need to go through a previous study of the weak compactness of
integral operators acting on $H^\infty$. If  $X$ is separable and
$H^\infty\subset X$, we shall  prove that $g\in A$ whenever
$T_g:X\to H^\infty$ is bounded.
Since there exists $g\notin A$ such that $T_g:H^\infty\to
H^\infty$ is bounded \cite[Proposition~2.12]{AJS},
neither the
separability in the previous statement nor the reflexivity in
Theorem~\ref{Thm-main-Xintro}(ii) can be removed from the hypotheses.

The following result, whose
second part says that Privalov's theorem is in a sense sharp, is proved by
choosing $H(\beta)$ as the Hardy space $H^2$ in
Theorem~\ref{Thm-main-Xintro}.

\begin{theorem}\label{co:intro1} Let $g\in\H(\D)$.
\begin{itemize}
\item[\rm(i)] The following conditions are equivalent:
\begin{enumerate}
\item[\rm(a)] $T_g:H^\infty\to H^\infty$ is bounded;
\item[\rm(b)] $T_g:A\to H^\infty$ is bounded;
\item[\rm(c)] $\displaystyle \sup_{z\in\D}\left\|G^{H^2}_{g,z}\right\|_{\K}<\infty$.
\end{enumerate}
\item[\rm(ii)] $T_g:\mathcal{K}_a\to H^\infty$ is bounded if and only if $g$ is constant.
\end{itemize}
\end{theorem}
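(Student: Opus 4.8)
The two parts are essentially independent. For part~(i) the plan is to read the equivalence (b)$\Leftrightarrow$(c) straight off Theorem~\ref{Thm-main-Xintro}(i): choosing $H(\beta)=H^2$ (so that $K^{H^2}_z(w)=(1-\bar z w)^{-1}$) and $X=A$, I would invoke the classical identification $A^*\simeq\K$ via the $H^2$-pairing, where $\K$ is the space of Cauchy transforms. Theorem~\ref{Thm-main-Xintro}(i) then gives at once that $T_g\colon A\to H^\infty$ is bounded precisely when $\sup_{z\in\D}\|G^{H^2}_{g,z}\|_{\K}<\infty$, i.e.\ (b)$\Leftrightarrow$(c). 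The implication (a)$\Rightarrow$(b) is immediate since $A\subset H^\infty$ isometrically, so only (b)$\Rightarrow$(a) requires work. There I would use the dilations $f_r(z)=f(rz)$: for $f\in H^\infty$ each $f_r$ lies in $A$ with $\|f_r\|_A=\|f_r\|_\infty\le\|f\|_\infty$, and for fixed $z\in\D$ dominated convergence on the compact segment $[0,z]$ (using $|f(r\zeta)|\le\|f\|_\infty$ and the integrability of $g'$ on $[0,z]$) gives $T_g(f_r)(z)\to T_g(f)(z)$ as $r\to1^-$. Hence $|T_g(f)(z)|\le\|T_g\|_{A\to H^\infty}\|f\|_\infty$ for every $z$, and taking the supremum over $z$ yields (a).

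For part~(ii) the converse is trivial, since $g$ constant makes $T_g\equiv0$; the content is that boundedness forces $g$ constant. The plan is to test $T_g$ on the normalized kernels $k_\lambda(z)=(1-\bar\lambda z)^{-a}$, $\lambda\in\T$, which lie in the unit ball of $\mathcal{K}_a$, so that $\|T_g k_\lambda\|_\infty\le C$ uniformly in $\lambda$. Using $(T_g k_\lambda)'(z)=g'(z)(1-\bar\lambda z)^{-a}$ together with the Schwarz--Pick estimate $(1-|z|^2)|(T_g k_\lambda)'(z)|\le\|T_g k_\lambda\|_\infty$, I obtain $|g'(z)|\le C|1-\bar\lambda z|^a/(1-|z|^2)$; minimizing the right side over $\lambda\in\T$ (the minimum $1-|z|$ is attained at $\lambda=z/|z|$) gives the pointwise bound $|g'(z)|\le C(1-|z|)^{a-1}$. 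When $a>1$ this already forces $g'\equiv0$ by the maximum principle, because $\sup_{|z|\le r}|g'|=\sup_{|z|=r}|g'|\le C(1-r)^{a-1}\to0$ as $r\to1^-$.

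The genuinely hard case is the endpoint $a=1$, which is exactly the one making Privalov's theorem sharp (there $H^1\subset\mathcal{K}_1$). Now the growth bound only yields $g'\in H^\infty$, so I would extract the conclusion from boundary behaviour. Evaluating $T_g k_\lambda$ along the radius towards $\lambda$ gives $\big|\int_0^r g'(t\lambda)(1-t)^{-1}\,dt\big|\le C$ for all $r<1$ and all $\lambda\in\T$. Since $g'\in H^\infty$ has nontangential limits $g^{*}(\lambda)$ for a.e.\ $\lambda$, if $g^{*}\neq0$ on a set of positive measure then for such $\lambda$ the integrand is close to $g^{*}(\lambda)(1-t)^{-1}$ near $t=1$; pairing the integral with $\overline{g^{*}(\lambda)}$ and taking real parts produces a lower bound of order $|g^{*}(\lambda)|^{2}\log\frac{1}{1-r}\to\infty$, contradicting the uniform bound. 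Hence $g^{*}=0$ a.e., and an $H^\infty$ function with vanishing boundary values is identically $0$, so $g'\equiv0$ and $g$ is constant. The main obstacle is precisely this endpoint step: turning mere boundedness of a logarithmically weighted radial integral into pointwise vanishing of the boundary values, which forces one to combine the Schwarz--Pick regularity $g'\in H^\infty$ with a phase-aware divergence estimate rather than a crude modulus bound.
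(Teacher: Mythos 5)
Your part (i) is correct and is essentially the paper's own argument: (b)$\Leftrightarrow$(c) comes from Theorem~\ref{Thm-main-X} with $X=A$, $Y=\K$ and the duality $A^*\simeq\K$ via the $H^2$-pairing, while (b)$\Rightarrow$(a) is the dilation argument of Proposition~\ref{bd-from-X=from-polynomials} (note $A=\overline{\mathcal{P}}^{H^\infty}$), giving even $\|T_g\|_{H^\infty\to H^\infty}=\|T_g\|_{A\to H^\infty}$.

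Part (ii), however, rests on a misreading of the space. In this paper $\K_a$ is \emph{not} a one-parameter family indexed by an exponent $a$: the subscript stands for ``absolutely continuous'', and $\K_a$ is the fixed space of Cauchy transforms $K\mu$ with $d\mu=h\,|d\xi|$, $h\in L^1(\T)$, sitting inside $\K=\K_a+\K_s$. So your discussion of the cases $a>1$ versus the ``endpoint $a=1$'' is vacuous, and, more seriously, your test functions are not in the domain: $k_\lambda(z)=(1-\overline{\lambda}z)^{-1}$ is the Cauchy transform of the point mass $\delta_\lambda$, a singular measure, and since $\delta_\lambda$ cannot differ from an absolutely continuous measure by an element of $\overline{H^1_0}$, one has $k_\lambda\in\K\setminus\K_a$. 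As written, your argument only tests $T_g$ on functions outside $\K_a$, hence proves at best the weaker statement about $T_g:\K\to H^\infty$. The gap is repairable: replace $\delta_\lambda$ by normalized indicators $\frac{1}{|I_\e|}\chi_{I_\e}$ of shrinking arcs centred at $\lambda$; their Cauchy transforms lie in the unit ball of $\K_a$ and converge to $k_\lambda$ uniformly on compact subsets of $\D$, so $\|T_gk_\lambda\|_{H^\infty}\le\|T_g\|_{\K_a\to H^\infty}$ follows by pointwise convergence of $T_g$ along such sequences. Granting that, your subsequent analysis (Schwarz--Pick giving $g'\in H^\infty$, then the phase-aware estimate showing $\int_0^r g'(t\lambda)(1-t)^{-1}\,dt$ diverges logarithmically wherever the boundary values of $g'$ are nonzero) is sound and is a genuinely different, more function-theoretic route than the paper's, which instead invokes $\K_a^*\simeq H^\infty$ via the $H^2$-pairing and exhibits the same logarithmic divergence through a Taylor-coefficient computation of $\sup_{z\in\D}\|G^{H^2}_{g,z}\|_{H^\infty}$. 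But until the test functions are placed correctly in $\K_a$ (or approximated from it), the proof of (ii) does not establish the stated result.
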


Here $\K$ denotes the space of the Cauchy
transforms and $\K_a$ stands for the Cauchy transforms induced by absolutely
continuous measures.

The election of $H(\beta)$ in Theorem~\ref{Thm-main-Xintro}  as the Bergman space $A^2_\om$ induced by a
regular weight (see section~\ref{sec:bergman} for the definition) is
a key  tool in order to prove the next result.

\begin{theorem}\label{Thm:g-is-constantintro}
 If $1<p<\infty$ and $\om$ is a regular weight, then the following conditions are
equivalent:
\begin{itemize}
\item[\rm(a)] $T_g:A^p_\om\to H^\infty$ is bounded if and only if $g$ is constant;
\item[\rm(b)] $T_g:A^p_\om\to A$ is bounded if and only if $g$ is constant;
\item[\rm(c)] $\displaystyle \int_0^1\frac{dt}{\left[(1-t)\om(t)\right]^{\frac1{p-1}}}=\infty$.
\end{itemize}
In particular, if $\alpha\geq p-2$ and $T_g:A^p_\alpha\to H^\infty$
is bounded, then $g$ is constant.
\end{theorem}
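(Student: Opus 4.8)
The plan is to apply Theorem~\ref{Thm-main-Xintro}(i) with the Hilbert space $H(\beta)$ taken to be the Bergman space $A^2_\om$. Since $\om$ is regular, its moments $\beta_n=2\int_0^1 r^{2n+1}\om(r)\,dr$ satisfy $\lim_n\sqrt[n]{\beta_n}=1$, and, as recorded in Section~\ref{sec:bergman}, $(A^p_\om)^*\simeq A^{p'}_\om$ via the $A^2_\om$-pairing, where $p'=p/(p-1)$. Writing $K_\z$ for the reproducing kernel of $A^2_\om$ and $G_{g,z}$ for the associated representing function, and using $\overline{K_\z(w)}=K_w(\z)$, Theorem~\ref{Thm-main-Xintro}(i) reduces the boundedness of $T_g\colon A^p_\om\to H^\infty$ to
\begin{equation*}
\sup_{z\in\D}\|G_{g,z}\|_{A^{p'}_\om}<\infty,\qquad
\overline{G_{g,z}(w)}=\int_0^z g'(\z)K_w(\z)\,d\z .
\end{equation*}
I would then prove the cycle (c)$\Rightarrow$(a)$\Rightarrow$(b)$\Rightarrow$(c). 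The step (a)$\Rightarrow$(b) is immediate: assuming (a), if $T_g\colon A^p_\om\to A$ is bounded then so is $T_g\colon A^p_\om\to H^\infty$ (as $A\subset H^\infty$ isometrically), whence $g$ is constant; the converse direction in (b) is trivial since $T_g\equiv0$ for constant $g$.

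The analytic core is the two-sided estimate, valid for regular $\om$ and $z=|z|\xi$ with $\xi\in\T$,
\begin{equation*}
\|G_{g,z}\|_{A^{p'}_\om}^{p'}\asymp 1+\int_0^{|z|}\frac{|g'(\rho\xi)|^{p'}}{\bigl[(1-\rho)\om(\rho)\bigr]^{1/(p-1)}}\,d\rho .
\end{equation*}
I would derive this from the displayed formula for $\overline{G_{g,z}(w)}$ and the asymptotics of $K_\z$ for regular weights. Parametrising $\z=s\xi$, one checks that for $w$ in a Carleson sector based at $\xi$ the kernel $K_w(\cdot\,\xi)$ concentrates near the radius $s\approx|w|$, with radial integral comparable to $[(1-|w|)\om(|w|)]^{-1}$, so that $|G_{g,z}(w)|\asymp|g'(|w|\xi)|[(1-|w|)\om(|w|)]^{-1}$ there. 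Integrating $|G_{g,z}(w)|^{p'}\om(w)$ over such $w$, the sector at radius $\rho$ contributes area $\asymp(1-\rho)\,d\rho$ and weight $\om(\rho)$; since $1-p'=-1/(p-1)$, the product $[(1-\rho)\om(\rho)]^{-p'}\cdot\om(\rho)\cdot(1-\rho)$ collapses to $[(1-\rho)\om(\rho)]^{-1/(p-1)}$, giving the stated exponent (for $g(z)=z$ this matches the sharp growth obtained by a direct Forelli--Rudin computation for $\om(z)=(1-|z|^2)^\alpha$, a useful consistency check). For (c)$\Rightarrow$(a) I need only the lower bound for general $g$; for (b)$\Rightarrow$(c) only the upper bound for $g(z)=z$.

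Granting the estimate, (c)$\Rightarrow$(a) proceeds as follows. If $T_g\colon A^p_\om\to H^\infty$ is bounded, the lower bound gives $\|T_g\|^{p'}\gtrsim\int_0^{r}|g'(\rho\xi)|^{p'}\bigl[(1-\rho)\om(\rho)\bigr]^{-1/(p-1)}\,d\rho$ for every $\xi\in\T$ and $r<1$. Integrating in $\xi$ over $\T$ and applying Fubini replaces $|g'(\rho\xi)|^{p'}$ by the integral mean $M_{p'}(\rho,g')^{p'}=\frac1{2\pi}\int_0^{2\pi}|g'(\rho e^{i\t})|^{p'}\,d\t$, which is non-decreasing in $\rho$ by the monotonicity of integral means of analytic functions. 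If $g$ were non-constant then $M_{p'}(\rho_0,g')>0$ for some $\rho_0$, and monotonicity together with (c) would force the right-hand side to diverge as $r\to1^-$, contradicting boundedness; hence $g$ is constant. For (b)$\Rightarrow$(c) I argue by contraposition: if (c) fails, the upper bound applied to $g(z)=z$ yields $\sup_z\|G_{V,z}\|_{A^{p'}_\om}<\infty$, so the Volterra operator $V=T_g$ maps $A^p_\om$ boundedly into $H^\infty$; since polynomials are dense in $A^p_\om$, map into $A$ under $V$, and $A$ is closed in $H^\infty$, boundedness upgrades $V$ to a map into $A$. Thus a non-constant $g$ with $T_g\colon A^p_\om\to A$ bounded exists, so (b) fails. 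The ``in particular'' assertion follows by taking $\om(z)=(1-|z|^2)^\alpha$, for which $(1-t)\om(t)\asymp(1-t)^{\alpha+1}$ and the integral in (c) diverges precisely when $\alpha\ge p-2$.

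The main obstacle is the two-sided estimate for $\|G_{g,z}\|_{A^{p'}_\om}$. Crude bounds are inadequate: the Cauchy estimate applied to $(T_gf)'=fg'$ only yields the pointwise growth $|g'(z)|\lesssim[(1-|z|)^2\om(|z|)]^{1/p}(1-|z|^2)^{-1}$, which is far too weak to force $g$ constant, while the Minkowski/pointwise upper bound produces the wrong exponent $[(1-\rho)^2\om(\rho)]^{-1/p}$ in place of $[(1-\rho)\om(\rho)]^{-1/(p-1)}$. Obtaining the correct exponent requires genuinely exploiting the full $A^{p'}_\om$-norm rather than pointwise size, i.e. the kernel-concentration analysis above; the possible oscillation of $g'$ over the concentration scale is precisely what the angular integration over $\xi$ and the monotonicity of $M_{p'}(\rho,g')$ are designed to absorb. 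This is where the regularity of $\om$ and the sharp estimates for $K_\z$ established in Section~\ref{sec:bergman} are indispensable.
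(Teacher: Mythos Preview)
Your overall strategy via Theorem~\ref{Thm-main-Xintro}(i) and the duality $(A^p_\om)^*\simeq A^{p'}_\om$ is correct, and the implications (a)$\Rightarrow$(b) and not-(c)$\Rightarrow$not-(b) are fine; the paper handles these similarly, invoking Corollary~\ref{le:XAinfty} for (a)$\Leftrightarrow$(b) and \cite[Theorem~4(ii)]{PR2015} for the exact asymptotic of $\|G^{A^2_\om}_{g,z}\|_{A^{p'}_\om}$ when $g(z)=z$.

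The gap is in the lower bound needed for (c)$\Rightarrow$(a). Your claimed estimate $|G_{g,z}(w)|\asymp|g'(|w|\xi)|[(1-|w|)\om(|w|)]^{-1}$ for $w$ in a Carleson sector at $\xi=z/|z|$ is not justified and is in fact incorrect as a general principle. Already for the standard kernel $K_w(s\xi)=(1-s\overline{w}\xi)^{-2}$ with $w=|w|\xi$, the map $s\mapsto(1-s|w|)^{-2}$ is \emph{monotone increasing} on $[0,1)$, not concentrated near $s\approx|w|$; its mass sits on the whole interval $\{s:1-s\lesssim 1-|w|\}$, over which $g'(s\xi)$ need not be close to $g'(|w|\xi)$. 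Nothing in your argument rules out cancellation in $\int_0^z g'(\z)K_w(\z)\,d\z$ coming from radial oscillation of $g'$. The angular averaging over $\xi$ and the monotonicity of $M_{p'}(\rho,g')$ that you invoke are applied \emph{after} the pointwise lower bound is assumed; they absorb angular oscillation of $|g'|$ but cannot repair a radial cancellation in the defining integral, so they do not help establish the bound. Equivalently, in the dual picture you would need, for each $z=r\xi$, an analytic $f$ with $\|f\|_{A^p_\om}\le1$ and $|T_g(f)(z)|^{p'}\gtrsim\int_0^r|g'(\rho\xi)|^{p'}[(1-\rho)\om(\rho)]^{-1/(p-1)}\,d\rho$; there is no reason such an $f$ should exist, since the natural candidate involves $|g'|^{p'-2}\overline{g'}$, which is not analytic.

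The paper takes a completely different route for (c)$\Rightarrow$(a). It expands $G^{A^2_\om}_{g,z}(w)$ as a power series in $\overline w$, applies the block decomposition $\|F\|^{p'}_{A^{p'}_\om}\asymp\sum_n 2^{-n}\|\Delta^\om_n F\|^{p'}_{H^{p'}}$ (Theorem~\ref{th:dec}) together with multiplier lemmas for the blocks, replaces $\sup_z$ by an average over $\arg z$, and then projects onto a single nonzero Taylor coefficient $\widehat g(N+1)$ via the M.~Riesz projection. This produces a lower bound depending only on $|\widehat g(N+1)|$ times a series $\sum_j 2^{-j}(\om_{2M_j+1})^{-p'}M_{j+1}^{-1}$, which is shown to diverge exactly when (c) holds. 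The method bypasses any pointwise control of $|G_{g,z}(w)|$ and is insensitive to oscillation of $g'$.
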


Let us observe that (a) and (b) are equivalent by Theorem~\ref{Thm-main-Xintro} and \cite[Corollary~7]{PelRatproj}.
The proof of (a)$\Leftrightarrow $(c) in
Theorem~\ref{Thm:g-is-constantintro} is based on a decomposition
technique on blocks which gives an equivalent norm in $A^p_\om$~\cite[Theorem~4]{PelRathg},
and on precise $L^p$-estimates of the reproducing kernels of the Dirichlet Hilbert space $\mathcal{D}_\om$~\cite[Theorem~4(ii)]{PR2015} induced by regular weights.

In fact, in the proof of the above theorem, it can be seen that if (c) is not satisfied, then $T_g:A^p_\om\to A$ is bounded for any polynomial $g$.

The condition $\sup_{z\in\D}\left\|G^{H(\beta)}_{g,z}\right\|_{Y}<\infty$ in
Theorem~\ref{Thm-main-Xintro}(i) might seem obscure, but it
turns out to be useful in praxis, for example in
Theorem~\ref{co:intro1}(ii) and
Theorem~\ref{Thm:g-is-constantintro}.

As for the question of the boundedness, we provide a second general approach that is closely related to bounded surjective projections, a topic intimately related to duality. Theorem~\ref{boundedness_Lp} below
allows us to describe those $g\in\H(\D)$ such that $T_g:X\to H^\infty$ is bounded when $X$ is an
$L^p$-quotient, in particular, when $X$ is the Bloch space or $\BMOA$.
We will also prove that the Hankel operator $H_{\bar{g}}$ and
the integral operator $T_g$ are not simultaneously bounded on
$H^\infty$, although it is well-known that this happens on $H^p$ for
$1<p<\infty$~\cite{AC,Pell}.

With regard to the weak compactness, we will focus on the cases $X=H^\infty$ and $X=A$. In fact, using a
result due to Bourgain \cite{Bourgain, Bourgain2}, which says that a
bounded operator $T:H^\infty\to X$ is weakly compact if and only
if it is completely continuous, and a result due to Lef\'evre
\cite[Theorem~1.2]{Lefevre} in the same spirit, we will prove the
following result.

\begin{theorem}\label{weakly_compact_imply_g_in_A}
Let $g\in\H(\D)$. If $T_g:H^\infty\to H^\infty$ is weakly compact,
then $g\in A$.
\end{theorem}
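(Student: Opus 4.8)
The plan is to leverage the theorem of Bourgain cited just before the statement: a bounded operator $T\colon H^\infty\to X$ is weakly compact if and only if it is completely continuous (maps weakly convergent sequences to norm-convergent ones). So the strategy is to assume $T_g\colon H^\infty\to H^\infty$ is weakly compact, hence completely continuous, and then extract from complete continuity the conclusion that $g\in A$, i.e.\ that $g$ extends continuously to $\overline{\D}$.

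Let me think about how complete continuity bites. A standard source of weakly null sequences in $H^\infty$ that do not converge in norm is the sequence of normalized reproducing-kernel-type functions, or more simply the sequence $f_n(z)=z^n$. These converge weakly to $0$ in $H^\infty$ (they converge to $0$ uniformly on compact subsets of $\D$ and are uniformly bounded, which for the relevant functionals gives weak-star-to-zero; the genuinely weak statement one should verify is that $z^n\to 0$ weakly, using that $(H^\infty)^*$ functionals can be analyzed via their action on the disk algebra and the Riesz representation on the bidisk boundary). If $T_g$ is completely continuous, then $\|T_g(f_n)\|_{H^\infty}\to 0$. Now I would compute $T_g(z^n)(z)=\int_0^z \zeta^n g'(\zeta)\,d\zeta$ and try to read off information about the smoothness of $g$. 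The first key step, then, is to identify an explicit weakly null sequence in $H^\infty$ whose images under $T_g$ encode the modulus of continuity or the oscillation of $g$ near the boundary.

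The cleaner route — and the one I would pursue as the main line — is to test complete continuity against translates or "moving bump" functions that localize near a boundary point. Concretely, fix $\theta$ and consider a sequence of inner-type or peak functions $f_n$ concentrated near $e^{i\theta}$, normalized so that $\|f_n\|_{H^\infty}=1$ and $f_n\to 0$ weakly (peak functions for the disk algebra are the canonical candidates: they tend to $0$ uniformly off a shrinking neighborhood of the peak point, and one checks they are weakly null in $H^\infty$ by pairing against the predual). Complete continuity forces $\|T_g(f_n)\|_{H^\infty}\to 0$. On the other hand, $T_g(f_n)(z)=\int_0^z f_n g'$, and by choosing the peak localized so that $f_n\approx 1$ on the region contributing the boundary values of $g$ at $e^{i\theta}$, I would extract a statement of the form: the boundary function of $g$ has no jump/oscillation at $e^{i\theta}$, uniformly in $\theta$. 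Summing up such local information should yield that $g$ has continuous boundary values, i.e.\ $g\in A$. (That $g\in A$ means precisely $g\in H^\infty$ extends continuously to $\overline{\D}$; since $T_g$ bounded already implies $g\in H^\infty$ here, the content is the continuity.)

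\emph{The hard part} will be the precise extraction in the last step: turning the normwise smallness $\|T_g(f_n)\|_\infty\to 0$ into pointwise/uniform continuity of the boundary values of $g$. The difficulty is that $T_g(f_n)$ involves $g'$, which need not be bounded, so estimating the sup-norm of the integral $\int_0^z f_n g'$ requires controlling the interaction between the localization of $f_n$ and the growth of $g'$; a naive bound loses the information. I expect the correct device is to pick $f_n$ so that $f_n\cdot g'$ integrates, along a radius approaching $e^{i\theta}$, to a quantity comparable to the oscillation of $g$, perhaps via integration by parts to move the derivative off $g$. A secondary technical obstacle is verifying genuine \emph{weak} (not merely weak-star) nullity of the chosen test sequence in $H^\infty$; here the cited result of Lef\`evre \cite[Theorem~1.2]{Lefevre}, together with Bourgain's theorem, should let me work with complete continuity directly and sidestep a delicate description of $(H^\infty)^*$, so I would route the argument through those two results rather than through an explicit duality computation.
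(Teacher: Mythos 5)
There is a genuine gap --- in fact two. First, the weakly null sequences you propose to test against are not weakly null in $H^\infty$. The sequence $z^n$ converges to zero only in the weak-star topology (equivalently, for bounded sequences, uniformly on compact subsets of $\D$): any multiplicative functional $\phi$ on $H^\infty$ lying in the fibre of the maximal ideal space over the point $1$ satisfies $\phi(z^n)=\phi(z)^n=1$, so $z^n\not\to0$ weakly. The same objection rules out peak functions $f_n$ with $f_n(e^{i\theta})=1$: characters in the fibre over $e^{i\theta}$ do not vanish on them. Since complete continuity only sees genuinely weakly null sequences, Bourgain's theorem gives you nothing on these inputs, and this is not a technicality one can ``sidestep'': the whole difficulty of the theorem is precisely that weak-star null does not imply weakly null in $H^\infty$. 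Second, even granting suitable test functions, you leave the decisive step --- converting $\|T_g(f_n)\|_{H^\infty}\to0$ into continuity of $g$ up to the boundary --- entirely unexecuted, and you acknowledge as much; that step is where the proof lives.

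The paper's argument supplies exactly the missing mechanism. It tests on the rotational differences $h_n=g-g^{\delta_n}$ and $l_n=g-g^{-\delta_n}$, where $g^\delta(z)=g(e^{i\delta}z)$ and $\delta_n\to0$. These need not be weakly null, but weak compactness of $T_g$ forces $T_g(h_n)\to0$ weakly in $H^\infty$ (the limit is identified through pointwise convergence on $\D$), and complete continuity coming from Bourgain's theorem then gives $\|T_g(h_n)\|_{H^1}\to0$. The algebraic identities $T_{g^{\delta_n}}(h_n)=-T_g(l_n)\circ\rho_n$, with $\rho_n(z)=e^{i\delta_n}z$, and $T_{h_n}(h_n^k)=\tfrac{1}{k+1}h_n^{k+1}$ convert this into $\|h_n^2\|_{H^1}\to0$; Lef\`evre's theorem (which replaces ``weakly null'' by the checkable hypothesis ``bounded and $H^1$-null'') upgrades this to $\|T_g(h_n^2)\|_{H^\infty}\to0$, and one more round of the same algebra yields $\|h_n\|_{H^\infty}^3=\|h_n^3\|_{H^\infty}\to0$. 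Hence $\|g-g^\delta\|_{H^\infty}\to0$ as $\delta\to0$, which characterizes membership in $A$. The lesson, if you want to salvage your plan, is that the test functions must be built from $g$ itself, so that applying $T_g$ to them reproduces powers of the very quantity you are trying to control, and that the passage from $H^1$-smallness to $H^\infty$-smallness must be routed through Lef\`evre's criterion rather than through an explicit description of $(H^\infty)^*$.
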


Theorem~\ref{weakly_compact_imply_g_in_A} is a key in our study
and in particular it is used to prove
Theorem~\ref{Thm-main-Xintro}(ii). The next result is also obtained
as a byproduct of Theorem~\ref{weakly_compact_imply_g_in_A}.

\begin{corollary}\label{co:weakAA}
Let $g\in\H(\D)$ such that $T_g:H^\infty\to H^\infty$ is bounded.
Then the following statements are equivalent:
\begin{enumerate}
\item[(a)]$T_g:H^\infty\to H^\infty$ is weakly compact;
\item[(b)]$T_g:H^\infty\to A$ is bounded;
\item[(c)]$T_g:A\to A$ is weakly compact.
\end{enumerate}
\end{corollary}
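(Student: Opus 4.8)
The plan is to prove the three statements equivalent through the cycle (b) $\Rightarrow$ (a) $\Rightarrow$ (c) $\Rightarrow$ (b), keeping the boundedness of $T_g\colon H^\infty\to H^\infty$ as a standing hypothesis. Two preliminary observations will be used. First, if $g\in A$ then $T_g(A)\subseteq A$: integration by parts gives $T_g(p)=pg-p(0)g(0)-V(p'g)$ for every polynomial $p$, and since $pg\in A$, $p'g\in A\subset H^\infty$, and $V$ maps $H^\infty\subset H^1$ into $A$ by Privalov's theorem~\cite{Privalov}, each $T_g(p)$ lies in $A$; as the polynomials are dense in $A$, the space $A$ is closed in $H^\infty$, and $T_g$ is bounded, this yields $T_g(A)\subseteq A$. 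Second, $A$ is separable and the point evaluations $\delta_z$, $z\in\D$, belong to $A^*$, so weak convergence in $A$ implies pointwise convergence on $\D$.

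For (b) $\Rightarrow$ (a) the decisive ingredient is Bourgain's theorem that $H^\infty$ is a Grothendieck space~\cite{Bourgain}: since $A$ is separable, every bounded operator from the Grothendieck space $H^\infty$ into $A$ is automatically weakly compact. Indeed, a bounded sequence in $A^*$ has a weak-star convergent subsequence by the separability of $A$, its image under $T_g^*$ converges weak-star in $(H^\infty)^*$ (adjoints are weak-star continuous) and hence weakly by the Grothendieck property, so $T_g^*$, and therefore $T_g$, is weakly compact. Composing the weakly compact operator $T_g\colon H^\infty\to A$ with the bounded inclusion $A\hookrightarrow H^\infty$ shows that $T_g\colon H^\infty\to H^\infty$ is weakly compact, which is (a).

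For (a) $\Rightarrow$ (c), assume $T_g\colon H^\infty\to H^\infty$ is weakly compact. Theorem~\ref{weakly_compact_imply_g_in_A} gives $g\in A$, so $T_g(A)\subseteq A$ by the first preliminary. The restriction $T_g|_A\colon A\to H^\infty$ is weakly compact, being the restriction of a weakly compact operator to a subspace; and since its range lies in the closed, hence weakly closed, subspace $A$, on which the weak topology is the one inherited from $H^\infty$, the operator $T_g\colon A\to A$ is weakly compact, i.e. (c).

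For (c) $\Rightarrow$ (b), fix $f\in H^\infty$ and consider the dilations $f_r$, $0<r<1$, which lie in $A$ with $\|f_r\|_\infty\le\|f\|_\infty$. As $T_g\colon A\to A$ is weakly compact, $\{T_gf_r\}$ is relatively weakly compact in $A$, so by the Eberlein--\v{S}mulian theorem every sequence $r_n\to1^-$ admits a subsequence with $T_gf_{r_{n_k}}\to F$ weakly in $A$ for some $F\in A$. Weak convergence forces $T_gf_{r_{n_k}}(z)\to F(z)$ for each $z\in\D$, while $T_gf_{r_{n_k}}(z)=\int_0^z f_{r_{n_k}}g'\to\int_0^z fg'=T_gf(z)$ because $f_{r_{n_k}}\to f$ uniformly on $[0,z]$; hence $F=T_gf$ on $\D$ and $T_gf=F\in A$. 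As $f\in H^\infty$ was arbitrary, $T_g(H^\infty)\subseteq A$, and with the standing boundedness of $T_g$ on $H^\infty$ this is exactly (b). The main obstacle lies here: local uniform convergence of the dilations is far too weak to transport the operator by mere continuity, so the weak compactness in (c) must be used to extract a genuine weak limit in $A$ that is only afterwards identified with $T_gf$ through point evaluations. The conceptual weight of the whole argument, however, rests on Bourgain's Grothendieck theorem, which renders the otherwise delicate implication (b) $\Rightarrow$ (a) essentially automatic.
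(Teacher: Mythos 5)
Your proof is correct, and for the key implication it takes a genuinely different route from the paper. For (b)$\Rightarrow$(a) you argue exactly as the paper does (its Theorem~\ref{operators_on_Hinfty1}, i.e.\ Bourgain's Grothendieck property of $H^\infty$ plus separability of $A$). The divergence is in the remaining implications. For (a)$\Rightarrow$(c) the paper invokes Proposition~\ref{rang}, which gets $T_g(A)\subseteq A$ from weak compactness via Mazur's theorem, whereas you derive $T_g(A)\subseteq A$ for any $g\in A$ with $T_g$ bounded on $H^\infty$ by the elementary integration-by-parts identity $T_g(p)=pg-p(0)g(0)-V(p'g)$ and Privalov's theorem; this is more self-contained and isolates the purely algebraic fact from the functional-analytic one. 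For (c)$\Rightarrow$(b) the paper uses Gantmacher's theorem to get $T_g^{**}:A^{**}\to A$ bounded, the decomposition $A^{**}=H^\infty\oplus\K_s^*$ from \eqref{eq:A^**}, and Lemma~\ref{biadjunto} (itself resting on Goldstine's theorem and the description of the weak-star topology) to identify $T_g^{**}$ with $T_g$ on $H^\infty$; you instead run a direct dilation argument, extracting via Eberlein--\v{S}mulian a weak limit in $A$ of $\{T_g(f_r)\}$ and identifying it with $T_g(f)$ through point evaluations. Your route avoids the bidual machinery entirely and is arguably more transparent; the paper's route buys the reusable Lemma~\ref{biadjunto} and the structural statement $T_g^{**}=T_g$ on $H^\infty$, which it needs again in the compactness section (e.g.\ in the proof of Theorem~\ref{compactness_Hinfty}). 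Both arguments are complete and correct.
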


As for the compactness, an application of
Theorem~\ref{weakly_compact_imply_g_in_A} together with the use of
the $H(\beta)$-pairing provides the following result.

\begin{theorem}\label{th:compactH2pairing} Let $g\in\H(\D)$ and $X\subset\H(\D)$ admissible such that $H^\infty\subset X$.
\begin{enumerate}
\item[\rm(i)] Then $T_g:X\to H^\infty$ is compact if and only if $T_g:X\to A$ is compact.
\item[\rm(ii)] If $Y\subset\H(\D)$ satisfies $Y\simeq X^*$ via the
$H(\beta)$-pairing and for every bounded sequence in $X$, there is a
subsequence which converges uniformly on compact subsets of $\D$ to
an element of $X$, then $T_g:X\to A$ is compact if and only
if $\{T_g^*(K^{H(\beta)}_z):z\in \D\}=
\{G^{H(\b)}_{g,z}:z\in \D\} $ is relatively compact in $Y$.
\end{enumerate}
\end{theorem}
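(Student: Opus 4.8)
The plan is to prove (i) by reducing the $H^\infty$-valued problem to the $A$-valued one, and to deduce (ii) from (i) together with Schauder's theorem on adjoints of compact operators. Throughout I use the identity $T_g(f)(z)=\langle f,G^{H(\b)}_{g,z}\rangle_{H(\beta)}$, which is exactly what the rewriting in the proof of Theorem~\ref{Thm-main-Xintro}(i) yields, and the fact that, since $X$ is admissible and $H^\infty\subset X$, the inclusion $H^\infty\hookrightarrow X$ is bounded: its graph is closed because convergence in $H^\infty$ and convergence in $X$ both force pointwise convergence, by (P3).

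For (i), the implication ``$T_g:X\to A$ compact $\Rightarrow$ $T_g:X\to H^\infty$ compact'' is immediate because $A$ is a closed subspace of $H^\infty$ carrying the same norm. For the converse, the heart of the matter, and the step I expect to be the main obstacle, is to show that $T_g(X)\subseteq A$; once this is known, the image of the unit ball of $X$ is relatively compact in $H^\infty$ and contained in the closed subspace $A$, hence relatively compact in $A$, so $T_g:X\to A$ is compact. To prove $T_g(X)\subseteq A$ I would first observe that compactness of $T_g:X\to H^\infty$ makes $T_g:H^\infty\to H^\infty$ compact (compose with the bounded inclusion), hence weakly compact, so $g\in A$ by Theorem~\ref{weakly_compact_imply_g_in_A}. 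Then, for $f\in X$ and $0<r<1$, the dilation $f_r$ is analytic on $\overline{\D}$, and integration by parts gives $T_g(f_r)=f_r\,g-f_r(0)g(0)-V(f_r'\,g)$, where $V$ is the Volterra operator. Here $f_r g\in A$ since $A$ is an algebra and $g\in A$, while $f_r'\,g\in H^\infty\subset H^1$, so Privalov's theorem gives $V(f_r'\,g)\in A$; thus $T_g(f_r)\in A$.

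To pass from $f_r$ to $f$, recall that $\{f_r\}_{0<r<1}$ is bounded in $X$ by (P2); since $T_g:X\to H^\infty$ is compact, any sequence $T_g(f_{r_n})$ with $r_n\to1$ has a subsequence converging in $H^\infty$ to some $h$. As $f_{r_n}\to f$ uniformly on compact subsets of $\D$, we also have $T_g(f_{r_n})\to T_g(f)$ uniformly on compacta, forcing $h=T_g(f)$. Since each $T_g(f_{r_n})\in A$ and $A$ is closed in $H^\infty$, we conclude $T_g(f)\in A$, which finishes (i).

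For (ii), the implication ``$T_g:X\to A$ compact $\Rightarrow\{G^{H(\b)}_{g,z}\}$ relatively compact'' is quick: by Schauder's theorem the adjoint $T_g^*:A^*\to X^*$ is compact, the point evaluations $\{\delta_z:z\in\D\}$ form a bounded subset of $A^*$ (indeed $\|\delta_z\|_{A^*}=1$), and under the identification $X^*\simeq Y$ the functional $T_g^*(\delta_z)$ corresponds to $G^{H(\b)}_{g,z}=T_g^*(K^{H(\beta)}_z)$; hence $\{G^{H(\b)}_{g,z}:z\in\D\}$ is relatively compact in $Y$. For the converse, assume this family is relatively compact, so in particular $\sup_z\|G^{H(\b)}_{g,z}\|_Y<\infty$ and $T_g:X\to H^\infty$ is bounded by Theorem~\ref{Thm-main-Xintro}(i). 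Given a bounded sequence $\{f_n\}$ in $X$, the normal-family hypothesis lets me extract a subsequence converging uniformly on compacta to some $f\in X$; writing $h_n=f_n-f$ (a bounded sequence in $X$ with $h_n\to0$ uniformly on compacta), it suffices to show $\|T_g(h_n)\|_{H^\infty}\to0$. Fix $\e>0$ and choose a finite $\e$-net $\{G^{H(\b)}_{g,z_1},\dots,G^{H(\b)}_{g,z_m}\}$ for the relatively compact family. For each $z$ pick $z_j$ with $\|G^{H(\b)}_{g,z}-G^{H(\b)}_{g,z_j}\|_Y<\e$; then, using the boundedness of the $H(\beta)$-pairing in the duality $X^*\simeq Y$,
\begin{equation*}
|T_g(h_n)(z)|=\left|\langle h_n,G^{H(\b)}_{g,z}\rangle_{H(\beta)}\right|\le |T_g(h_n)(z_j)|+C\,\|h_n\|_X\,\e.
\end{equation*}
Since $T_g(h_n)(z_j)=\int_0^{z_j}h_n(\z)g'(\z)\,d\z\to0$ for each of the finitely many points $z_j$ (because $h_n\to0$ uniformly on the compact segment $[0,z_j]$ and $g'$ is bounded there), taking the supremum over $z$ and then $n\to\infty$ yields $\limsup_n\|T_g(h_n)\|_{H^\infty}\lesssim\e$. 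As $\e$ is arbitrary, $T_g(f_n)\to T_g(f)$ in $H^\infty$, so $T_g:X\to H^\infty$ is compact; part (i) then upgrades this to compactness of $T_g:X\to A$, completing the proof.
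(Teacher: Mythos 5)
Your proof is correct, and its skeleton matches the paper's: both directions of (i) hinge on composing with the bounded inclusion $H^\infty\hookrightarrow X$ to get $T_g:H^\infty\to H^\infty$ weakly compact, invoking Theorem~\ref{weakly_compact_imply_g_in_A} to conclude $g\in A$, and then showing $T_g(X)\subseteq A$ so that compactness into $H^\infty$ and into $A$ coincide; for (ii) both use Schauder's theorem plus the identification $T_g^*(\delta_z)\leftrightarrow G^{H(\beta)}_{g,z}$ in one direction and a normal-families compactness criterion in the other. Where you diverge is in how the auxiliary facts are established. For $T_g(X)\subseteq A$ the paper cites Proposition~\ref{rang}, whose proof asserts $T_g(f_r)\in A$ and then passes to the limit via Mazur's theorem and weak closedness of $A$; you instead justify $T_g(f_r)\in A$ explicitly by integration by parts and Privalov's theorem (a detail the paper leaves implicit) and exploit the \emph{norm} compactness at hand to pass to the limit, which is both more elementary and more self-contained. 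For the converse in (ii) the paper's Lemma~\ref{le:compactA} argues by contradiction, extracting points $z_n$ with $|T(f_n)(z_n)|\ge\e$ and a convergent subsequence of $T^*(K^{H(\beta)}_{z_n})$; your direct $\e$-net estimate $|T_g(h_n)(z)|\le|T_g(h_n)(z_j)|+C\|h_n\|_X\e$ is logically equivalent but cleaner, and routing the conclusion through ``$T_g:X\to H^\infty$ compact'' plus part (i) neatly sidesteps the fact that Lemma~\ref{le:compactA} formally presupposes $T_g:X\to A$ bounded. In short: same architecture, with your version trading the paper's two cited lemmas for short direct arguments.
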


We shall use Theorem~\ref{th:compactH2pairing}(i) to
prove the following result, which is in stark contrast with the fact
that there is a $g\in \H(\D)$ such that $T_g:H^\infty\to H^\infty$
is bounded but $T_g$ is unbounded acting on $A$.

\begin{theorem}\label{compactness_Hinfty} Let $g\in\H(\D)$. Then the following statements are equivalent:
\begin{enumerate}
\item[(i)] $T_g:H^\infty\to H^\infty$ is compact;
\item[(ii)] $T_g:A\to A$ is compact;
\item[(iii)] $T_g:H^\infty\to A$ is compact;
\item[(iv)] $T_g:A\to H^\infty$ is compact;
\item[(v)] $\{G^{H^2}_{g,z}:z\in\D\}$ is relatively compact in $\K$.
\end{enumerate}
\end{theorem}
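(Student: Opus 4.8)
The plan is to treat the three conditions (i), (iii), (v)---which have $H^\infty$ as domain or are intrinsic to $g$---as the core of the argument, settling their equivalence by the general machinery already in place, and then to attach the two ``disc-algebra'' conditions (ii) and (iv) by soft functional-analytic steps. First I would record that $H^\infty$ is admissible: it contains $A$, satisfies $\sup_{0<r<1}\|f_r\|_{H^\infty}\le\|f\|_{H^\infty}$, and has bounded point evaluations; moreover $H^\infty\subset H^\infty$ trivially. Hence Theorem~\ref{th:compactH2pairing}(i) applies with $X=H^\infty$ and gives (i)$\Leftrightarrow$(iii). For (iii)$\Leftrightarrow$(v) I would apply Theorem~\ref{th:compactH2pairing}(ii) with $X=H^\infty$, $H(\beta)=H^2$ and $Y=\K$, the space of Cauchy transforms that realizes the pertinent duality through the $H^2$-pairing as in Theorem~\ref{co:intro1}. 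The normal-families hypothesis needed there holds for $H^\infty$ by Montel's theorem: a norm-bounded sequence is locally bounded, so some subsequence converges uniformly on compact subsets of $\D$ to an analytic limit which inherits the bound and therefore again lies in $H^\infty$. This establishes (i)$\Leftrightarrow$(iii)$\Leftrightarrow$(v).

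The implications (iii)$\Rightarrow$(ii)$\Rightarrow$(iv) are purely formal once $A$ is viewed as a closed, isometrically embedded subspace of $H^\infty$, with inclusion $\iota\colon A\hookrightarrow H^\infty$. If $T_g\colon H^\infty\to A$ is compact, then $T_g\circ\iota\colon A\to A$ is compact (a compact operator precomposed with a bounded one), which is (ii); and if $T_g\colon A\to A$ is compact, then $\iota\circ T_g\colon A\to H^\infty$ is compact, which is (iv). Thus only (iv)$\Rightarrow$(v) is left in order to close the chain, since it yields (iv)$\Rightarrow$(v)$\Rightarrow$(iii)$\Leftrightarrow$(i) while (iii)$\Rightarrow$(ii)$\Rightarrow$(iv) runs the other way.

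For (iv)$\Rightarrow$(v) the device is duality. Assuming $T_g\colon A\to H^\infty$ compact, it is in particular bounded, so its adjoint $T_g^*\colon (H^\infty)^*\to A^*$ is, by Schauder's theorem, compact. Through the $H^2$-pairing the reproducing kernel $K^{H^2}_z$ is the point-evaluation functional $\delta_z$, and $\{\delta_z:z\in\D\}$ is bounded in $(H^\infty)^*$ because $\|\delta_z\|=1$ for every $z\in\D$. Since a compact operator maps bounded sets to relatively compact ones, $\{T_g^*(K^{H^2}_z):z\in\D\}$ is relatively compact in $A^*$. The identity $T_g^*(K^{H^2}_z)(f)=\delta_z(T_g f)=T_g(f)(z)=\langle f,G^{H^2}_{g,z}\rangle_{H^2}$ identifies $T_g^*(K^{H^2}_z)$ with $G^{H^2}_{g,z}$ under the isometric identification of $A^*$ with $\K$ given by the $H^2$-pairing; as $\K$ is complete, relative compactness in $A^*$ passes to relative compactness in $\K$, which is exactly (v).

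I expect the step (iv)$\Rightarrow$(v) to be the crux. The reason is that the disc algebra lies outside the scope of Theorem~\ref{th:compactH2pairing}: not only is $H^\infty\not\subset A$, but $A$ fails the very normal-families property used there, since a norm-bounded sequence in $A$ may converge uniformly on compacta to a function in $H^\infty\setminus A$. One therefore cannot simply rerun that theorem with $X=A$. The adjoint argument sidesteps this by exploiting an asymmetry: the point evaluations are uniformly bounded precisely in $(H^\infty)^*$, while their images $G^{H^2}_{g,z}$ under $T_g^*$ land in the well-understood space $\K\simeq A^*$, so Schauder's theorem delivers the required relative compactness in $\K$ without any compactness hypothesis internal to $A$.
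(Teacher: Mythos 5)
Your handling of (i)$\Leftrightarrow$(iii), of the formal implications (iii)$\Rightarrow$(ii)$\Rightarrow$(iv), and of (iv)$\Rightarrow$(v) via Schauder's theorem is correct; the last of these is in fact a slightly cleaner route than the paper's, which instead proves (iv)$\Rightarrow$(i) through the bidual (using $A^{**}\simeq H^\infty\oplus\K_s^*$ and Lemma~\ref{biadjunto}) and then gets (i)$\Rightarrow$(v) by factoring $T_g$ through its preadjoint on $\K_a$. The problem is the step you rely on to close the cycle. You invoke Theorem~\ref{th:compactH2pairing}(ii) with $X=H^\infty$ and $Y=\K$, but that theorem requires $Y\simeq X^*$ via the $H^2$-pairing, and $\K$ is the dual of $A$, not of $H^\infty$: the relevant dualities are $A^*\simeq\K$ and $\K_a^*\simeq H^\infty$, so $H^\infty$ is itself a dual space and $(H^\infty)^*$ is not isomorphic to any space of analytic functions via this pairing (it is nonseparable and strictly larger than $\K$). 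Theorem~\ref{co:intro1} uses $\K$ only because boundedness of $T_g:H^\infty\to H^\infty$ is first reduced to boundedness of $T_g:A\to H^\infty$. Consequently your claimed equivalence (iii)$\Leftrightarrow$(v) is unjustified, and since (v)$\Rightarrow$(iii) was the only arrow returning to (i), your chain of implications does not close: you are left with (i)$\Leftrightarrow$(iii)$\Rightarrow$(ii)$\Rightarrow$(iv)$\Rightarrow$(v) and no converse.

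What is missing is a direct proof of (v)$\Rightarrow$(i), and this is precisely where the paper works by hand rather than by citing Theorem~\ref{th:compactH2pairing}(ii). First, (v) gives $\sup_{z\in\D}\|G^{H^2}_{g,z}\|_{\K}<\infty$, so $T_g:H^\infty\to H^\infty$ is bounded by Theorem~\ref{Thm:main-1}(ii). Then one assumes $T_g$ is not compact, produces via Lemma~\ref{le:weakdesc} a unit-norm sequence $\{f_n\}$ in $H^\infty$ tending to $0$ on compacta with $|T_g(f_n)(z_n)|\ge\e$, and extracts from $\{G^{H^2}_{g,z_{n_k}}\}$ a sequence converging in $\K$ to some $h$. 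The key point that replaces the unavailable duality $(H^\infty)^*\simeq\K$ is that each $G^{H^2}_{g,z}$ extends analytically across $\T$ and hence lies in $\K_a$, so the limit $h$ lies in $\K_a$ and the pairing estimate $|T_g(f)(z_{n_k})-\langle f,h\rangle_{H^2}|\lesssim\|f\|_{H^\infty}\|G^{H^2}_{g,z_{n_k}}-h\|_{\K}$ is legitimate for all $f\in H^\infty$ by the duality $\K_a^*\simeq H^\infty$; the contradiction then follows exactly as in the proof of Lemma~\ref{le:compactA}. Your adjoint argument for (iv)$\Rightarrow$(v) is worth keeping, but without this supplementary argument the theorem is not proved.
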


\medskip\par Furthermore, we will combine
Theorem~\ref{th:compactH2pairing} with Littlewood-Paley formulas for
the $H^2$-paring and the $A^2_\om$-pairing in order to obtain
concrete descriptions of the $g\in \mathcal H(\D)$ such that $T_g: X\to
H^\infty$ is compact. In particular, we will prove the following
result.

\begin{theorem}\label{th:BMOAcompactintro}
Let $g\in\H(\D)$.
\begin{itemize}
\item[\rm(a)] Each polynomial $g$ induces a bounded operator $T_g:
H^1\to H^\infty$ but only constant functions induce compact
integral operators from $H^1$ to $H^\infty$.
\item[\rm(b)] If $T_g:\BMOA\to H^\infty$ is bounded, then the following statements
are equivalent:
\begin{itemize}
\item[(i)] $T_g:\BMOA\to H^\infty$ is compact;
\item[(ii)] $T_g:\VMOA\to A$ is compact;
\item[(iii)] $T_g:\BMOA\to A$ is compact;
\item[(iv)] $T_g:\VMOA\to H^\infty$ is compact;
\item[(v)] $\{ G^{H^2}_{g,z}:z\in \D\}$ is a relatively compact in $H^1$;
\item[(vi)] $
    \displaystyle\lim_{R\to1^-}\sup_{z\in\D}
    \int_{\T}\left(\int_{\Gamma(\xi)\setminus D(0,R)}\left|(G^{H^2}_{g,z})'(w)\right|^2\,dA(w)\right)^{\frac{1}{2}}\,|d\xi|=0,
    $
    where
$$\Gamma(\xi)=\left\{\z\in\D:|\t-\arg\z|<\frac12\left(1-|\z|\right)\right\}$$ is the  lens-type region with vertex at $\xi=e^{i\t}$.
\end{itemize}

\end{itemize}
\end{theorem}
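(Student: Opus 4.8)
The plan is to treat the two parts separately, leaning on Theorem~\ref{th:compactH2pairing} and on the duality realized by the $H^2$-pairing. For part~(a), boundedness of $T_g:H^1\to H^\infty$ for a polynomial $g$ reduces to Privalov's theorem: writing $g'(\z)=\sum_{k=0}^{N}a_k\z^k$ one has $T_g(f)=\sum_k a_k V(m_kf)$, where $V=T_{z}$ is the Volterra operator and $m_k(\z)=\z^k$ acts isometrically on $H^1$; since $V:H^1\to A$ is bounded, so is $T_g$, with image in $A\subset H^\infty$. For the non-compactness I would test against the normalized kernels $h_a(\z)=(1-|a|^2)/(1-\overline{a}\z)^2$, which satisfy $\|h_a\|_{H^1}\le 1$ and tend to $0$ uniformly on compact subsets of $\D$ as $|a|\to1$. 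Because $H^1$ is closed under uniformly-on-compacta limits of norm-bounded sequences, a Tjani-type lemma shows that compactness of $T_g:H^1\to H^\infty$ would force $\|T_gh_a\|_{H^\infty}\to0$. Taking $a=r\xi$ with $\xi\in\T$ and integrating along the radius, the weight $(1-r^2)(1-rt)^{-2}$ is an approximate identity of total mass $1+r$ concentrating at $t=1$, so $T_gh_{r\xi}(\xi)\to 2\,\xi\,g'(\xi)$; if $g$ is non-constant we fix $\xi$ with $g'(\xi)\neq0$ and obtain $\limsup_{r\to1}\|T_gh_{r\xi}\|_{H^\infty}>0$, a contradiction.

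For part~(b) the first observation is that $\BMOA$ is admissible and $H^\infty\subset\BMOA$, so Theorem~\ref{th:compactH2pairing}(i) applied with $X=\BMOA$ yields (i)$\Leftrightarrow$(iii). The inclusions $A\hookrightarrow H^\infty$ and $\VMOA\hookrightarrow\BMOA$ give for free the implications (iii)$\Rightarrow$(ii)$\Rightarrow$(iv) and (i)$\Rightarrow$(iv), so it remains to close the cycle, which I would do through condition~(v). The defining identity $T_gF(z)=\langle F,G^{H^2}_{g,z}\rangle_{H^2}$ shows $G^{H^2}_{g,z}=T_g^{*}(K^{H^2}_z)$; since the point evaluations $K^{H^2}_z$ form a bounded subset of $A^{*}$ and of $(H^\infty)^{*}$, compactness of $T_g:\VMOA\to A$ (resp.\ $T_g:\VMOA\to H^\infty$) makes the adjoint compact and sends this bounded set to a relatively compact subset of $\VMOA^{*}\simeq H^1$. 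This gives (ii)$\Rightarrow$(v) and (iv)$\Rightarrow$(v).

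The heart of the matter is (v)$\Rightarrow$(i). Assuming $\{G^{H^2}_{g,z}:z\in\D\}$ relatively compact in $H^1$, I would prove compactness of $T_g:\BMOA\to H^\infty$ via the Tjani-type lemma, which is legitimate because $\BMOA=(H^1)^{*}$ is closed under bounded uniformly-on-compacta limits. Thus it suffices to show $\|T_gF_n\|_{H^\infty}=\sup_{z}|\langle F_n,G^{H^2}_{g,z}\rangle_{H^2}|\to0$ whenever $F_n$ is bounded in $\BMOA$ and $F_n\to0$ uniformly on compacta. Such an $F_n$ converges to $0$ in $\sigma(\BMOA,H^1)$, and a bounded weak-$*$ null sequence converges to $0$ uniformly on norm-compact subsets of the predual $H^1$; applying this to the relatively compact family $\{G^{H^2}_{g,z}\}$ yields the required decay, uniform in $z$. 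Together with the earlier implications this closes (i)--(v). I expect the main obstacle to be exactly this interplay between the non-reflexive pair $\VMOA\subset\BMOA$ and the need for uniformity in $z$: the gain is to route everything through the relatively compact family $\{G^{H^2}_{g,z}\}\subset H^1$, rather than trying to apply Theorem~\ref{th:compactH2pairing}(ii) to $\VMOA$, which fails the normal-families hypothesis (bounded sequences in $\VMOA$ may converge uniformly on compacta to functions in $\BMOA\setminus\VMOA$).

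Finally, (v)$\Leftrightarrow$(vi) is the concrete Littlewood--Paley description. From $T_g(\mathbf 1)=g-g(0)\in H^\infty$ one sees $g\in H^\infty$, so the scalars $G^{H^2}_{g,z}(0)=\overline{g(z)-g(0)}$ stay bounded and are irrelevant to compactness; relative compactness of $\{G^{H^2}_{g,z}\}$ in $H^1$ is then governed, through the tent-space area-function norm $\|h\|_{H^1}\asymp|h(0)|+\int_\T(\int_{\Gamma(\xi)}|h'|^2\,dA)^{1/2}|d\xi|$, by the uniform smallness of the tails over $\Gamma(\xi)\setminus D(0,R)$ as $R\to1^-$, which is precisely~(vi). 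I anticipate the endpoint verification that ``uniformly small tails'' is equivalent to relative compactness in this $L^1$-type setting to be the most technical, routine-but-delicate step.
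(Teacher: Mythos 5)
Part (a) contains a genuine gap in the non-compactness half. The claim is that for an \emph{arbitrary} $g\in\H(\D)$ --- not merely a polynomial --- compactness of $T_g:H^1\to H^\infty$ forces $g$ to be constant (this is Theorem~\ref{th:BMOAcompact}(ii)). Your computation $T_gh_{r\xi}(\xi)\to 2\xi g'(\xi)$ presupposes that $g'$ is integrable along the radius $[0,\xi]$ (so that the boundary value of $T_gh_{r\xi}$ at $\xi$ is given by the radial integral) and that $g'$ has a continuous, or at least radial, nonzero limit at some $\xi\in\T$. For a general symbol none of this is available: compactness gives $g\in A$ (via $T_g(\mathbf 1)$ and Theorem~\ref{th:compactH2pairing}), but no control whatsoever on the boundary behaviour of $g'$. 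So your test-function argument settles the statement for polynomials (and for symbols whose derivative extends continuously to $\T$ without vanishing identically there), but not in the stated generality. The paper avoids boundary values entirely: from compactness it extracts relative compactness of $\{G^{H^2}_{g,z}:z\in\D\}$ in $\BMOA$, converts this into the vanishing Carleson-type condition \eqref{Eq:condition-ugly}, and contradicts that condition for every non-constant $g$ by a coefficient computation built around an index $N$ with $\widehat{g}(N+1)\ne0$. Some argument at the level of Taylor coefficients or of the family $\{G^{H^2}_{g,z}\}$, rather than of radial limits of $g'$, is needed to close your proof. The boundedness half of (a), via Privalov's theorem and the decomposition $T_g(f)=\sum_k a_kV(m_kf)$, is correct and more elementary than the paper's route through $\log(1-z)\in\BMOA$ and Lemma~\ref{polynomials}.

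Part (b) is correct and, for the closing implications, genuinely different from the paper. The paper proves (vi)$\Rightarrow$(i) with the Coifman--Meyer--Stein stopping-time inequality \eqref{Eq:stopping-time} and (iv)$\Rightarrow$(i) with a bi-adjoint argument through $\VMOA^{**}\simeq\BMOA$ (Lemma~\ref{biadjunto_general}); you instead prove (v)$\Rightarrow$(i) directly, observing that a bounded sequence in $\BMOA$ tending to $0$ uniformly on compacta is $\sigma(\BMOA,H^1)$-null and hence tends to $0$ uniformly on the norm-compact closure of $\{G^{H^2}_{g,z}\}$ in $H^1$, which gives the required uniformity in $z$. Your observation that Theorem~\ref{th:compactH2pairing}(ii) cannot be applied to $\VMOA$ (the normal-families hypothesis fails) is accurate, and the detour through $\BMOA=(H^1)^*$ is the right fix. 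The remaining step (vi)$\Leftrightarrow$(v), which you flag as delicate, does go through: one direction is the $\varepsilon$-net argument the paper also uses, and the converse follows from normal families plus Fatou's lemma and Minkowski's inequality applied to the area function in \eqref{Eq:H^p-norm}, using that $\sup_z\|G^{H^2}_{g,z}\|_{H^1}<\infty$ under the standing boundedness hypothesis. The net effect of your route is that the tent-space machinery is avoided altogether, at the price of proving this intrinsic $H^1$-compactness criterion yourself.
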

We shall use the theory of tent spaces introduced by Coifman, Meyer
and Stein \cite{CMS} in the proof of
Theorem~\ref{th:BMOAcompactintro}(b). In particular, a clever stopping-time argument \cite[(4.3)]{CMS} will be employed to prove
that (vi) implies (i).

Finally, let us recall that each $g\in H^\infty$ (resp. $g\in A$) with
bounded radial variation induces a bounded integral operator $T_g$
on $H^\infty$ (resp. on $A$). However, as far as we know, the
reverses are open problems. We will show that each $g\in A$ with bounded radial variation induces a
bounded integral operator from $H^\infty$ into the disc algebra, see Proposition~\ref{pr:BRVA} below.

The rest of the paper is organized as follows.
Section~\ref{section:boundedness} is devoted to the study of
boundedness, and the equivalence (a)$\Leftrightarrow$(c) of
Theorem~\ref{Thm:g-is-constantintro} is proved there.
Theorem~\ref{Thm-main-Xintro}(i) coincides with
Theorem~\ref{Thm-main-X}, and Theorem~\ref{co:intro1} is contained
in Theorem~\ref{Thm:main-1}. In
Section~\ref{section:weakcompactness} we prove
Theorem~\ref{weakly_compact_imply_g_in_A}, Corollary~\ref{co:weakAA}
and Corollary~\ref{le:XAinfty}, which contains
Theorem~\ref{Thm-main-Xintro}(ii). Section~\ref{section:compactness}
contains the proofs of Theorems~\ref{th:compactH2pairing},
\ref{compactness_Hinfty}, and \ref{th:BMOAcompactintro}.
Section~\ref{section:comments} contains a further discussion about
some of our results and in particular (see
Theorem~\ref{H2Hinftypositive} below) shows that condition
$\sup_{z\in\D}\left\|G^{H^2}_{g,z}\right\|_{H^2}<\infty$ can be
neatly rewritten when $g$ has nonnegative Taylor coefficients.

\section{Bounded integral operators mapping into
$H^\infty$}\label{section:boundedness}

\subsection{General results}

We begin with introducing some notation and a couple of results
which will be repeatedly used throughout the paper. Let $T(X,H^\infty)$ denote the set of $g\in\mathcal{H}(\D)$ such that $T_g:X\to H^\infty$ is
bounded, and let $T_c(X,H^\infty)$ denote the symbols $g$ for which $T_g:X\to H^\infty$ is compact. For $f\in\H(\D)$ and $0<r<1$, let
    \begin{equation*}
    \begin{split}
    M_p(r,f)&=\left(\frac{1}{2\pi}\int_{0}^{2\pi} |f(re^{i\t})|^p\,d\t\right)^{1/p},\quad
    0<p<\infty,\\
    M_\infty(r,f)&=\max_{|z|=r}|f(z)|.
    \end{split}
    \end{equation*}
For $0<p\le \infty $, the Hardy space $H^p$  consists of $f\in
\H(\mathbb D)$ such that $\|f\|_{H^p}=
\sup_{0<r<1}M_p(r,f)<\infty$. The proof of the following known result is standard and hence omitted.

\begin{letterlemma}\label{polynomials} Let $X\subset\H(\D)$ be an admissible Banach space. Then the following assertions hold:
\begin{enumerate}
\item $T(X,H^\infty)$ endowed with the norm $\|g\|_*=\|T_g\|_{X\to H^\infty}+|g(0)|$ is a Banach space.
\item If the multiplication operator $M_z:X\to X$ is bounded and $T_z:X\to H^\infty$
is bounded, then $T(X,H^\infty)$ contains all polynomials. If in addition $T_z$ is compact, then $T_c(X,H^\infty)$ contains all polynomial.
\end{enumerate}
\end{letterlemma}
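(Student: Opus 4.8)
For part (1), the plan is first to verify that $\|\cdot\|_*$ is a genuine norm and then to establish completeness. Linearity of $g\mapsto T_g$ is immediate, and since $T_g$ depends only on $g'$ one has $T_g=0$ precisely when $g$ is constant; the extra term $|g(0)|$ therefore upgrades the operator seminorm to a norm, because $\|g\|_*=0$ forces $g$ constant with $g(0)=0$, i.e. $g\equiv 0$. (That $T(X,H^\infty)$ is a linear space is clear from $T_{g_1+g_2}=T_{g_1}+T_{g_2}$.) For completeness I would take a sequence $\{g_n\}$ that is Cauchy in $\|\cdot\|_*$. Then $\{T_{g_n}\}$ is Cauchy in the Banach space $\B(X,H^\infty)$ and converges to some bounded operator $S$, while $\{g_n(0)\}$ converges to some $a\in\C$. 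The task is to produce an analytic $g$ with $g(0)=a$ and $T_g=S$, and to check $g_n\to g$ in $\|\cdot\|_*$.

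The key step, and the only place where admissibility enters, is the recovery of the symbol from the limit operator. By property (P1) the constant function $1$ lies in $A\subset X$, and $T_g(1)(z)=\int_0^z g'(\z)\,d\z=g(z)-g(0)$. Applying this to each $g_n$ and using $T_{g_n}\to S$ in operator norm (so that $T_{g_n}(1)\to S(1)$ uniformly on $\D$), together with $g_n(0)\to a$, gives $g_n\to g:=S(1)+a$ uniformly on $\D$; in particular $g$ is analytic and $g(0)=a$. To identify $S$ with $T_g$, I would fix $f\in X$ and $z\in\D$ and pass to the limit in $T_{g_n}(f)(z)=\int_0^z f(\z)g_n'(\z)\,d\z$. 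Uniform convergence $g_n\to g$ on $\D$ yields, by the Weierstrass/Cauchy-estimate theorem, locally uniform convergence $g_n'\to g'$; since $f$ is bounded on the segment $[0,z]$, the integrals converge, so $T_{g_n}(f)(z)\to T_g(f)(z)$. Comparing this with $T_{g_n}(f)\to S(f)$ pointwise gives $T_g(f)=S(f)$ for every $f\in X$. Hence $g\in T(X,H^\infty)$ with $T_g=S$, and $\|g_n-g\|_*\to0$. I expect this interchange of limit and integration, and the passage from uniform control of $g_n$ to control of $g_n'$, to be the main (though routine) technical point.

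For part (2), the plan rests on the factorisation
\[
T_{z^{k+1}}=(k+1)\,T_z\circ M_z^k,\qquad k\ge0,
\]
which follows by differentiating $z^{k+1}$ inside the integral: $T_{z^{k+1}}(f)(z)=(k+1)\int_0^z \z^k f(\z)\,d\z=(k+1)T_z(M_z^k f)(z)$. Since $M_z$ is bounded on $X$, so is $M_z^k$, and as $T_z:X\to H^\infty$ is bounded, each $T_{z^{k+1}}$ is bounded, being a composition of bounded operators; constant symbols give the zero operator. Because $T(X,H^\infty)$ is a linear space, it therefore contains every polynomial. If in addition $T_z$ is compact, then $T_{z^{k+1}}=(k+1)T_zM_z^k$ is compact as a compact operator composed with a bounded one, so $T_c(X,H^\infty)$ contains every monomial, and being a linear space it contains all polynomials.
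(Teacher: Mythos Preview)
Your proof is correct. The paper does not actually give a proof of this lemma: it states that ``the proof of the following known result is standard and hence omitted.'' Your argument supplies exactly the kind of standard details one would expect---using (P1) to place the constant function $1$ in $X$ so as to recover the symbol via $g_n=T_{g_n}(1)+g_n(0)$, and the factorisation $T_{z^{k+1}}=(k+1)\,T_z\circ M_z^k$ for part~(2)---and both steps are carried out cleanly.
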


Let $\overline{\mathcal{P}}^X$ denote the closure of polynomials in
$X$.

\begin{proposition}\label{bd-from-X=from-polynomials}
Let $X\subset\H(\D)$ be an admissible Banach space and $g\in\H(\D)$. Then the following statements are equivalent:
\begin{enumerate}
\item[\rm(a)] $T_g:X\to H^\infty$ is bounded;
\item[\rm(b)] $T_g:\overline{\mathcal{P}}^X\to H^\infty$ is bounded.
\end{enumerate}
Moreover,
    $$
    \|T_g\|_{\overline{\mathcal{P}}^X\to H^\infty}\asymp\|T_g\|_{X\to H^\infty}.
    $$
\end{proposition}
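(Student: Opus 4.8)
The plan is to show the equivalence of the two boundedness statements together with the norm comparison in a single argument.

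One direction is immediate. Since $\overline{\mathcal{P}}^X \subset X$ is a closed subspace carrying the restricted norm, if $T_g : X \to H^\infty$ is bounded then its restriction $T_g : \overline{\mathcal{P}}^X \to H^\infty$ is bounded with $\|T_g\|_{\overline{\mathcal{P}}^X \to H^\infty} \le \|T_g\|_{X \to H^\infty}$. This gives (a)$\Rightarrow$(b) and half of the norm comparison for free.

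For the reverse direction (b)$\Rightarrow$(a), the key idea is to use the dilations $f_r(z) = f(rz)$ together with property (P2). First I would record that for each fixed $0 < r < 1$ and each $f \in X$, the dilated function $f_r$ lies in the disc algebra $A$ (its Taylor series converges absolutely on $\overline{\D}$), hence $f_r \in \overline{\mathcal{P}}^X$ because polynomials converge to $f_r$ in the $A$-norm and $A \hookrightarrow X$ continuously by (P1). Thus the hypothesis applies to each $f_r$, yielding
\begin{equation*}
\|T_g(f_r)\|_{H^\infty} \le \|T_g\|_{\overline{\mathcal{P}}^X \to H^\infty}\,\|f_r\|_X \lesssim \|T_g\|_{\overline{\mathcal{P}}^X \to H^\infty}\,\|f\|_X,
\end{equation*}
where the last inequality is exactly the uniform bound $\sup_{0<r<1}\|f_r\|_X \lesssim \|f\|_X$ from (P2). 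The remaining task is to pass from a uniform bound on $\|T_g(f_r)\|_{H^\infty}$ to a bound on $\|T_g(f)\|_{H^\infty}$ itself. For this I would fix $z \in \D$ and examine the integral representation $T_g(f)(z) = \int_0^z f(\zeta) g'(\zeta)\,d\zeta$; since $f_r \to f$ uniformly on compact subsets of $\D$ as $r \to 1^-$, one has $T_g(f_r)(z) \to T_g(f)(z)$ pointwise for every $z \in \D$. Combining this pointwise convergence with the uniform bound gives $|T_g(f)(z)| = \lim_{r\to 1^-} |T_g(f_r)(z)| \le \liminf_{r\to1^-}\|T_g(f_r)\|_{H^\infty} \lesssim \|T_g\|_{\overline{\mathcal{P}}^X \to H^\infty}\,\|f\|_X$, and taking the supremum over $z \in \D$ shows $T_g(f) \in H^\infty$ with the desired norm control $\|T_g\|_{X \to H^\infty} \lesssim \|T_g\|_{\overline{\mathcal{P}}^X \to H^\infty}$.

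The main (and only mildly delicate) obstacle is verifying that the dilation $f_r$ genuinely lies in $\overline{\mathcal{P}}^X$ so that the hypothesis (b) can be invoked — this is where properties (P1) and (P2) are essential, and where one must be careful that convergence of polynomials to $f_r$ takes place in the $X$-norm and not merely uniformly on $\D$. Everything else is a routine interchange of a uniform bound with a pointwise limit, and the two one-sided norm inequalities combine to give the stated equivalence $\|T_g\|_{\overline{\mathcal{P}}^X \to H^\infty} \asymp \|T_g\|_{X \to H^\infty}$.
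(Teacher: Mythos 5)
Your proposal is correct and follows essentially the same route as the paper: the easy restriction inequality for (a)$\Rightarrow$(b), and for the converse the dilation trick $f\mapsto f_r$, noting $f_r\in\overline{\mathcal{P}}^X$ via (P1), applying (P2) for the uniform bound, and passing to the pointwise limit $T_g(f)(z)=\lim_{r\to1^-}T_g(f_r)(z)$. Your extra care in justifying $f_r\in\overline{\mathcal{P}}^X$ (absolute convergence of the Taylor series in the $A$-norm plus the continuous embedding $A\hookrightarrow X$, which follows from (P1), (P3) and the closed graph theorem) is a detail the paper leaves implicit, but the argument is the same.
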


\begin{proof}
It is clear that (a) implies (b), and $\|T_g\|_{\overline{\mathcal{P}}^X\to H^\infty}\le\|T_g\|_{X\to
H^\infty}$. Now, assume (b), and let $f\in X$. Since $f_r\in\overline{\mathcal{P}}^X$ for each $0<r<1$
by (P1), (P2) yields
    $$
    \|T_g(f_r)\|_{H^\infty}
    \le\|T_g\|_{\overline{\mathcal{P}}^X\to H^\infty}\|f_r\|_X
    \lesssim \|T_g\|_{\overline{\mathcal{P}}^X\to H^\infty}\|f\|_X,\quad 0<r<1.
    $$
Since $T_g(f)(z)=\lim_{r\to 1^-}T_g(f_r)(z)$ for all $z\in\D$, we
deduce $|T_g(f)(z)|\lesssim\|T_g\|_{\overline{\mathcal{P}}^X\to H^\infty}
\|f\|_X$. This yields (a) and $\|T_g\|_{X\to
H^\infty}\lesssim\|T_g\|_{\overline{\mathcal{P}}^X\to H^\infty}$.
\end{proof}

Let $\{\beta_n\}_{n=0}^\infty$ be a sequence of positive numbers such that $\lim_{n\to\infty}\sqrt[n]{\beta_n}= 1$. Then
$F_\beta(z)=\sum_{n=0}^\infty \frac{z^n}{\beta_n}\in\H(\D)$, and
since H\"older's inequality gives
    \begin{equation*}
    |f(z)|\le \sum_{n=0}^\infty |\widehat{f}(n)||z|^n\le
    \|f\|_{H(\beta)}F_\beta(|z|^2),\quad z\in\D,
    \end{equation*}
the norm convergence in $H(\beta)$ implies the uniform convergence in compact subsets of $\D$. In particular,
the point evaluation functionals are bounded on $H(\beta)$, and therefore the Riesz representation theorem guarantees the existence
of reproducing kernels $K^{H(\beta)}_z\in H(\b)$ such that
    \begin{equation}\label{eq:reproducingbeta}
    f(z)=\langle f,K^{H(\beta)}_z\rangle_{H(\beta)},\quad f\in
H(\beta),\quad z\in\D.
    \end{equation}
The identity $K^{H(\beta)}_z(\zeta)=
F_\beta(\zeta\overline{z})=\sum_{n=0}^\infty
\frac{(\zeta\overline{z})^n}{\beta_n}$ follows by using the standard orthonormal basis of normalized monomials.

Two normed spaces $X,Y\subset\H(\D)$ satisfy the duality relation
$X^*\simeq Y$ via the $H(\beta)$-pairing if $\langle
f,g\rangle_{H(\beta)}$ exists for all $f\in X$ and $g\in Y$, the
linear functional $L_g(f)=\langle
    f,g\rangle_{H(\beta)}$ on $X$ satisfies
    $
    C^{-1}\|g\|_{Y}\le\|L_g\|\le C\|g\|_{Y}
    $
for some constant $C>0$, and each $L\in X^*$ equals to $L_g$ for some $g\in Y$.
Therefore, if $X^*\simeq Y$ via the $H(\beta)$-pairing, the
anti-linear mapping $g\mapsto L_g$ is a bijection from $Y$ to $X^*$ and the norms of $g$ and $L_g$ are comparable.

With these preparations we can state and prove the first characterization of the bounded integral operators.

\begin{theorem}\label{Thm-main-X}
Let $g\in\H(\D)$ and $X,Y\subset\H(\D)$ Banach spaces such that
$X^*\simeq Y$ via the $H(\beta)$-pairing. Then $T_g:X\to
H^\infty$ is bounded if and only if
$\sup_{z\in\D}\left\|G^{H(\beta)}_{g,z}\right\|_{Y}<\infty$, where
    \begin{equation}\label{eq:Gbeta}
    \overline{G^{H(\beta)}_{g,z}(w)}=\int_{0}^zg'(\z)\overline{
    K^{H(\beta)}_{\z}(w)}\,d\z
    = \overline{T^*_g(K^{H(\beta)}_z)(w)}, \quad z\in\D,\quad
    w\in\overline{\D}.
    \end{equation}
Moreover,
    \begin{equation}\label{eq:normGbeta}
    \|T_g\|_{X\to
    H^\infty}\asymp\sup_{z\in\D}\left\|G^{H(\beta)}_{g,z}\right\|_{Y}.
    \end{equation}
\end{theorem}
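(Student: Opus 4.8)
The plan is to unwind the definition of the operator norm by writing $T_g(f)(z)$ as a pairing against a single element of the Hilbert space $H(\beta)$, and then transfer that pairing to the dual space $Y$. First I would fix $f\in X$ and $z\in\D$, and use the reproducing property \eqref{eq:reproducingbeta} to express the integrand: since $f(\z)=\langle f,K^{H(\beta)}_\z\rangle_{H(\beta)}$ for each $\z$, I can write
\begin{equation*}
T_g(f)(z)=\int_0^z f(\z)g'(\z)\,d\z=\int_0^z\langle f,K^{H(\beta)}_\z\rangle_{H(\beta)}\,g'(\z)\,d\z.
\end{equation*}
Because the $H(\beta)$-pairing is (conjugate-)linear and the path integral commutes with it (this is where I would want $f$ to be, say, a polynomial or in $\overline{\mathcal P}^X$ so that the interchange is justified by uniform convergence and boundedness of the kernels on compact subsets), I can pull the integral inside the pairing to obtain
\begin{equation*}
T_g(f)(z)=\left\langle f,\ \int_0^z \overline{g'(\z)}\,K^{H(\beta)}_\z\,d\z\right\rangle_{H(\beta)}=\langle f,G^{H(\beta)}_{g,z}\rangle_{H(\beta)},
\end{equation*}
where $G^{H(\beta)}_{g,z}$ is exactly the element defined by \eqref{eq:Gbeta}, namely $\overline{G^{H(\beta)}_{g,z}(w)}=\int_0^z g'(\z)\overline{K^{H(\beta)}_\z(w)}\,d\z$.

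Next I would invoke the hypothesis $X^*\simeq Y$ via the $H(\beta)$-pairing. The pairing $f\mapsto\langle f,G^{H(\beta)}_{g,z}\rangle_{H(\beta)}$ is precisely the functional $L_{G^{H(\beta)}_{g,z}}$, so the duality gives the two-sided estimate
\begin{equation*}
C^{-1}\left\|G^{H(\beta)}_{g,z}\right\|_Y\le\sup_{\|f\|_X\le1}\sabs{\langle f,G^{H(\beta)}_{g,z}\rangle_{H(\beta)}}=\sup_{\|f\|_X\le1}\sabs{T_g(f)(z)}\le C\left\|G^{H(\beta)}_{g,z}\right\|_Y.
\end{equation*}
Taking the supremum over $z\in\D$ on each side, the middle quantity becomes $\sup_{\|f\|_X\le1}\sup_{z\in\D}\sabs{T_g(f)(z)}=\|T_g\|_{X\to H^\infty}$, which yields both the equivalence of boundedness with $\sup_{z\in\D}\|G^{H(\beta)}_{g,z}\|_Y<\infty$ and the norm comparison \eqref{eq:normGbeta}. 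The identification of $G^{H(\beta)}_{g,z}$ with $T_g^*(K^{H(\beta)}_z)$ in \eqref{eq:Gbeta} follows formally by the same pairing manipulation applied to the adjoint, since $\langle T_g f,\,\cdot\,\rangle$ against the kernel reproduces the value $T_g(f)(z)$.

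The main obstacle is the justification of moving the contour integral inside the $H(\beta)$-pairing, i.e.\ showing $\int_0^z\langle f,K^{H(\beta)}_\z\rangle g'(\z)\,d\z=\langle f,\int_0^z\overline{g'(\z)}K^{H(\beta)}_\z\,d\z\rangle$ and that the integral $\int_0^z\overline{g'(\z)}K^{H(\beta)}_\z\,d\z$ genuinely defines an element of $H(\beta)$ (equivalently, that $G^{H(\beta)}_{g,z}\in\H(\D)$ with the stated Taylor coefficients). For a fixed $z\in\D$ the path $[0,z]$ is a compact subset of $\D$, and on it the map $\z\mapsto K^{H(\beta)}_\z$ is continuous and bounded into $H(\beta)$ while $g'$ is continuous, so the $H(\beta)$-valued Riemann integral exists and commutes with the bounded conjugate-linear functional $\langle f,\cdot\rangle$; alternatively one verifies the coefficient identity $\widehat{G^{H(\beta)}_{g,z}}(n)=\beta_n^{-1}\int_0^z g'(\z)\overline{\z}^{\,n}\,d\z$ directly by expanding $K^{H(\beta)}_\z(w)=\sum_n\beta_n^{-1}(\z\overline w)^n$ and integrating termwise, the termwise interchange being legitimate by uniform convergence on $[0,z]$. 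To keep the pairing-interchange fully rigorous I would first establish the identity for $f$ in a dense class, namely polynomials (or $f\in\overline{\mathcal P}^X$), and then pass to general $f\in X$ using Proposition~\ref{bd-from-X=from-polynomials}, which reduces boundedness of $T_g$ on $X$ to boundedness on $\overline{\mathcal P}^X$ with comparable norm; this is exactly the role that the admissibility hypotheses (P1)--(P3) play in the surrounding theory.
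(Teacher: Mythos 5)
Your proposal is correct and takes essentially the same approach as the paper: both arguments reduce everything to the identity $T_g(f)(z)=\langle f,G^{H(\beta)}_{g,z}\rangle_{H(\beta)}$ and then read off boundedness and the norm comparison from the duality $X^*\simeq Y$. The paper justifies the interchange precisely by your ``alternative'' coefficient computation (replacing $f$ by $f_r$ and integrating the Taylor series termwise over the compact path $[0,z]$), which is valid for every $f\in\H(\D)$ and so makes the density detour via Proposition~\ref{bd-from-X=from-polynomials} unnecessary --- and in fact unavailable here, since the theorem does not assume $X$ admissible; note also that your primary justification (commuting the $H(\beta)$-valued Riemann integral with $\langle f,\cdot\rangle$) tacitly requires $\langle f,\cdot\rangle$ to be continuous on $H(\beta)$, i.e.\ $f\in H(\beta)$, which is not part of the hypotheses, so the coefficientwise route is the one to keep.
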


\begin{proof}
For each $z\in\D$,
    $$
    \overline{G^{H(\beta)}_{g,z}(w)}=\int_{0}^zg'(\z)\overline{
    K^{H(\beta)}_\z(w)}\,d\z=\sum_{n=0}^\infty \left(\int_0^z
    \frac{\z^n}{\beta_n} g'(\z)\,d\z\right) \overline{w}^n,
    $$
and hence
    \begin{equation}
    \begin{split}\label{le:tgformula}
    T_g(f)(z) &=\int_0^z g'(\z)f(\z)\,d\z= \lim_{r\to 1^-}\int_0^z g'(\z)f(r\z)\,d\z\\
    &=\lim_{r\to 1^-} \int_0^z \left(\sum_{n=0}^\infty \widehat{f}(n)\z^n r^n \right)g'(\z)\,d\z\\
    &=\lim_{r\to 1^-} \sum_{n=0}^\infty \widehat{f}(n) \left(\int_0^z\frac{\z^n}{\beta_n} g'(\z)\,d\z\right)\beta_n r^n
    =\langle f,G^{H(\beta)}_{g,z}\rangle_{H(\beta)},\quad z\in\D,
    \end{split}
    \end{equation}
for all $f\in\H(\D)$. Therefore $T_g:X\to H^\infty$ is bounded if
and only if each $G^{H(\beta)}_{g,z}$ induces via
$\cdot\mapsto\langle\cdot,G^{H(\beta)}_{g,z}\rangle_{H(\beta)}$ a bounded
linear functional in $X$ with norm uniformly bounded in~$z$. Thus
$T_g:X\to H^\infty$ is bounded if and only if $\sup_{z\in\D}\left\|G^{H(\beta)}_{g,z}\right\|_{Y}<\infty$, and further, \eqref{eq:normGbeta} is satisfied. The second
equality in \eqref{eq:Gbeta} follows by \eqref{eq:reproducingbeta} and \eqref{le:tgformula}:
$$\langle T_g(f),K^{H(\beta)}_{z}\rangle_{H(\beta)}=T_g(f)(z)  =\langle f,G^{H(\beta)}_{g,z}\rangle_{H(\beta)},\quad z\in\D.
$$
Ergo, $T_g^*(K^{H(\beta)}_{z})=G^{H(\beta)}_{g,z}$ for all $z\in \D$.
\end{proof}

By using Proposition~\ref{bd-from-X=from-polynomials} and
Theorem~\ref{Thm-main-X} we obtain the following result.

\begin{proposition}\label{Thm-main-XA}
Let $X\subset\H(\D)$ be admissible and $Y\subset\H(\D)$ such that $X^*\simeq Y$
via the $H(\beta)$-pairing. Then the following statements are
equivalent:
\begin{enumerate}
\item[\rm(a)] $T_g:\overline{\mathcal{P}}^X\to A$ is bounded;
\item[\rm(b)] $g\in A$ and $\sup_{z\in\D}\left\|G^{H(\beta)}_{g,z}\right\|_{Y}<\infty$.
\end{enumerate}
\end{proposition}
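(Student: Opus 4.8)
The plan is to combine the two results the excerpt has already established and insert the single extra ingredient that distinguishes mapping into $A$ from mapping into $H^\infty$, namely that the image actually lies in the disc algebra. First I would record that by Proposition~\ref{bd-from-X=from-polynomials}, boundedness of $T_g$ from $X$ is equivalent to boundedness of $T_g$ from $\overline{\mathcal{P}}^X$, and by Theorem~\ref{Thm-main-X} the latter (into $H^\infty$) is equivalent to $\sup_{z\in\D}\|G^{H(\beta)}_{g,z}\|_Y<\infty$. So the content of Proposition~\ref{Thm-main-XA} is precisely that, \emph{for the subspace} $\overline{\mathcal{P}}^X$, mapping into $A$ rather than merely into $H^\infty$ adds exactly the requirement $g\in A$.

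The implication (a)$\Rightarrow$(b) I would handle directly. Assume $T_g:\overline{\mathcal{P}}^X\to A$ is bounded. Then in particular $T_g:\overline{\mathcal{P}}^X\to H^\infty$ is bounded, so Theorem~\ref{Thm-main-X} gives $\sup_{z\in\D}\|G^{H(\beta)}_{g,z}\|_Y<\infty$ immediately. To get $g\in A$, I would test the operator on the constant function $1$, which lies in $A\subset X$ and hence in $\overline{\mathcal{P}}^X$ (indeed $1$ is itself a polynomial): since $T_g(1)(z)=\int_0^z g'(\z)\,d\z=g(z)-g(0)$, boundedness into $A$ forces $T_g(1)=g-g(0)\in A$, whence $g\in A$. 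This is the clean observation that evaluating on a single polynomial already detects membership in the disc algebra.

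For (b)$\Rightarrow$(a) I would argue as follows. Given $g\in A$ and the supremum condition, Theorem~\ref{Thm-main-X} already tells us $T_g:\overline{\mathcal{P}}^X\to H^\infty$ is bounded; it remains only to upgrade the target from $H^\infty$ to $A$. The natural strategy is to show $T_g$ maps the dense class of polynomials into $A$ and then use the boundedness to pass to the closure. For a polynomial $p$, the function $T_g(p)(z)=\int_0^z p(\z)g'(\z)\,d\z$ is an integral of $p\,g'$; since $g\in A$, one shows $T_g(p)\in A$ — this is where I expect to lean on the fact that $g\in A$ propagates continuity up to $\overline{\D}$ through the integral, the cleanest route being to approximate $g$ by functions whose $T_g(p)$ is visibly in $A$ (for instance polynomials $g$) and to use that $A$ is closed in $H^\infty$ together with the uniform bound. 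Thus $T_g(\overline{\mathcal{P}}^{X})\subset \overline{T_g(\mathcal{P})}^{H^\infty}\subset \overline{A}^{H^\infty}=A$, giving (a).

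The main obstacle is the last step: verifying that $T_g$ actually carries the closure $\overline{\mathcal{P}}^X$ \emph{into} $A$ and not merely into $H^\infty$. The bound from Theorem~\ref{Thm-main-X} controls the sup-norm but does not by itself guarantee boundary continuity of the image; the hypothesis $g\in A$ must be exploited to secure that $T_g(f)$ extends continuously to $\overline{\D}$. The key point to make rigorous is that $A$ is a closed subspace of $H^\infty$, so once one knows $T_g(p)\in A$ for all polynomials $p$ and that $T_g:\overline{\mathcal{P}}^X\to H^\infty$ is bounded, the image of the closure lands in the closure of $A$, which is $A$ itself. Establishing $T_g(p)\in A$ for every polynomial $p$ when $g\in A$ — equivalently, that $\int_0^z p(\z)g'(\z)\,d\z$ belongs to the disc algebra — is the technical heart, and I would treat it by a density argument on $g$ within $A$ combined with the preservation of $A$ under the uniform limit.
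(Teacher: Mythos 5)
Your proposal is correct and follows essentially the same route as the paper: (a)$\Rightarrow$(b) via Proposition~\ref{bd-from-X=from-polynomials} and Theorem~\ref{Thm-main-X} (with $T_g(1)=g-g(0)$ detecting $g\in A$), and (b)$\Rightarrow$(a) by checking $T_g(p)\in A$ for polynomials $p$ and passing to the closure using the $H^\infty$-boundedness supplied by Theorem~\ref{Thm-main-X} together with the closedness of $A$ in $H^\infty$. The step you flag as the technical heart ($T_g(p)\in A$ when $g\in A$) is exactly what the paper asserts without proof, and your density-in-$g$ argument (or a one-line integration by parts) settles it.
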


\begin{proof}
Proposition~\ref{bd-from-X=from-polynomials} and
Theorem~\ref{Thm-main-X} show that (a) implies (b). Conversely,
assume that $g\in A$ and
$\sup_{z\in\D}\left\|G^{H(\beta)}_{g,z}\right\|_{Y}<\infty$. If $p$
is a polynomial, then $T_g(p)$ belongs to $A$. If
$f\in\overline{\mathcal{P}}^X$, there exists a sequence of
polynomials $\{p_n\}$ such that $\|p_n-f\|_X\to0$. By
Theorem~\ref{Thm-main-X}, $T_g:X\to H^\infty$ is bounded, so
$\|T_g(p_n)-T_g(f)\|_{H^\infty}\lesssim\|p_n-f\|_X\to 0$. Since
$T_g(p_n)\in A$ for all $n$,
 then  $T_g(f)\in A$.
\end{proof}

Theorem~\ref{Thm-main-X} is based on characterizations of dual
spaces. The following  approach to the question of when $T_g:X\to
H^\infty$ is bounded consists of using surjective integral operators
involving a kernel. Since descriptions of dual spaces is closely related to bounded projections, in some concrete  applications Theorems~\ref{Thm-main-X} and~\ref{boundedness_Lp} yield the same conclusion.

For $1<p<\infty$, write $p'$ for the conjugate of $p$, $\frac{1}{p}+\frac{1}{p'}=1$. If $p=1$, set $p'=\infty$.

\begin{theorem}\label{boundedness_Lp}
Let $X\subset\H(\D)$ be a Banach space, $(\Omega,\mu)$ a finite
measure space and $\mu$ a positive measure. Let
$K:\D\times\Omega\to\C$ be a kernel such that
\begin{enumerate}
\item[\rm(i)] $z\mapsto K(z,w)$ is analytic for all $w\in\Omega$;
\item[\rm(ii)] $w\mapsto K(z,w)$ is measurable for all $z\in\D$;
\item[\rm(iii)] $\sup_{z\in E,
w\in\Omega}|K(z,w)|<\infty$ for each compact subset $E\subset\D$.
\end{enumerate}
Let $g\in\H(\D)$, $1<p\le\infty$ and
    $$
    P(h)(z)=\int_\Omega h(w)K(z,w)\, d\mu(w),\quad z\in\D,
    $$
such that $P:L^p(\Omega)\to X$ is bounded and onto. Then $T_g:X\to
H^\infty$ is bounded if and only if
    \begin{equation}\label{Eq:Lp}
    \sup_{z\in\D}\int_\Omega\left|\int_0^zg'(\z)K(\z,w)\,d\z\right|^{p'}\,d\mu(w)<\infty.
    \end{equation}
Moreover, there exists $M=M(P)>0$ such that
    \begin{equation}
    \begin{split}\label{eq:proy2}
  M^{-p'} \|T_g\|^{p'}_{X\to H^\infty} \le  \sup_{z\in\D}\int_\Omega\left|\int_0^zg'(\z)K(\z,w)\,d\z\right|^{p'}\,d\mu(w)\le  \|P\| ^{p'}  \|T_g\|^{p'}_{X\to H^\infty}.
  \end{split}
    \end{equation}
\end{theorem}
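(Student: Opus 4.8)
The plan is to factor the whole problem through the surjection $P$, reducing the boundedness of $T_g$ on $X$ to that of the composition $T_g\circ P$ on $L^p(\Omega)$, where the latter can be analysed by plain $L^p$--$L^{p'}$ duality. Since $P\colon L^p(\Omega)\to X$ is bounded and onto, the open mapping theorem furnishes a constant $M=M(P)>0$ such that every $f\in X$ admits a representation $f=P(h)$ with $h\in L^p(\Omega)$ and $\|h\|_{L^p}\le M\|f\|_X$. Consequently $T_g\colon X\to H^\infty$ is bounded if and only if $T_g\circ P\colon L^p(\Omega)\to H^\infty$ is bounded, and the two operator norms are comparable: composing with $P$ gives $\|T_g\circ P\|\le\|T_g\|\,\|P\|$, while the lifting gives $\|T_g\|\le M\|T_g\circ P\|$. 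These two inequalities are exactly what will produce the two bounds in \eqref{eq:proy2} once the norm of $T_g\circ P$ has been computed.

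First I would compute $T_g\circ P$ explicitly. For $h\in L^p(\Omega)$ and $z\in\D$ one has $P(h)\in X\subset\H(\D)$, so $T_g(P(h))(z)=\int_0^z g'(\zeta)\left(\int_\Omega h(w)K(\zeta,w)\,d\mu(w)\right)\,d\zeta$, and I would interchange the two integrations by Fubini to obtain
$$
T_g(P(h))(z)=\int_\Omega h(w)\,\Phi_z(w)\,d\mu(w),\qquad \Phi_z(w):=\int_0^z g'(\zeta)K(\zeta,w)\,d\zeta.
$$
The interchange is legitimate because the integrand $g'(\zeta)K(\zeta,w)$ is analytic in $\zeta$ (so the path may be taken to be the compact segment $[0,z]\subset\D$), on which the kernel is bounded by hypothesis (iii) and $g'$ is continuous, while $h\in L^p(\Omega)\subset L^1(\Omega)$ as $\mu$ is finite. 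The same estimate shows that $\Phi_z$ is bounded on $\Omega$, hence $\Phi_z\in L^{p'}(\Omega)$ for each fixed $z$, with $w\mapsto\Phi_z(w)$ measurable by (ii) and a Riemann-sum approximation of the path integral.

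The heart of the matter is then the identification, for each fixed $z\in\D$, of the norm of the functional $h\mapsto\int_\Omega h\,\Phi_z\,d\mu$ on $L^p(\Omega)$: by the duality $(L^p)^*=L^{p'}$, valid for $1<p<\infty$ and, in the endpoint case $p=\infty$, $p'=1$, via the isometric inclusion $L^1\hookrightarrow(L^\infty)^*$ granted by finiteness of $\mu$, this norm equals $\|\Phi_z\|_{L^{p'}(\Omega)}$. Since $\|T_g(P(h))\|_{H^\infty}=\sup_{z\in\D}|T_g(P(h))(z)|$, taking the supremum over $\|h\|_{L^p}\le1$ first for fixed $z$ and then over $z$ yields $\|T_g\circ P\|_{L^p\to H^\infty}=\sup_{z\in\D}\|\Phi_z\|_{L^{p'}(\Omega)}$, that is, $\|T_g\circ P\|^{p'}=\sup_{z\in\D}\int_\Omega|\Phi_z(w)|^{p'}\,d\mu(w)$. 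Feeding this into the two operator-norm inequalities of the first step gives at once the equivalence with \eqref{Eq:Lp} and both estimates in \eqref{eq:proy2}.

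I expect the only genuinely delicate points to be bookkeeping rather than conceptual: the careful invocation of Fubini, which is where the hypotheses (i)--(iii) and the finiteness of $\mu$ are really used, and the uniform treatment of the endpoint $p=\infty$ in the duality step. The surjectivity of $P$ enters in precisely one place, namely the open mapping lifting that produces $M(P)$, and it is what makes the $L^p$ side strong enough to control $T_g$ on all of $X$.
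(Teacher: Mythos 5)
Your proposal is correct and follows essentially the same route as the paper: the converse direction via the open mapping theorem lifting $f=P(h)$ with $\|h\|_{L^p}\le M\|f\|_X$, and the forward direction via Fubini and the identification $\sup_{\|h\|_{L^p}\le1}|T_g(P(h))(z)|=\|\Phi_z\|_{L^{p'}}$ by $L^p$--$L^{p'}$ duality (the paper treats $p=\infty$ separately with an explicit unimodular test function $h_z$, which is precisely your isometric embedding $L^1\hookrightarrow(L^\infty)^*$ made concrete). The only cosmetic difference is that you phrase everything as computing the norm of the composition $T_g\circ P$, whereas the paper works with the two inequalities directly.
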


\begin{proof} Let first $p=\infty$, and assume that $T_g:X\to H^\infty$ is bounded. Then
    \begin{equation*}
    \begin{split}
    |T_g(P(h))(z)|
    \le\|T_g\|_{X\to H^\infty}\|P(h)\|_X
    \le\|T_g\|_{X\to H^\infty}\|P\|_{L^\infty\to X}\|h\|_{L^\infty},\quad z\in\D,
    \end{split}
    \end{equation*}
for all $h\in L^\infty=L^\infty(\Omega)$. Fix $z\in\D$ and, for
each $w\in\Omega$, define
    $$
    h_z(w)=
    \left\{
    \begin{array}{ll}
    \frac{\overline{\int_0^zg'(\z)K(\z,w)\,d\z}}{\left|\int_0^zg'(\z)K(\z,w)\,d\z\right|},\quad &\textrm{if }\int_0^zg'(\z)K(\z,w)\,d\z\neq 0,\\
    0, &\textrm{if } \int_0^zg'(\z)K(\z,w)\,d\z=0,
    \end{array}\right.
    $$
so that $h_z\in L^\infty$ for each $z\in\D$ with
$\sup_{z\in\D}\| h_z\|_{L^\infty}=1$. Then Fubini's theorem gives
    $$
    \int_\Omega\left|\int_0^zg'(\z)K(\z,w)\,d\z\right|\,d\mu(w)
    =|T_g(P(h_z))(z)|\le\|T_g\|_{X\to H^\infty}\|P\|_{L^\infty\to X}<\infty,\quad z\in\D,
    $$
so \eqref{Eq:Lp} and the second inequality in \eqref{eq:proy2}
follow.

Conversely, since $P:L^\infty\to X$ is bounded and onto, the open
mapping theorem ensures the existence of a constant $M=M(P)>0$ such
that, for each $f\in X$ there is $h\in L^\infty$ such that $f=P(h)$
and $\|h\|_{L^\infty}\le M\|f\|_X$. A reasoning similar to that
above now yields
    \begin{equation}\label{kulli2}
    \begin{split}
    |T_g(f)(z)|&=\left|\int_0^zg'(\z)\left(\int_\Omega h(w)K(\z,w)\,d\mu(w)\right)d\z\right|\\
    &\le\int_\Omega|h(w)|\left|\int_0^zg'(\z)K(\z,w)\,d\z\right|\,d\mu(w)\\
    &\le M\|f\|_X\int_\Omega\left|\int_0^zg'(\z)K(\z,w)\,d\z\right|\,d\mu(w),
    \end{split}
    \end{equation}
and thus $T_g: X \to H^\infty$ is bounded with
    $$
    M^{-1} \|T_g\|_{X\to H^\infty}
    \le\sup_{z\in\D}\int_\Omega\left|\int_0^zg'(\z)K(\z,w)\,d\z\right|\,d\mu(w).
    $$

Let now $1<p<\infty$, and denote $L^p=L^p(\Omega)$ for short. Assume
that $T_g:X\to H^\infty$ is bounded. Fubini's theorem gives
    \begin{equation*}
    \begin{split}
    \int_\Omega\left|\int_0^zg'(\z)K(\z,w)\,d\z\right|^{p'}\,d\mu(w)
    &=\sup_{\|h\|_{L^p}\le1}\left|\int_0^zg'(\z)P(h)(\z)\,d\z\right|^{p'}\\
    &=\sup_{\|h\|_{L^p}\le1}\left|T_g(P(h))(z)\right|^{p'}\le\|T_g\|_{X\to H^\infty}^{p'}\|P\|_{L^p\to
    X}^{p'},
    \end{split}
    \end{equation*}
so \eqref{Eq:Lp} and the second inequality in \eqref{eq:proy2}
follows.

Conversely, if \eqref{Eq:Lp} is satisfied, then a reasoning similar
to that in \eqref{kulli2}, involving the open mapping theorem,
Fubini's theorem and H\"older's inequality, shows that $T_g:X\to
H^\infty$ is bounded and the claimed norm estimate is satisfied.
\end{proof}

\subsection{Hardy and related spaces}

In this section, Theorem~\ref{Thm-main-X} and
Proposition~\ref{Thm-main-XA} together with
Theorem~\ref{boundedness_Lp} are applied to spaces of analytic
functions  with dual spaces described in terms of the $H^2$-pairing.
To do this, some notation is needed. The space $\BMOA$ (resp.
$\VMOA$) consists of functions in the Hardy space $H^1$ having
bounded (resp. vanishing) mean oscillation on the boundary $\T$. The
space $\mathcal K$ of Cauchy transforms consists of analytic
functions in $\D$ of the form
    $$
    (K\mu)(z)=\int_{\T} \frac{d\mu(\xi)}{1-\overline \xi z},
    $$
where $\mu$ belongs to the space of finite complex Borel
measures on $\T$, and is endowed with the norm
$\|f\|_{\K}=\inf\{\|\mu\|:f=K\mu\}$~\cite{CiMaRo}. Here $\|\mu\|$
denotes the total variation of~$\mu$. For $w\in\T$, the function
$f_w(z)=\frac{1}{1-\overline w z}$ belongs to $\mathcal K$ and
$\|f_w\|_{\mathcal K}=1$. Moreover, $\K$ admits the decomposition
$\K=\K_a+\K_s$, where $\K_a$ and $\K_s$ denote the Cauchy transforms
induced by absolute continuous and singular measures, respectively.
Since $A^*\simeq\K$ and $K_a^*\simeq H^\infty$ via the
$H^2$-pairing~\cite[p.~89--91]{CiMaRo}, this decomposition yields

\begin{equation}\label{eq:A^**}
A^{**}=H^\infty \oplus \K_s^*.
\end{equation}
Recall that
    \begin{equation*}
    \overline{G^{H^2}_{g,z}(w)}=\int_{0}^z\frac{g'(\z)}{1-\overline{w}\z}\,d\z, \quad z\in\D,\quad
    w\in\overline{\D},
    \end{equation*}
and for $g\in\H(\D)$ and $z\in\D$, define
    $$
    d\mu_{g,z}(\z)=\left(\int_0^z\frac{g'(u)}{\z-u}\,du\right)\frac{d\z}{2\pi i},\quad \z\in\T.
    $$
Theorem~\ref{co:intro1} is contained in the
following result.

\begin{theorem}\label{Thm:main-1}
Let $g\in\H(\D)$.
\begin{itemize}
\item[\rm(i)] $T_g:A\to A$ is bounded if and only if $g\in A$ and $\sup_{z\in\D}\left\|G^{H^2}_{g,z}\right\|_{\K}<\infty$. In particular, if $g\in A$ and $\sup_{z\in\D}\|\overline{\mu_{g,z}}\|<\infty$, then $T_g:A\to A$ is bounded.
\item[\rm(ii)] The following statements are equivalent:
\begin{enumerate}
\item[\rm(a)] $T_g:H^\infty\to H^\infty$ is bounded;
\item[\rm(b)] $T_g:A\to H^\infty$ is bounded;
\item[\rm(c)] $\displaystyle\sup_{z\in\D}\left\|G^{H^2}_{g,z}\right\|_{\K}<\infty$.
\end{enumerate}
Moreover, $\|T_g\|_{H^\infty\to H^\infty}=\|T_g\|_{A\to H^\infty}$. In particular, if $ \sup_{z\in\D}\|\overline{\mu_{g,z}}\|<\infty$, then $T_g:H^\infty\to H^\infty$ is bounded.
\item[\rm(iii)] If $1<p<\infty$, then $T_g:H^p\to H^\infty$ is bounded if and only if $\sup_{z\in\D}\left\|G^{H^2}_{g,z}\right\|_{H^{p'}}<\infty$.
\item[\rm(iv)] $T_g:H^1\to H^\infty$ is bounded if and only if $\sup_{z\in\D}\left\|G^{H^2}_{g,z}\right\|_{\BMOA}<\infty$. In particular, $T(H^1,H^\infty)$ contains all polynomials.
\item[\rm(v)] The following statements are equivalent:
\begin{enumerate}
\item[\rm(a)] $T_g:\BMOA\to H^\infty$ is bounded;
\item[\rm(b)] $T_g:\VMOA\to H^\infty$ is bounded;
\item[\rm(c)] $\displaystyle\sup_{z\in\D}\left\| G^{H^2}_{g,z}\right\|_{H^1}<\infty$.
\end{enumerate}
In particular, if $T_g:\VMOA\to H^\infty$ is bounded, then $T_g:H^\infty\to H^\infty$ is bounded.
\item[\rm(vi)] $T_g:\mathcal{K}_a\to H^\infty$ is bounded if and only if $g$ is constant. In particular, if $0<p<1$, then $T_g:H^p\to H^\infty$ is bounded only if $g$ is constant.
\end{itemize}
\end{theorem}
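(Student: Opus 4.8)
The plan is to read all six items as instances of the general criteria of Theorem~\ref{Thm-main-X}, Proposition~\ref{Thm-main-XA} and Proposition~\ref{bd-from-X=from-polynomials}, specialized to the $H^2$-pairing (so that $\beta_n\equiv1$ and $K^{H^2}_z(\z)=(1-\z\overline{z})^{-1}$), feeding in for each space the classical description of its dual or predual via that pairing. With this normalization $G^{H^2}_{g,z}$ is exactly the object appearing in the statement, so each criterion becomes a concrete condition on $g$. The only item requiring genuine work is the \emph{rigidity} statement in~(vi), where the single scalar bound must be shown to force $g$ constant.

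For the duality-driven items I would proceed as follows. In~(i) the space $A$ is admissible with $\overline{\mathcal P}^A=A$ and $A^*\simeq\K$, so Proposition~\ref{Thm-main-XA} with $X=A$, $Y=\K$ gives the characterization; the ``in particular'' clause follows because $G^{H^2}_{g,z}$ is, up to conjugation, the Cauchy transform of $\mu_{g,z}$, whence $\|G^{H^2}_{g,z}\|_\K\le\|\overline{\mu_{g,z}}\|$. In~(ii) the equivalence (b)$\Leftrightarrow$(c) is Theorem~\ref{Thm-main-X} with $X=A$, $Y=\K$, while (a)$\Leftrightarrow$(b) is Proposition~\ref{bd-from-X=from-polynomials} applied to the admissible space $H^\infty$, for which $\overline{\mathcal P}^{H^\infty}=A$; the sharp identity $\|T_g\|_{H^\infty\to H^\infty}=\|T_g\|_{A\to H^\infty}$ comes from the fact that dilation is a \emph{contraction} on $H^\infty$, so the pointwise limit $T_g(f)(z)=\lim_{r\to1^-}T_g(f_r)(z)$ with $\|f_r\|_{H^\infty}\le\|f\|_{H^\infty}$ loses no constant. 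Items (iii), (iv), (v) are Theorem~\ref{Thm-main-X} for the pairs $((H^p)^*,H^{p'})$ with $1<p<\infty$, $((H^1)^*,\BMOA)$ (Fefferman), and $(\VMOA^*,H^1)$, respectively; in~(iv) the polynomial claim follows from Lemma~\ref{polynomials}(2), since $M_z$ is bounded on $H^1$ and, by Privalov's theorem, $T_z\colon H^1\to A\subset H^\infty$ is bounded; in~(v) the equivalence (a)$\Leftrightarrow$(b) is again Proposition~\ref{bd-from-X=from-polynomials} ($\BMOA$ admissible, $\overline{\mathcal P}^\BMOA=\VMOA$), and the last ``in particular'' follows from $H^1\hookrightarrow\K$ with $\|\cdot\|_\K\le\|\cdot\|_{H^1}$, which reduces condition (c) to condition (ii)(c).

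The substantive part is~(vi). Since $\K_a^*\simeq H^\infty$ via the $H^2$-pairing, Theorem~\ref{Thm-main-X} reduces the claim to showing that
$$\sup_{z\in\D}\bigl\|G^{H^2}_{g,z}\bigr\|_{H^\infty}=\sup_{z,w\in\D}\Bigl|\int_0^z\frac{g'(\z)}{1-\overline{w}\z}\,d\z\Bigr|<\infty$$
forces $g$ to be constant, and I would argue by contradiction. Fixing $w$ and viewing $\Phi_w(z)=\int_0^z g'(\z)(1-\overline{w}\z)^{-1}\,d\z$ as a function of $z$, the hypothesis says $\Phi_w\in H^\infty$ with $\|\Phi_w\|_{H^\infty}\le C$ uniformly in $w$; since $\Phi_w'(z)=g'(z)(1-\overline{w}z)^{-1}$, the Schwarz--Pick estimate $|\Phi_w'(z)|\le C(1-|z|^2)^{-1}$ together with $\inf_{w\in\D}|1-\overline{w}z|=1-|z|$ yields $|g'(z)|\le C(1+|z|)^{-1}$, so $g'\in H^\infty$. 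If $g$ were non-constant, then $g'\not\equiv0$ is a nonzero bounded analytic function, hence its nontangential limit $(g')^{*}(e^{i\t})$ is nonzero for almost every $\t$ by the boundary uniqueness theorem. Fixing such a $\t$ and taking $z=w=re^{i\t}$ with integration along the radius gives
$$\bigl|G^{H^2}_{g,re^{i\t}}(re^{i\t})\bigr|=r\Bigl|\int_0^1\frac{g'(tre^{i\t})}{1-r^2t}\,dt\Bigr|,$$
and since $g'(tre^{i\t})\to(g')^{*}(e^{i\t})\ne0$ as $t,r\to1$, a splitting of the integral near the endpoint shows it grows like $|(g')^{*}(e^{i\t})|\log\frac1{1-r}\to\infty$, contradicting the uniform bound $C$. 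Finally, the case $0<p<1$ follows because $\K_a\hookrightarrow\K\hookrightarrow H^p$ continuously, so boundedness of $T_g\colon H^p\to H^\infty$ forces boundedness of $T_g\colon\K_a\to H^\infty$, whence $g$ is constant.

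The main obstacle is precisely the rigidity in~(vi). Two points are delicate: (a) upgrading the uniform $H^\infty$-bound on the family $\{G^{H^2}_{g,z}\}_z$ to the membership $g'\in H^\infty$, for which the Schwarz--Pick estimate combined with the optimal choice of $w$ (making $1-\overline{w}z$ as small as possible) is the clean device; and (b) ensuring genuine, non-cancelling logarithmic blow-up of the radial integral, which I secure by choosing a boundary direction where the nontangential limit of $g'$ is nonzero, so that the integrand has essentially constant argument near the endpoint and a lower bound of the form $\tfrac12|(g')^{*}(e^{i\t})|\int_{1-\delta}^1(1-r^2t)^{-1}\,dt$ applies.
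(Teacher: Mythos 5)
Your proposal is correct, and for items (i)--(v) it follows essentially the same route as the paper: Proposition~\ref{Thm-main-XA} for $A$, Proposition~\ref{bd-from-X=from-polynomials} with $\overline{\mathcal P}^{H^\infty}=A$ (and the contractivity of dilation on $H^\infty$) for the norm identity in (ii), and Theorem~\ref{Thm-main-X} with the classical dualities $(H^p)^*\simeq H^{p'}$, $(H^1)^*\simeq\BMOA$, $\VMOA^*\simeq H^1$, $\K_a^*\simeq H^\infty$ for (iii)--(vi); the only cosmetic difference is that in (iv) you get the polynomial claim from Privalov's theorem via Lemma~\ref{polynomials}(2), whereas the paper observes directly that $\log(1-z)\in\BMOA$ so that $\sup_z\|G^{H^2}_{g,z}\|_{\BMOA}<\infty$ for $g(z)=z$ -- the two facts are interchangeable here.

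Where you genuinely diverge is the rigidity statement (vi). The paper expands $\int_0^z g'(\z)(1-\overline{w}\z)^{-1}\,d\z$ in powers of $\overline{w}$, evaluates at $w=z$, bounds $M_\infty$ from below by $M_2$ on the circle $|z|=r$, and uses Parseval to isolate the first nonvanishing Taylor coefficient $\widehat{g}(N+1)$, arriving at a lower bound of order $|\widehat{g}(N+1)|\log\frac{1}{1-r^2}\to\infty$; it is a purely computational Fourier-coefficient argument requiring no boundary-value theory. You instead first upgrade the uniform bound on the family $\Phi_w$ to $g'\in H^\infty$ via Schwarz--Pick combined with the optimal choice of $w$ (this step is sound: $|g'(z)|\le C\,|1-\overline{w}z|/(1-|z|^2)$ for every $w$, and taking $w=z$ already gives $|g'(z)|\le C$), then invoke the a.e.\ nonvanishing of radial limits of the nonzero bounded function $g'$ and extract the same logarithmic divergence from the radial integral at a good boundary direction, the positivity of $1-r^2t$ preventing cancellation. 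Both arguments ultimately exploit the $\log\frac{1}{1-|z|^2}$ blow-up of the kernel on the diagonal $w=z$; yours buys the additional structural fact that boundedness of $T_g:\K_a\to H^\infty$ forces $\|g'\|_{H^\infty}\lesssim\|T_g\|$, at the cost of relying on Fatou/Riesz boundary-value theory, while the paper's stays entirely at the level of Taylor coefficients. Your reduction of the case $0<p<1$ through the embeddings $\K_a\subset\K\subset H^p$ coincides with the paper's use of \eqref{eq:inclusion}.
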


\begin{proof}
(i) The first assertion is a consequence of
Proposition~\ref{Thm-main-XA} and \cite[Theorem~4.2.2]{CiMaRo}. To see the second assertion, it suffices to use
Fubini's theorem and the Cauchy integral formula to show that
$\overline{\mu_{g,z}}$ is one of the representing measures of
$G^{H^2}_{g,z}$ as a function in~$\K$, that is,
    \begin{equation*}
  \overline{G^{H^2}_{g,z}(w)}=\overline{(K\mu_{g,z})
  (w)}= \int_\T\frac{d\mu_{g,z}(\z)}{1-\overline{w}\z},\quad w\in\D.
    \end{equation*}

(ii) The fact that (a) and (b) are equivalent is due to
Proposition~\ref{bd-from-X=from-polynomials}, and its proof shows
that $\|T_g\|_{H^\infty\to H^\infty}=\|T_g\|_{A\to H^\infty}$.
Theorem~\ref{Thm-main-X} gives the equivalence between~(b) and~(c).

(iii) This is a consequence of the duality $(H^p)^*\simeq H^{p'}$
for $1<p<\infty$, and Theorem~\ref{Thm-main-X}. An alternative way
to deduce the assertion is to note that the Szeg\"o projection
    $$
    P(f)(z)=\frac{1}{2\pi}\int_0^\pi \frac{f(e^{i\theta})}{1-ze^{-i\theta}}\, d\theta,\quad z\in\D,
    $$
is bounded from $L^p(\T)$ onto $H^p$~\cite[Theorem~9.6]{Zhu},
and
then apply Theorem~\ref{boundedness_Lp}.

(iv) Fefferman's duality theorem states that
$(H^1)^*\simeq\BMOA$ via the $H^2$-pairing \cite[Theorem
9.20]{Zhu}. Therefore the assertion follows by
Theorem~\ref{Thm-main-X}. The function $z\mapsto\log(1-z)$ belongs
to $\BMOA$, and hence
$\sup_{z\in\D}\|G^{H^2}_{g,z}\|_{\BMOA}<\infty$ for $g(z)=z$.
Therefore $T(H^1,H^\infty)$ contains all polynomials by
Lemma~\ref{polynomials}.

(v) Since $\VMOA$ is the closure of polynomials in $\BMOA$, the
equivalence between (a) and (b) is due to
Proposition~\ref{bd-from-X=from-polynomials}. Using again
Theorem~\ref{Thm-main-X} and the duality relation $\VMOA^*\simeq
H^1$~\cite[Theorem 9.28]{Zhu},
 we deduce that (b) and (c) are equivalent.
Alternatively, the Szeg\"o projection is bounded from
$L^\infty(\T)$ onto $\BMOA$ by~\cite[Theorem~9.21]{Zhu},
and hence (a) and (c) are equivalent by
Theorem~\ref{boundedness_Lp}.

(vi) If $T_g:\mathcal{K}_a\to H^\infty$ is bounded, then
Theorem~\ref{Thm-main-X} gives
$\sup_{z\in\D}\|G^{H^2}_{g,z}\|_{H^\infty}<\infty$. Assume that $g$
is not constant and let $N\in\N\cup\{0\}$ such that
$\widehat{g}(N+1)\ne0$. Since
    \begin{equation*}
    \begin{split}
    \int_0^z\frac{g'(\z)}{1-\overline{w}\z}\,d\z
    =\sum_{k=0}^\infty\left(\sum_{n=0}^\infty (n+1)\widehat{g}(n+1)\frac{z^{n+k+1}}{n+k+1}\right)\overline{w}^k,
    \end{split}
    \end{equation*}
we deduce
    \begin{equation*}
    \begin{split}
    \sup_{z\in\D}\|G^{H^2}_{g,z}\|_{H^\infty}
    &\ge\sup_{z\in\D}\left|\sum_{k=0}^\infty\left(\sum_{n=0}^\infty(n+1)\widehat{g}(n+1) \frac{z^{n+1}}{n+k+1}\right)|z|^{2k}\right|\\
    &= \sup_{z\in\D}\left| \sum_{n=0}^\infty(n+1)\widehat{g}(n+1) \left(\sum_{k=0}^\infty \frac{|z|^{2k} }{n+k+1}\right) z^{n+1}\right|
    \\ & \ge \sup_{0<r<1}\left(\frac{1}{2\pi}\int_0^{2\pi}
    \left| \sum_{n=0}^\infty(n+1)\widehat{g}(n+1) \left(\sum_{k=0}^\infty \frac{r^{2k} }{n+k+1}\right) (re^{i\theta})^{n+1}\right|^2
     \,d\theta \right)^{1/2}
     \\ & = \sup_{0<r<1}\left( \sum_{n=0}^\infty(n+1)^2|\widehat{g}(n+1)|^2 \left(\sum_{k=0}^\infty \frac{r^{2k+n+1} }{n+k+1}\right)^2
    \right)^{1/2}
     \\ & \ge \sup_{0<r<1} (N+1)|\widehat{g}(N+1)|\sum_{k=0}^\infty \frac{r^{2k+N+1} }{N+k+1}
     \\ & \ge \lim_{r\to 1^-} |\widehat{g}(N+1)|r^{N-1}\sum_{k=0}^\infty \frac{r^{2(k+1)} }{k+1}
     \\ & \ge \lim_{r\to 1^-}
     |\widehat{g}(N+1)|r^{N-1}\log\frac{1}{1-r^2}=\infty.
    \end{split}
    \end{equation*}
This implies that $T_g:\K_a\to H^\infty$ is bounded only if $g$ is
constant. The second assertion follows by \eqref{eq:inclusion} below.
\end{proof}

Theorem~\ref{Thm:main-1}(vi) implies that $T(X,H^\infty)$ contains
constant functions only when $\mathcal{K}_a\subset X\subset \H(\D)$.
The chain of inclusions
    \begin{equation}\label{eq:inclusion}
    \K_a\subset \K\subset H^{1,\infty}\subset \cap_{0<p<1}H^p\subset
    \cap_{0<p<1}A^p_\om
    \end{equation}
gives examples of such $X$. Here $H^{1,\infty}$ denotes the weak
Hardy space of order $1$~\cite[p.~35]{CiMaRo} and $\om$ is any
radial weight. For the embedding $\K\subset H^{1,\infty}$,
see~\cite[p.~75]{CiMaRo}. Moreover,
\cite[Proposition~4.1.17]{CiMaRo} says that
    $
    \K\subset H_v^\infty=\{f\in \H(\D):\sup_{z\in \D} v(|z|)|f(z)|\}
    $
for $v(r)=1-r$. Therefore $T_g$ maps the previous space into
$H^\infty$ if and only if $g$ is constant.

\subsection{Weighted Bergman and related
spaces}\label{sec:bergman}

In this section
we deal with spaces of analytic functions in $\D$ which dual spaces
are usually described in terms of the $A^2_\om$-pairing. To do this,
some notation are in order. A nonnegative integrable function $\om$
in~$\D$ is called a weight. It is radial if $\omega(z)=\omega(|z|)$
for all $z\in\D$. For $0<p<\infty$ and a weight $\omega$, the
weighted Bergman space $A^p_\omega$ consists of $f\in\H(\D)$ such
that
    $$
    \|f\|_{A^p_\omega}^p=\int_\D|f(z)|^p\omega(z)\,dA(z)<\infty,
    $$
where $dA(z)=\frac{dx\,dy}{\pi}$ is the normalized Lebesgue area
measure on $\D$. That is, $A^p_\om=L^p_\om\cap \H(\D)$ where
$L^p_\om$ is the corresponding Lebesgue space. As usual,
write~$A^p_\alpha$ for the standard weighted Bergman space induced
by the radial weight $(1-|z|^2)^\alpha$.

We will write $\DD$ for the class of radial weights such that
$\widehat{\om}(z)=\int_{|z|}^1\om(s)\,ds$ is doubling, that is,
there exists $C=C(\om)\ge1$ such that $\widehat{\om}(r)\le
C\widehat{\om}(\frac{1+r}{2})$ for all $0\le r<1$. We refer to
\cite[Lemma~2.1]{PRAntequera} for equivalent descriptions of the
class $\DD$.
 A radial weight
$\om$ is called regular, denoted by $\om\in\R$, if $\om\in\DD$ and
$\om(r)(1-r)\asymp\widehat{\om}(r)$ for all $0\le r<1$. As to
concrete examples, every standard weight as well as those given in
\cite[(4.4)--(4.6)]{AS} are regular.

The reproducing kernels of the Bergman space~$A^2_\om$ induced by $\om\in\DD$
are given by
    $$
    B^\om_\z(z)=\sum_{n=0}^\infty\frac{(\overline{\z}z)^n}{2\om_{2n+1}},\quad \z,\,z\in\D,
    $$
where $\om_{2n+1}=\int_{0}^1 r^{2n+1}\om(r)\,dr$ for all $n$. The
orthogonal Bergman projection $P_\om$ from $L^2_\om$ to $A^2_\om$
can be written as
    \begin{equation*}\label{intoper}
    P_\om(f)(z)=\int_{\D}f(\z)\overline{B^\om_{z}(\z)}\,\om(\z)dA(\z),\quad z\in\D.
    \end{equation*}
If $\om\in\R$, then $P_\om$ maps $L^p_\om$ onto $A^p_\om$ for
$1<p<\infty$, and $L^\infty(\D)$ into the Bloch
space~\cite[Theorem~5]{PelRatproj}. Recall that the Bloch space $\B$
consists of $f\in\H(\D)$ such that
$\|f\|_\B=\sup_{z\in\D}|f'(z)|(1-|z|^2)+|f(0)|<\infty$. The little
Bloch space $\B_0$ is the closure of polynomials in $\B$. For
$g\in\H(\D)$ and $z\in\D$, write
    $$
    \overline{G^{A^2_\om}_{g,z}(w)}=\int_{0}^zg'(\z)\overline{B_\zeta^\om(w)}\,d\z, \quad w\in\overline{\D}.
    $$

\begin{theorem}\label{Thm-main-2}
Let $\om\in\R$ and $g\in\H(\D)$.
\begin{itemize}
\item[\rm(i)] If $1<p<\infty$, then $T_g:A^p_\om\to H^\infty$ is bounded if and only if $\sup_{z\in\D}\left\|G^{A^2_\om}_{g,z}\right\|_{A^{p'}_\om}<\infty$.
\item[\rm(ii)] If $0<p\le1$, then $T_g:A^p_\om\to H^\infty$ is bounded if and only if $g$ is constant.
\item[\rm(iii)] The following statements are equivalent:
\begin{enumerate}
\item[\rm(a)] $T_g:\B\to H^\infty$ is bounded;
\item[\rm(b)] $T_g:\B_0\to H^\infty$ is bounded;
\item[\rm(c)] $\sup_{z\in\D}\left\|G^{A^2_\om}_{g,z}\right\|_{A^1_\om}<\infty$.
\end{enumerate}
\end{itemize}
\end{theorem}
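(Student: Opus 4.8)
The plan is to obtain the three parts from the two general boundedness criteria already proved, Theorem~\ref{Thm-main-X} and Theorem~\ref{boundedness_Lp}, by feeding in the duality and projection theory of $A^2_\om$-spaces for regular weights. Throughout I identify the $A^2_\om$-pairing with the $H(\beta)$-pairing for $\beta_n=2\om_{2n+1}$: since $\int_\D|z|^{2n}\om\,dA=2\om_{2n+1}$ one has $\sqrt[n]{\beta_n}\to1$ and $K^{H(\beta)}_z=B^\om_z$, so that $G^{H(\beta)}_{g,z}=G^{A^2_\om}_{g,z}$ and Theorem~\ref{Thm-main-X} is directly applicable. For (i) I would invoke the duality $(A^p_\om)^*\simeq A^{p'}_\om$ via the $A^2_\om$-pairing, valid for regular weights and $1<p<\infty$; Theorem~\ref{Thm-main-X} with $X=A^p_\om$, $Y=A^{p'}_\om$ then yields the stated characterization verbatim. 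Equivalently one may run Theorem~\ref{boundedness_Lp} with $\Omega=\D$, $d\mu=\om\,dA$ and kernel $K(\z,w)=\overline{B^\om_\z(w)}=B^\om_w(\z)$, using that $P_\om$ maps $L^p_\om$ onto $A^p_\om$; since $\int_0^z g'(\z)K(\z,w)\,d\z=\overline{G^{A^2_\om}_{g,z}(w)}$, the quantity in \eqref{Eq:Lp} is exactly $\|G^{A^2_\om}_{g,z}\|_{A^{p'}_\om}^{p'}$.

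For (iii) the equivalence (a)$\Leftrightarrow$(b) I would get from Proposition~\ref{bd-from-X=from-polynomials} after checking that $\B$ is admissible: (P1) holds because $A\subset H^\infty\subset\B$ by the Schwarz--Pick inequality, (P2) follows from $r(1-|z|^2)\le1-|rz|^2$, and (P3) is the logarithmic growth bound. Since $\B_0=\overline{\mathcal P}^{\B}$ by definition, the proposition applies. The equivalence (b)$\Leftrightarrow$(c) then follows from Theorem~\ref{Thm-main-X} applied to $X=\B_0$, $Y=A^1_\om$, using the duality $\B_0^*\simeq A^1_\om$ via the $A^2_\om$-pairing.

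Part (ii) is the delicate one. Constancy of $g$ gives $T_g\equiv0$, so only the forward implication has content, and since $A^1_\om\subset A^p_\om$ continuously for $0<p<1$ (finite measure), boundedness on $A^p_\om$ forces boundedness on $A^1_\om$; thus it suffices to treat $p=1$. Here I would reduce to the Cauchy transform result Theorem~\ref{Thm:main-1}(vi) by proving the continuous embedding $\K_a\hookrightarrow A^1_\om$. For $f=K\mu$ the elementary bound $\frac1{2\pi}\int_0^{2\pi}|1-re^{it}|^{-1}\,dt\asymp\log\frac1{1-r}$ gives $M_1(r,f)\lesssim\|\mu\|\log\frac1{1-r}$, whence $\|f\|_{A^1_\om}=\int_0^1 2r\om(r)M_1(r,f)\,dr\lesssim\|f\|_\K\int_0^1\om(r)\log\frac1{1-r}\,dr$. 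Integrating by parts, the last integral reduces to $\int_0^1\frac{\widehat{\om}(r)}{1-r}\,dr\asymp\int_0^1\om<\infty$ together with a boundary term $\log(1-r)\,\widehat{\om}(r)$ that vanishes since regularity yields $\widehat{\om}(r)\lesssim(1-r)^c$ for some $c>0$. With the embedding in hand, boundedness of $T_g:A^1_\om\to H^\infty$ entails boundedness of $T_g:\K_a\to H^\infty$, and $g$ is constant by Theorem~\ref{Thm:main-1}(vi).

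The routine content lies in (i) and (iii), which are essentially bookkeeping once the relevant dualities are quoted. The real obstacle is the borderline embedding $\K_a\hookrightarrow A^1_\om$ behind (ii): the inclusion chain \eqref{eq:inclusion} supplies $\K_a\subset A^p_\om$ only for $p<1$, so the endpoint $p=1$ hinges on the finiteness of $\int_0^1\om(r)\log\frac1{1-r}\,dr$, and it is precisely here that the regularity of $\om$, through the decay estimate $\widehat{\om}(r)\lesssim(1-r)^c$, cannot be dispensed with.
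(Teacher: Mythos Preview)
Your treatment of (i) is the same as the paper's; both invoke either Theorem~\ref{Thm-main-X} with the duality $(A^p_\om)^*\simeq A^{p'}_\om$ from \cite{PelRatproj}, or equivalently Theorem~\ref{boundedness_Lp} with the surjectivity of $P_\om$ on $L^p_\om$.

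For (ii) your argument is correct but takes a slightly different route from the paper. You embed $\K_a$ directly into $A^1_\om$ via the logarithmic bound $M_1(r,K\mu)\lesssim\|\mu\|\log\frac{1}{1-r}$, and then conclude from Theorem~\ref{Thm:main-1}(vi). The paper instead proves the stronger inclusion $\bigcap_{0<q<1}H^q\subset A^1_\om$, using the growth estimate $M_1(r,f)\lesssim(1-r)^{-\e}$ valid for $f\in\bigcap_{q<1}H^q$, and then appeals to the chain~\eqref{eq:inclusion}. Your route is a bit more economical since the logarithmic growth of the Cauchy kernel is elementary and you do not need the Hardy--Littlewood estimate for $H^p$ with $p<1$; the paper's embedding in turn yields a little more, namely that the whole intersection $\bigcap_{q<1} H^q$ sits inside $A^1_\om$. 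Both arguments exploit the same regularity feature of $\om$, the decay $\widehat{\om}(r)\lesssim(1-r)^{c}$, which is precisely what makes $\int_0^1\om(r)\log\frac{1}{1-r}\,dr$ (your version) or $\int_0^1(1-r)^{-\e}\om(r)\,dr$ (the paper's version) finite.

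For (iii) there is a gap. You invoke the duality $\B_0^*\simeq A^1_\om$ via the $A^2_\om$-pairing, but for a general regular $\om$ this is not a fact one can simply cite, and the paper does not quote it. The paper sidesteps the issue by using Theorem~\ref{boundedness_Lp} rather than Theorem~\ref{Thm-main-X}: it shows directly that $P_\om:L^\infty(\D)\to\B$ is onto by exhibiting, for a given $f\in\B$, the explicit preimage
\[
h(z)=\widehat{f}(0)+\frac{\widehat{\om}(z)}{|z|\,\om(z)}\bigl(2zf'(z)+f(z)-f(0)\bigr)\in L^\infty(\D),
\]
and then Theorem~\ref{boundedness_Lp} delivers (a)$\Leftrightarrow$(c). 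Your approach can be completed, but you would need to supply essentially this surjectivity argument (or an equivalent one) to justify the duality you assert; as written, that step is unsupported.
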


\begin{proof}
(i) This follows either by Theorem~\ref{Thm-main-X} and
\cite[Corollary~7]{PelRatproj}, or Theorem~\ref{boundedness_Lp} and
\cite[Theorem~5]{PelRatproj}.

(ii) By Theorem~\ref{Thm:main-1}(vi) and \eqref{eq:inclusion} it
suffices to show that $\cap_{0<p<1}H^p \subset A^1_\om$. The
continuous weight $\widetilde{\om}(r)=\frac{\widehat{\om}(r)}{1-r}$
belongs to $\R$ and $\om\asymp \widetilde{\om}$.
 Since each $f\in \cap_{0<p<1}H^p$ satisfies
$M_1(r,f)\lesssim(1-r)^{-\e}$, $0<r<1$, for any $\ep>0$ (see
\cite[Theorem 5.9]{Duren1970}), we have
    \begin{equation*}
    \|f\|_{A^1_\om}\asymp\|f\|_{A^1_{\widetilde{\om}}}\lesssim\int_0^1(1-r)^{-\e}\widetilde{\om}(r)\,dr.
    \end{equation*}
Since $(1-r)^{-\e}\widetilde{\om}(r)$ is a weight for sufficiently small $\e=\e(\widetilde{\om})>0$ by \cite[p.~10]{PelRat}, we deduce $\cap_{0<p<1}H^p \subset A^1_\om$.

(iii) Proposition~\ref{bd-from-X=from-polynomials} shows that (a)
and (b) are equivalent. Let us prove  the equivalence between (a)
and (c).
 The Bergman projection $P_\om$
maps $L^\infty (\D)$ into the Bloch space $\B$ by
\cite[Theorem~5]{PelRatproj}. Moreover, for given $f\in\B$, the
function
    $$
    g(z)=\widehat{f}(0)+ \frac{\widehat{\om}(z)}{|z|\om(z)}\sum_{n=0}^\infty (2n+1)\widehat{f}(n)z^n
    =\widehat{f}(0)+ \frac{\widehat{\om}(z)}{|z|\om(z)}\left(2zf'(z)+f(z)-f(0) \right),
    $$
belongs to $L^\infty(\D)$, and satisfies $P_\om(g)=f$. Thus $P_\om:L^\infty (\D)\to\B$ is onto, and therefore $T_g:\B\to H^\infty$ is bounded if and only if $\sup_{z\in\D}\left\|G^{A^2_\om}_{g,z}\right\|_{A^1_\om}<\infty$ by Theorem~\ref{boundedness_Lp}.
\end{proof}

Next, we use  Theorem~\ref{Thm-main-2}(i)  to prove the equivalence
between (a) and (c) in Theorem~\ref{Thm:g-is-constantintro}.
 Some notation and preliminary results are now in order. If
$g(z)=\sum_{k=0}^\infty \widehat{g}(k) z^k$ is analytic in $\D$ and
$n_1,n_2\in\N\cup\{0\}$, write
    $$
    S_{n_1,n_2}g(z)=\sum_{k=n_1}^{n_2-1}\widehat{g}(k)z^k,\quad n_1<n_2.
    $$

The following result is essentially known~\cite{PelRathg}, but we
include a proof for the sake of completeness.

\begin{lemma}\label{le:Deltan}
Let $1<p,C_1, C_2<\infty$. Then
    $$
    \left\|S_{n_1,n_2}\left(\frac{1}{1-z}\right)\right\|_{H^p}\asymp n_2^{(1-\frac{1}{p})}
    $$
    for all $n_1,n_2\in\N$ such that $C_1\le\frac{n_2}{n_1}\le C_2$.
\end{lemma}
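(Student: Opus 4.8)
The plan is to reduce the statement to the classical $L^p$-estimate for the Dirichlet kernel. First I would observe that
$$
S_{n_1,n_2}\left(\frac{1}{1-z}\right)(z)=\sum_{k=n_1}^{n_2-1}z^k=z^{n_1}\,\frac{1-z^{N}}{1-z},\qquad N=n_2-n_1,
$$
which is a polynomial, so its $H^p$-norm coincides with its $L^p(\T)$-norm. Since the factor $z^{n_1}$ has modulus one on $\T$, the boundary modulus is
$$
\left|S_{n_1,n_2}\left(\tfrac{1}{1-z}\right)(e^{i\theta})\right|=\left|\frac{1-e^{iN\theta}}{1-e^{i\theta}}\right|=\frac{|\sin(N\theta/2)|}{|\sin(\theta/2)|},
$$
so the problem is \emph{exactly} to show that the $L^p$-norm of this Dirichlet-type kernel is comparable to $N^{1-1/p}$, uniformly in $N$.

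Next I would estimate $\frac{1}{2\pi}\int_0^{2\pi}\left(\frac{|\sin(N\theta/2)|}{|\sin(\theta/2)|}\right)^p\,d\theta$ by splitting $[-\pi,\pi]$ into the central peak $|\theta|\le \pi/N$ and the tail $\pi/N\le|\theta|\le\pi$. Using $\sin(\theta/2)\asymp\theta$ on $[-\pi,\pi]$ together with $|\sin(N\theta/2)|\le N|\theta|/2$, the kernel is comparable to $N$ on the peak, and since the peak has length $\asymp 1/N$ it contributes $\asymp N^{p-1}$; this already gives the lower bound $N^{1-1/p}$. On the tail the bound $|\sin(N\theta/2)|\le 1$ forces the kernel to be $\lesssim 1/|\theta|$, and for $p>1$
$$
\int_{\pi/N}^{\pi}\frac{d\theta}{\theta^{p}}=\frac{(\pi/N)^{1-p}-\pi^{1-p}}{p-1}\asymp N^{p-1},
$$
so the tail contributes $\lesssim N^{p-1}$ as well. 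Adding the two pieces yields the upper bound, and hence $\left\|S_{n_1,n_2}\left(\frac{1}{1-z}\right)\right\|_{H^p}\asymp N^{1-1/p}=(n_2-n_1)^{1-1/p}$ with constants depending only on $p$.

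Finally, I would invoke the hypothesis $C_1\le n_2/n_1\le C_2$. Since $C_1>1$ gives $n_2-n_1\ge n_2(1-1/C_1)>0$ while trivially $n_2-n_1\le n_2$, we have $n_2-n_1\asymp n_2$ with constants depending only on $C_1$, and therefore $(n_2-n_1)^{1-1/p}\asymp n_2^{1-1/p}$, which is the claim. The main obstacle is the Dirichlet-kernel estimate of the second paragraph: one must track that all comparability constants are independent of $n_1,n_2$, and it is precisely here that the hypothesis $p>1$ is essential — for $p=1$ the tail integral produces a logarithmic factor $\log N$ instead of $N^{p-1}$, so the stated power-law behaviour fails.
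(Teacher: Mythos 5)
Your argument is correct, but it proceeds quite differently from the paper's proof. You exploit the closed form $\sum_{k=n_1}^{n_2-1}z^k=z^{n_1}(1-z^{N})/(1-z)$ to reduce everything to the classical $L^p$-estimate for the Dirichlet-type kernel $|\sin(N\theta/2)|/|\sin(\theta/2)|$, which you then prove by the standard peak/tail splitting at $|\theta|=\pi/N$; the hypothesis $p>1$ enters through the convergence rate of the tail integral $\int_{\pi/N}^{\pi}\theta^{-p}\,d\theta\asymp N^{p-1}$. The paper instead obtains the upper bound from \cite[Lemma~10]{PelRathg} (which says that the $H^p$-norm of such a block is comparable to its integral mean $M_p(1-1/n_2,\cdot)$) together with the M.~Riesz projection theorem applied to $h(z)=(1-z)^{-1}$, and the lower bound from the evaluation $M_\infty(1-1/n_1,S_{n_1,n_2}h)\asymp n_2$ combined with the inequality $M_\infty(r,f)\lesssim(\rho-r)^{-1/p}M_p(\rho,f)$; there $p>1$ is needed for the Riesz projection. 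Your route is more elementary and entirely self-contained, at the cost of being tied to the specific function $1/(1-z)$; the paper's route is shorter on the page and is the natural specialization of machinery that works for blocks of more general functions. One small presentational point: on the peak $|\theta|\le\pi/N$ the two-sided comparison of the kernel with $N$ requires the lower estimate $|\sin(N\theta/2)|\gtrsim N|\theta|$ (e.g.\ $\sin x\ge 2x/\pi$ for $|x|\le\pi/2$), not only the upper bound $|\sin(N\theta/2)|\le N|\theta|/2$ that you cite; this is standard and does not affect the validity of the proof. Your closing remark that the estimate fails at $p=1$ with a logarithmic correction is accurate and consistent with the lemma's hypothesis.
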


\begin{proof}
Let $h(z)=(1-z)^{-1}$. By \cite[Lemma~10]{PelRathg} and the M. Riesz
projection theorem,
    \begin{equation}\label{eq:j11}
    \begin{split}
    \left\|S_{n_1,n_2}h\right\|^p_{H^p}
    \asymp M^p_p\left(1-\frac{1}{n_2},S_{n_1,n_2}h\right)
    \lesssim M^p_p\left(1-\frac{1}{n_2},h\right)\asymp n_2^{p-1}.
    \end{split}
    \end{equation}
On the other hand, the hypothesis $1<C_1\le\frac{n_2}{n_1}\le C_2<\infty$ yields
    \begin{equation*}
    \begin{split}
    M_\infty\left(1-\frac{1}{n_1}, S_{n_1,n_2}h\right)\asymp n_2-n_1\asymp n_2.
    \end{split}
    \end{equation*}
Furthermore, by using the well-know inequality $M_\infty(r,f)\lesssim (\rho-r)^{-\frac1p}M_p(\rho,f)$, $0<r<\rho<1$, we deduce
    \begin{equation*}
    \begin{split}
    M_\infty\left(1-\frac{1}{n_1}, S_{n_1,n_2}h\right)
    \lesssim \left(\frac{1}{n_1}-\frac{1}{n_2}\right)^{-1/p}
    M_p\left(1-\frac{1}{n_2}, S_{n_1,n_2}h\right)
    \asymp n_2^{1/p}\left\|S_{n_1,n_2}h\right\|_{H^p},
    \end{split}
    \end{equation*}
and hence $n_2^{p-1}\lesssim \left\|S_{n_1,n_2}h\right\|^p_{H^p}$. This together with
\eqref{eq:j11} proves the assertion.
\end{proof}

Throughout the rest of the section we assume, without loss of
generality, that \linebreak[4] $\int_0^1 \om(s)\,ds=1$. For each
$n\in\N\cup\{0\}$, define $r_n=r_n(\om)\in[0,1)$ by
    \begin{equation}\label{rn}
    \widehat{\om}(r_n)=\int_{r_n}^1 \om(s)\,ds=\frac{1}{2^n}.
    \end{equation}
Clearly, $\{r_n\}_{n=0}^\infty$ is a non-decreasing sequence of
distinct points on $[0,1)$ such that $r_0=0$ and $r_n\to1^-$, as
$n\to\infty$. For $x\in[0,\infty)$, let $E(x)$ denote the integer
such that $E(x)\le x<E(x)+1$, and set
$M_n=E\left(\frac{1}{1-r_{n}}\right)$. Write
    $$
    I(0)=I_{\om}(0)=\left\{k\in\N\cup\{0\}:k<M_1\right\}
    $$
and
   \begin{equation*}
    I(n)=I_{\om}(n)=\left\{k\in\N:M_n\le
    k<M_{n+1}\right\}
  \end{equation*}
for all $n\in\N$. If
$f(z)=\sum_{n=0}^\infty\widehat{f}(n)z^n$ is analytic in~$\D,$ define the
polynomials $\Delta^{\om}_nf$ by
    \[
    \Delta_n^{\om}f(z)=\sum_{k\in I_{\om}(n)}\widehat{f}(n)z^k,\quad n\in\N\cup\{0\}.
    \]

The following equivalent norm in $A^p_\om$ plays an important role
in the proof of Theorem~\ref{Thm:g-is-constantintro}.

\begin{lettertheorem}\label{th:dec}
Let $1<p<\infty$ and $\om\in\DD$ such that $\int_0^1\om(t)\,dt=1$.
Then
    $$
    \|f\|_{A^p_\om}^p\asymp\sum_{n=0}^\infty2^{-n}\|\Delta_n^\om f\|_{H^p}^p
    $$
for all $f\in\H(\D)$.
\end{lettertheorem}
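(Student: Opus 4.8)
The plan is to pass from the Bergman norm of $f$ to a discrete sum of integral means over the annuli cut out by the points $r_n$, and then to match each annulus with its polynomial block $\Delta_n^\om f$. First I would reduce to integral means: since $\om$ is radial, $\|f\|_{A^p_\om}^p\asymp\int_0^1 M_p^p(r,f)\,\om(r)\,dr$, and because $r\mapsto M_p(r,f)$ is nondecreasing while $\int_{r_n}^{r_{n+1}}\om(s)\,ds=\widehat{\om}(r_n)-\widehat{\om}(r_{n+1})=2^{-(n+1)}$ by \eqref{rn}, splitting the integral over the annuli $[r_n,r_{n+1})$ and bounding $M_p^p(r,f)$ above and below by its values at the endpoints gives
$$
\|f\|_{A^p_\om}^p\asymp\sum_{n=0}^\infty 2^{-n}M_p^p(r_n,f).
$$
It then suffices to compare the right-hand side with $\sum_{n}2^{-n}\|\Delta_n^\om f\|_{H^p}^p$.

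Two facts about $\DD$-weights drive the comparison. From \cite[Lemma~2.1]{PRAntequera} the doubling of $\widehat{\om}$ yields $\beta=\beta(\om)>0$ and $C>0$ with $\widehat{\om}(r)\le C\left(\frac{1-r}{1-t}\right)^\beta\widehat{\om}(t)$ for $0\le r\le t<1$; evaluating at $r=r_n$, $t=r_k$ with $n\le k$ and recalling $M_j\asymp(1-r_j)^{-1}$ produces the separation $M_k/M_n\gtrsim 2^{(k-n)/\beta}$, hence the rapid decay $r_n^{M_k}\lesssim e^{-c\,2^{(k-n)/\beta}}$ for $k\ge n$. The second tool is a block localization. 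Writing $\Delta_k^\om f(w)=w^{M_k}Q_k(w)$, where $Q_k$ is a polynomial of degree $<M_{k+1}-M_k$ whose $H^p$-norm equals $\|\Delta_k^\om f\|_{H^p}$ because the factor $w^{M_k}$ is unimodular on $\T$, a dilation gives the one-sided bound $M_p(r_n,\Delta_k^\om f)\le r_n^{M_k}\|\Delta_k^\om f\|_{H^p}$. In the reverse direction, since $r_{n+1}^{M_n}\asymp1$ and $(1-r_{n+1})(M_{n+1}-M_n)\lesssim 1$, the estimate $\|\Delta_n^\om f\|_{H^p}\asymp M_p(r_{n+1},\Delta_n^\om f)$ follows from \cite[Lemma~10]{PelRathg} (equivalently the Marcinkiewicz multiplier theorem).

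I would then prove the two inequalities separately. For $\sum_n 2^{-n}\|\Delta_n^\om f\|_{H^p}^p\lesssim\|f\|_{A^p_\om}^p$ I combine the reverse localization with the uniform $H^p$-boundedness of the block projections (the M. Riesz projection theorem): since $(\Delta_n^\om f)_{r_{n+1}}=\Delta_n^\om(f_{r_{n+1}})$, one gets $\|\Delta_n^\om f\|_{H^p}\asymp M_p(r_{n+1},\Delta_n^\om f)\lesssim M_p(r_{n+1},f)$, and summing against $2^{-n}$ and reindexing returns $\sum_n 2^{-n}M_p^p(r_{n+1},f)\lesssim\|f\|_{A^p_\om}^p$. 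For the converse, the triangle inequality and the one-sided bound give $M_p(r_n,f)\le\sum_k M_p(r_n,\Delta_k^\om f)\le\sum_k r_n^{M_k}\|\Delta_k^\om f\|_{H^p}$, so the required inequality $\sum_n 2^{-n}M_p^p(r_n,f)\lesssim\sum_k 2^{-k}\|\Delta_k^\om f\|_{H^p}^p$ reduces to the $\ell^p$-boundedness of the nonnegative matrix $T_{n,k}=r_n^{M_k}\,2^{(k-n)/p}$. This I would verify by a Schur test: splitting into the regimes $k\le n$ and $n\le k$, the trivial bound $r_n^{M_k}\le1$ together with the geometric factor handles one regime, while the decay $r_n^{M_k}\lesssim e^{-c\,2^{(k-n)/\beta}}$ handles the other, so that both $\sup_n\sum_k T_{n,k}$ and $\sup_k\sum_n T_{n,k}$ are finite; interpolating the resulting $\ell^\infty$ and $\ell^1$ bounds gives $\|T\|_{\ell^p\to\ell^p}\lesssim1$.

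The step I expect to be the main obstacle is the reverse block localization $\|\Delta_n^\om f\|_{H^p}\lesssim M_p(r_{n+1},\Delta_n^\om f)$: this is exactly where the frequency cutoff defining a single block must be inverted on $H^p$, and it is the only place where the restriction $1<p<\infty$ is essential, entering through \cite[Lemma~10]{PelRathg} / the multiplier theorem. A secondary technical point is ensuring that the decay extracted from the doubling of $\widehat{\om}$ is strong enough --- super-geometric in $k-n$ --- to defeat the factor $2^{(k-n)/p}$ in the Schur test; this is precisely what the polynomial bound from \cite[Lemma~2.1]{PRAntequera} provides.
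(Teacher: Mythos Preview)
Your outline is correct. Note, however, that the paper does not supply its own proof of this statement: it is a ``letter theorem'' quoted from the literature, and the text simply says it follows from \cite[Proposition~11]{PRAntequera} (see also the proof of \cite[Theorem~4]{PelRathg}). Your argument is essentially a faithful reconstruction of the approach in those references: discretize the radial integral using the dyadic levels of $\widehat{\om}$ to get $\|f\|_{A^p_\om}^p\asymp\sum_n 2^{-n}M_p^p(r_n,f)$; use the M.~Riesz projection together with \cite[Lemma~10]{PelRathg} for the inequality $\|\Delta_n^\om f\|_{H^p}\lesssim M_p(r_{n+1},f)$; and for the converse run a Schur test on the matrix $r_n^{M_k}2^{(k-n)/p}$, where the super-geometric decay $r_n^{M_k}\lesssim e^{-c\,2^{(k-n)/\beta}}$ for $k\ge n$ comes from the polynomial growth estimate on $\widehat{\om}$ in \cite[Lemma~2.1]{PRAntequera}. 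There is nothing to add on the comparison side.
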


Theorem~\ref{th:dec} follows by \cite[Proposition~11]{PRAntequera},
see also the proof of \cite[Theorem~4]{PelRathg}. With these
preparations we are ready to prove the equivalence between (a) and
(c) in Theorem~\ref{Thm:g-is-constantintro}.

\medskip

\begin{Prf}{\em{Theorem~\ref{Thm:g-is-constantintro}}}.
The equivalence between (a) and (b)
 follows from
Corollary~\ref{le:XAinfty} below.  Suppose that (a) is satisfied and
assume on the contrary to the assertion that
    \begin{equation}\label{Eq:A-H-g-polynomial-bounded-condition}
    \int_0^1\frac{dr}{\widehat{\om}(r)^\frac1{p-1}}<\infty.
    \end{equation}
If $g(z)=z$, then Theorem~\ref{Thm-main-2}(i) states that $g\in T(A^p_\om,H^\infty)$
if and only if
    \begin{equation*}
    \begin{split}\label{eq:bi1}
    \sup_{z\in\D}\int_\D\left|\int_0^z\overline{B^\om_{\z}(w)}\,d\z\right|^{p'}\om(w)\,dA(w)<\infty,
    \end{split}
    \end{equation*}
where
    $$
    \int_0^z\overline{B^\om_{\z}(w)}\,d\z
    =\sum_{n=0}^\infty\frac{\overline{w}^n}{2\om_{2n+1}}\int_0^z\z^n\,d\z
    =\frac{z}{\overline{w}}(K^\om_w)'(z)
    $$
and $K^\om_w$ denotes the reproducing kernel of the Hilbert space
$\mathcal{D}_\om$ \cite[(2.8)]{PR2015}. Recall that~$\mathcal{D}_\om$ consists of $f\in\H(\D)$ such that
    $$
    \|f\|^2_{\mathcal{D}_\om}=\int_\D |f'(z)|^2\,\om(z)\,dA(z)+|f(0)|^2\om(\D)<\infty.
    $$
Consequently, \cite[Theorem~4(ii)]{PR2015} yields
    $$
    \left\| \int_0^z\overline{B^\om_{\z}(w)}\,d\z \right\|_{A^{p'}_v}^{p'}\asymp\int_0^{|z|}\frac{\widehat{v}(t)}{\widehat{\om}(t)^{p'}}\,dt,\quad |z|\to1^-,
    $$
for all $\om,v\in\DD$. In particular,
    $$
    \sup_{z\in\D}\int_\D\left|\int_0^z\overline{B^\om_{\z}(w)}\,d\z\right|^{p'}\om(w)\,dA(w)\asymp\int_0^1\frac{dr}{\widehat{\om}(r)^\frac1{p-1}}.
    $$
This contradicts (a), and thus we have shown that (a) implies (c).
In particular, by using these estimates and Lemma~\ref{polynomials},
we deduce that if $1<p<\infty$ and $\om\in\R$ such that
\eqref{Eq:A-H-g-polynomial-bounded-condition} is satisfied, then
$T(A^p_\om,H^\infty)$ contains all polynomials.

Assume now (c). To see that (a) is satisfied, by
Theorem~\ref{Thm-main-2}(i), it suffices to show that
    \begin{equation*}\label{eq:infty}
    \sup_{z\in\D}\int_\D\left|\int_0^zg'(\z)\overline{B^\om_{\z}(w)}\,d\z\right|^{p'}\om(w)\,dA(w)=\infty
    \end{equation*}
for each non-constant $g\in\H(\D)$. For $g(z)=\sum_{k=0}^\infty \widehat{g}(k)z^k\in\H(\D)$,
    \begin{equation*}
    \begin{split}
    g'(\z)\overline{B^\om_\z(w)}
    &=\left(\sum_{n=0}^\infty (n+1)\widehat{g}(n+1)\z^n\right)\left(\sum_{k=0}^\infty \frac{(\z\overline{w})^k}{2\om_{2k+1}}\right)\\
    &=\sum_{j=0}^\infty \left(\sum_{k=0}^j\frac{\overline{w}^k}{2\om_{2k+1}}(j-k+1)\widehat{g}(j-k+1)\right)\z^j,
    \end{split}
    \end{equation*}
and hence
    \begin{equation}
    \begin{split}\label{eq:bi2}
    \int_{0}^z g'(\z)\overline{B^\om_\z(w)}\,d\z
    &=\sum_{j=0}^\infty \left(\sum_{k=0}^j\frac{\overline{w}^k}{2\om_{2k+1}}(j-k+1)\widehat{g}(j-k+1)\right)\frac{z^{j+1}}{j+1}\\
    &=\sum_{k=0}^\infty \left(\sum_{j=k}^\infty\frac{j-k+1}{j+1}\widehat{g}(j-k+1)z^{j+1}\right) \frac{\overline{w}^k}{2\om_{2k+1}}\\
    &=\sum_{k=0}^\infty \left(\sum_{n=0}^\infty\frac{n+1}{n+k+1}\widehat{g}(n+1)z^{n+k+1}\right) \frac{\overline{w}^k}{2\om_{2k+1}}.
    \end{split}
    \end{equation}
By using \eqref{eq:bi2}, Theorem~\ref{th:dec} ,
\cite[Lemma~E]{PelRathg}(a) with $\lambda=\{\om^{-1}_{2k+1}\}$ and then
\cite[Lemma~E]{PelRathg}(b) with $\lambda=\{|z|^k\}$, we deduce
    \begin{equation*}
    \begin{split}
    &\int_\D\left|\int_0^zg'(\z)\overline{B^\om_{\z}(w)}\,d\z\right|^{p'}\om(w)\,dA(w)\\
    &=\int_\D\left|\sum_{k=0}^\infty \left(\sum_{n=0}^\infty\frac{n+1}{{n+k+1}}\overline{\widehat{g}(n+1)}\overline{z}^{n+k+1}\right) \frac{w^k}{2\om_{2k+1}}\right|^{p'}\om(w)\,dA(w)\\
    &\asymp\sum_{j=0}^\infty 2^{-j}\left\|\sum_{k\in I_\om(j)}\left(\sum_{n=0}^\infty\frac{n+1}{{n+k+1}}\overline{\widehat{g}(n+1)}\overline{z}^{n+k+1}\right) \frac{w^k}{2\om_{2k+1}}\right\|^{p'}_{H^{p'}}\\
    &\gtrsim\sum_{j=0}^\infty\frac{2^{-j}}{(\om_{2M_j+1})^{p'}}
    \left\|\sum_{k\in I_\om(j)}\left(\sum_{n=0}^\infty\frac{n+1}{{n+k+1}}\overline{\widehat{g}(n+1)}\overline{z}^{n+1}\right)(\overline{z}w)^k \right\|^{p'}_{H^{p'}}\\
    &=\sum_{j=0}^\infty\frac{2^{-j}}{(\om_{2M_j+1})^{p'}}
    \left\|\sum_{k\in I_\om(j)}\left(\sum_{n=0}^\infty\frac{n+1}{{n+k+1}}\overline{\widehat{g}(n+1)}\overline{z}^{n+1}\right) (|z|w)^k \right\|^{p'}_{H^{p'}}\\
    &\gtrsim\sum_{j=0}^\infty\frac{2^{-j}}{(\om_{2M_j+1})^{p'}}
    \left\|\sum_{k\in I_\om(j)}\left(\sum_{n=0}^\infty\frac{n+1}{{n+k+1}}\overline{\widehat{g}(n+1)}\overline{z}^{n+1}\right) w^k \right\|^{p'}_{H^{p'}}|z|^{M_{j+1}p'},
    \end{split}
    \end{equation*}
and hence Fubini's theorem yields
    \begin{equation*}
    \begin{split}
    &\sup_{z\in\D}\int_\D\left|\int_0^zg'(\z)\overline{B^\om_{\z}(w)}\,d\z\right|^{p'}\om(w)\,dA(w)\\
    &\gtrsim\sup_{z\in\D}\sum_{j=0}^\infty\frac{2^{-j}}{(\om_{2M_j+1})^{p'}}
    \left\|\sum_{k\in I_\om(j)}\left(\sum_{n=0}^\infty\frac{n+1}{{n+k+1}}\overline{\widehat{g}(n+1)}\overline{z}^{n+1}\right) w^k \right\|^{p'}_{H^{p'}}|z|^{M_{j+1}p'}\\
    &=\sup_{0<r<1}\sup_{t\in[-\pi,\pi)}
    \sum_{j=0}^\infty\frac{2^{-j}}{(\om_{2M_j+1})^{p'}}
    \left\|\sum_{k\in I_\om(j)}\left(\sum_{n=0}^\infty\frac{n+1}{{n+k+1}}\overline{\widehat{g}(n+1)}(re^{it})^{n+1}\right) w^k \right\|^{p'}_{H^{p'}}r^{M_{j+1}p'}\\
    &\ge\sup_{0<r<1}\frac{1}{2\pi}\int_{-\pi}^\pi\left(
    \sum_{j=0}^\infty\frac{2^{-j}}{(\om_{2M_j+1})^{p'}}
    \left\|\sum_{k\in I_\om(j)}\left(\sum_{n=0}^\infty\frac{n+1}{{n+k+1}}\overline{\widehat{g}(n+1)}(re^{it})^{n+1}\right) w^k \right\|^{p'}_{H^{p'}}r^{M_{j+1}p'}\right)dt\\
    &=\sup_{0<r<1}\sum_{j=0}^\infty\frac{2^{-j}}{(\om_{2M_j+1})^{p'}}\left(\frac{1}{(2\pi)^2}\int_{-\pi}^\pi
    \int_{-\pi}^\pi\left|\sum_{k\in I_\om(j)}\left(\sum_{n=0}^\infty\frac{n+1}{{n+k+1}}\overline{\widehat{g}(n+1)}(re^{it})^{n+1}\right) e^{ik\theta} \right|^{p'}\,dt\,d\theta\right)r^{M_{j+1}p'}\\
    &=\sup_{0<r<1}
    \sum_{j=0}^\infty \frac{2^{-j}}{(\om_{2M_j+1})^{p'}}\left(\frac{1}{(2\pi)^2}\int_{-\pi}^\pi
    \int_{-\pi}^\pi\left|\Gamma_\theta(re^{it}) \right|^{p'}\,dt\,d\theta\right)r^{M_{j+1}p'},
    \end{split}
    \end{equation*}
where $\Gamma_\theta(re^{it})=\sum_{n=0}^\infty(n+1)\overline{\widehat{g}(n+1)} \left( \sum_{k\in I_\om(j)}\frac{e^{ik\theta} }{n+k+1}\right) (re^{it})^{n+1}$. Since $g$ is non-constant, there exists $N\in\N\cup\{0\}$ such
that $|\widehat{g}(N+1)|>0$. Hence the M.~Riesz projection theorem
(or H\"older's inequality and the Cauchy integral formula) yields
    \begin{equation*}
    \begin{split}
    &\sup_{z\in\D}\int_\D\left|\int_0^zg'(\z)\overline{B^\om_{\z}(w)}\,d\z\right|^{p'}\om(w)\,dA(w)\\
    &\gtrsim(N+1)^{p'}|\widehat{g}(N+1)|^{p'}\sup_{0<r<1}r^{(N+1)p'}\sum_{j=0}^\infty\frac{2^{-j}r^{M_{j+1}p'}}{(\om_{2M_j+1})^{p'}}\frac{1}{2\pi}\int_{-\pi}^\pi
     \left|\sum_{k\in I_\om(j)}\frac{e^{ik\theta} }{N+k+1}\right|^{p'}\,d\theta.
    \end{split}
    \end{equation*}
By using \cite[Lemma~E]{PelRathg}(b) and Lemma~\ref{le:Deltan},
with $n_1=M_j$ and $n_2=M_{j+1}$, together with \cite[Lemma~6]{PelRathg}, we deduce
    \begin{equation*}
    \begin{split}
    &\frac{1}{2\pi}\int_{-\pi}^\pi
    \left| \sum_{k\in I_\om(j)}\frac{e^{ik\theta} }{N+k+1}\right|^{p'}\,d\theta
    \gtrsim\frac{1}{(N+M_{j+1})^{p'}}\frac{1}{2\pi}\int_{-\pi}^\pi
    \left|\sum_{k\in I_\om(j)}e^{ik\theta} \right|^{p'}\,d\theta\\
    &\qquad =\frac{1}{(N+M_{j+1})^{p'}}\left\|\Delta^\om_j\left(\frac{1}{1-z}\right)\right\|^{p'}_{H^{p'}}
    \asymp\frac{M_{j+1}^{p'-1}}{(N+M_{j+1})^{p'}}\asymp
    M_{j+1}^{-1}.
    \end{split}
    \end{equation*}
Consequently,
    \begin{equation*}
    \begin{split}
    \sup_{z\in\D}\int_\D\left|\int_0^zg'(\z)\overline{B^\om_{\z}(w)}\,d\z\right|^{p'}\om(w)\,dA(w)
    \gtrsim(N+1)^{p'}|\widehat{g}(N+1)|^{p'}\sum_{j=0}^\infty\frac{2^{-j}}{(\om_{2M_j+1})^{p'}M_{j+1}}.
    \end{split}
    \end{equation*}
Now \cite[Lemma~6]{PelRathg}, (see also \cite[Lemma~1]{PRAntequera})
and \eqref{rn} yield
    \begin{equation*}
    \begin{split}
    \infty&=\int_0^1\frac{dt}{\widehat{\om}(t)^{p'-1}}
    =\sum_{j=0}^\infty\int_{1-\frac1{M_j}}^{1-\frac1{M_{j+1}}}\frac{dt}{\widehat{\om}(t)^{p'-1}}
    \le\sum_{j=0}^\infty\frac{1}{\widehat{\om}\left(1-\frac{1}{M_{j+1}}\right)^{p'-1}}\left(\frac1{M_j}-\frac1{M_{j+1}}\right)\\
    &\asymp\sum_{j=0}^\infty\frac{1}{\widehat{\om}\left(1-\frac{1}{M_{j+1}}\right)^{p'-1}M_{j+1}}
    \asymp\sum_{j=0}^\infty\frac{2^{-j}}{(\om_{2M_j+1})^{p'}M_{j+1}},
    \end{split}
    \end{equation*}
and we deduce that for each non-constant analytic symbol $g$, the operator $T_g$ does not map $A^p_\om$ into
 $H^\infty$ if  $\int_0^1(\widehat{\om}(t))^{1-p'}dt$ diverges.
\end{Prf}

\subsection{The space $T(H^\infty,H^\infty)$}

The aim of this section is to provide further information related to
the space of symbols $g$ that induce bounded operators $T_g:H^\infty\to H^\infty$. Recall that
$X\subset\H(\D)$ endowed with a seminorm $\rho$ is called
conformally invariant or M\"{o}bius invariant if there exists a
constant $C>0$ such that
    $$
    \sup_{\vp\in\M}\rho(g\circ\varphi)\le C\rho(g),\quad g\in X,
    $$
where $\M$ is the set of all M\"{o}bius transformations $\vp$ of
$\D$ onto itself. Classical examples of conformally invariant spaces
are BMOA, the Bloch space $\mathcal B$ and $H^\infty$.

\begin{proposition} The Banach space $T(H^\infty, H^\infty)$
equipped with the norm $\|g\|_*=\|T_g\|_{(H^\infty,
H^\infty)}+|g(0)|$ has the following properties:
\begin{enumerate}
\item[(i)]
 Polynomials are not dense in $T(H^\infty,
H^\infty)$;
\item[(ii)] There exists $g\in T(H^\infty, H^\infty)$ such that $g_r\not\to g$ in the norm of $T(H^\infty,
H^\infty)$;
\item[(iii)] $T(H^\infty, H^\infty)$ endowed with the seminorm
    $\|T_g\|_{(H^\infty,
H^\infty)}$ is conformally invariant. Indeed,
    $$
    \sup_{\vp\in\M}\|T_{g\circ\vp}\|_{H^\infty\to
H^\infty}\le 2\| T_g\|_{H^\infty\to H^\infty}.
    $$
    \end{enumerate}
\end{proposition}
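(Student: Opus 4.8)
The plan is to derive all three parts from a single elementary norm estimate together with the known existence of a symbol outside the disc algebra that induces a bounded operator.

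First I would record the key inequality
$$\nm{h}_{H^\infty}\le\nm{h}_*,\qquad h\in T(H^\infty,H^\infty).$$
Since $T_h(1)(z)=\int_0^z h'(\z)\,d\z=h(z)-h(0)$ and the constant function $1$ has unit $H^\infty$-norm, we get $\nm{T_h}_{H^\infty\to H^\infty}\ge\nm{T_h(1)}_{H^\infty}=\nm{h-h(0)}_{H^\infty}$, whence the triangle inequality yields $\nm{h}_{H^\infty}\le\nm{h-h(0)}_{H^\infty}+\abs{h(0)}\le\nm{h}_*$. In particular, convergence in $\nm{\cdot}_*$ forces uniform convergence on $\D$.

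For (i) and (ii) I would combine this with \cite[Proposition~2.12]{AJS}, which provides some $g_0\notin A$ with $T_{g_0}:H^\infty\to H^\infty$ bounded, i.e. $g_0\in T(H^\infty,H^\infty)\setminus A$. If $\{p_n\}$ were polynomials with $\nm{p_n-g_0}_*\to0$, the key inequality would give $p_n\to g_0$ uniformly on $\D$; as each $p_n$ extends continuously to $\overline{\D}$ and the convergence is uniform, the limit would lie in $A$, contradicting the choice of $g_0$. Hence $g_0$ is not approximable by polynomials, which proves (i). For (ii) I would use the same $g_0$: each dilate $(g_0)_r$ is analytic on a neighborhood of $\overline{\D}$, hence lies in $A\cap T(H^\infty,H^\infty)$, and $(g_0)_r(0)=g_0(0)$. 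If $(g_0)_r\to g_0$ in $\nm{\cdot}_*$, the key inequality would again give $(g_0)_r\to g_0$ uniformly on $\D$, forcing $g_0\in A$, a contradiction.

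For (iii) I would argue by a change of variables. Noting $(g\circ\vp)'=(g'\circ\vp)\,\vp'$ and substituting $u=\vp(\z)$ (legitimate since $\vp\in\M$ is analytic on $\D$ and maps the segment of integration into $\D$),
$$T_{g\circ\vp}(f)(z)=\int_{\vp(0)}^{\vp(z)}(f\circ\vp^{-1})(u)\,g'(u)\,du=T_g(f\circ\vp^{-1})(\vp(z))-T_g(f\circ\vp^{-1})(\vp(0)).$$
Because $\vp$ is an automorphism of $\D$, one has $\nm{T_g(f\circ\vp^{-1})\circ\vp}_{H^\infty}=\nm{T_g(f\circ\vp^{-1})}_{H^\infty}$ and $\abs{T_g(f\circ\vp^{-1})(\vp(0))}\le\nm{T_g(f\circ\vp^{-1})}_{H^\infty}$, so $\nm{T_{g\circ\vp}(f)}_{H^\infty}\le 2\,\nm{T_g(f\circ\vp^{-1})}_{H^\infty}$. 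Since $\vp^{-1}$ is a bijection of $\D$, $\nm{f\circ\vp^{-1}}_{H^\infty}=\nm{f}_{H^\infty}$, and therefore $\nm{T_{g\circ\vp}(f)}_{H^\infty}\le 2\,\nm{T_g}_{H^\infty\to H^\infty}\nm{f}_{H^\infty}$; taking the supremum over $\nm{f}_{H^\infty}\le1$ and then over $\vp\in\M$ gives the displayed bound and hence conformal invariance.

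The computations are short, so I expect no serious obstacle; the only point demanding care is the change-of-variables identity in (iii), and in particular the bookkeeping that produces the additive term $T_g(f\circ\vp^{-1})(\vp(0))$, which is precisely the source of the constant $2$.
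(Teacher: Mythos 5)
Your proof is correct and follows essentially the same route as the paper: parts (i) and (ii) rest on the lower bound $\nm{g-g(0)}_{H^\infty}\le\nm{T_g}_{H^\infty\to H^\infty}$ (obtained by testing on the constant function $1$) together with the symbol from \cite[Proposition~2.12]{AJS} lying outside $A$, and part (iii) is the same change-of-variables computation. The only cosmetic difference is in (iii): you work with a general $\vp\in\M$ and its inverse directly, whereas the paper first treats the involutions $\vp_a(z)=\frac{a-z}{1-\overline{a}z}$ and then absorbs the unimodular rotation via $\nm{T_{g\xi}}=\nm{T_g}$; both yield the same constant $2$.
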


\begin{proof} (i) Assume that polynomials are dense in $T(H^\infty, H^\infty)$.
By \cite[Proposition~2.12]{AJS}, there exists $g\in T(H^\infty, H^\infty)\setminus A$. Let $\{p_n\}$ be a sequence of
polynomials such that $\| T_{g-p_n}\|_{H^\infty\to H^\infty}\to0$,
as $n\to\infty$. Since $\| g-p_n\|_{H^\infty}\le C \|
T_{g-p_n}\|_{H^\infty\to H^\infty}$ and $A$ is closed in $H^\infty$,
we deduce $g\in A$. This leads to  a contradiction and therefore
polynomials are not dense in $T(H^\infty, H^\infty)$.

(ii) The argument employed in (i) gives the assertion.

(iii) Take $g\in T(H^\infty,H^\infty)$, and for each $a\in\D$ let
$\vp_a(z)=\frac{a-z}{1-\overline{a}z}$. Then, for each $z\in\D$, we
have
    \begin{equation*}
    \begin{split}
    T_{g\circ \vp_a}(f)(\vp_a(z))
    &=\int_0^{\vp_a(z)}g'(\vp_a(\z))\vp'_a(\z)f(\z)\,d\z
    =\int_a^{z}g'(u)(f\circ\vp_a)(u)\,du\\
    &=-\int_0^{a}g'(u)(f\circ\vp_a)(u)\,du+\int_0^{z}g'(u)(f\circ\vp_a)(u)\,du\\
    &=-T_g(f\circ\vp_a)(a)+T_g(f\circ\vp_a)(z).
    \end{split}
    \end{equation*}
Therefore
    \begin{equation*}
    \begin{split}
    \|T_{g\circ \vp_a}(f)\|_{H^\infty}
    &=\sup_{z\in\D}\left| T_{g\circ \vp_a}(f)(z)\right|
    =\sup_{z\in\D}\left| T_{g\circ \vp_a}(f)(\vp_a(z))\right|\\
    &\le|T_g(f\circ\vp_a)(a)|+\sup_{z\in\D}\left| T_g(f\circ\vp_a)(z)\right|
    \le2\| T_g(f\circ\vp_a)\|_{H^\infty}\\
    &\le2\| T_g\|_{H^\infty\to H^\infty}\| f\|_{H^\infty},
    \end{split}
    \end{equation*}
and since $\|T_{gh}\|_{H^\infty\to H^\infty}=\|
T_{g}\|_{H^\infty\to H^\infty}$ for any constant function
$h\equiv\xi\in\T$, the assertion follows.
\end{proof}

The space $\BRV$ of analytic functions
with bounded radial variation consists of $g\in\H(\D)$ such that
    $$
    \sup_{0\le\theta<2\pi}V(g,\theta)=\sup_{0\le\theta<2\pi}\int_0^1|g'(te^{i\theta}|\,dt<\infty.
    $$
In \cite{AJS} the authors made an extensive study of the spaces
$T(H^\infty, H^\infty)$  and $T_c(H^\infty, H^\infty)$, in
particular they observed that $\BRV\subset T(H^\infty, H^\infty)$.
We do not know if this inclusion is strict, but as for
this question, we offer the following result.

\begin{theorem} Let $g\in\B$. Assume that there exists $l>0$ and a dense set $E\subset\T$ such that, for each $\xi\in E$, there exists a Jordan arc $\Gamma_\zeta$ in $\D\cup\{\xi\}$ from $0$ to $\xi$ with
    $$
    \int_{\Gamma_\xi}|g'(s)|\,|ds|\le l.
    $$
Then $T_g:H^\infty\to H^\infty$ is bounded.
\end{theorem}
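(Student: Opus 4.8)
The plan is to estimate $\lvert T_g(f)(z)\rvert$ directly for each fixed $z\in\D$ by integrating along the Jordan arc supplied by the hypothesis, rather than along the radial segment $[0,z]$. Since $T_g(f)$ is analytic, $\sup_{z\in\D}\lvert T_g(f)(z)\rvert=\sup_{\xi\in E}\limsup_{s\to\xi}\lvert T_g(f)(s)\rvert$ because $E$ is dense in $\T$ and $T_g(f)$ is bounded iff its boundary behavior is controlled; more precisely I would fix $z\in\D$, pick $\xi\in E$ close to the radial projection of $z$, and use the arc $\Gamma_\xi$ as a deformed contour from $0$ to (a point near) $\xi$. The point evaluation $T_g(f)(z)=\int_0^z g'(\zeta)f(\zeta)\,d\zeta$ is independent of the path since the integrand is analytic, so I may replace the straight segment by a path that first runs along $\Gamma_\xi$ and then makes a short correction back to $z$.

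The key computation is the following. Along $\Gamma_\xi$ we have the crude bound
\begin{equation*}
\left\lvert\int_{\Gamma_\xi}g'(s)f(s)\,ds\right\rvert\le \|f\|_{H^\infty}\int_{\Gamma_\xi}\lvert g'(s)\rvert\,\lvert ds\rvert\le l\,\|f\|_{H^\infty},
\end{equation*}
which already controls the contribution of the arc by $l\|f\|_{H^\infty}$ uniformly in $\xi\in E$. The remaining issue is the correction term joining the endpoint of (a truncation of) $\Gamma_\xi$ to the target point $z$: here I would exploit $g\in\B$. Since $g\in\B$ gives $\lvert g'(w)\rvert\lesssim \|g\|_{\B}/(1-\lvert w\rvert)$, a short segment that stays at distance $\asymp 1-\lvert z\rvert$ from $\T$ contributes at most $\lvert g'\rvert\cdot(\text{length})\lesssim \|g\|_{\B}\|f\|_{H^\infty}$, which is again uniformly bounded. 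Combining the arc estimate and the Bloch-controlled correction yields $\lvert T_g(f)(z)\rvert\lesssim (l+\|g\|_{\B})\|f\|_{H^\infty}$ for all $z$ whose argument is close to some $\xi\in E$, and density of $E$ plus continuity of $T_g(f)$ extends this to all $z\in\D$.

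The main obstacle is making the contour-deformation argument rigorous near the boundary: the arcs $\Gamma_\xi$ lie in $\D\cup\{\xi\}$ and may approach $\xi$ tangentially or wildly, so one cannot simply connect the arc's endpoint to an arbitrary interior point $z$ without controlling how far that connecting segment must travel and how close it comes to $\T$. I expect the technical heart to be choosing, for each $z$, an appropriate truncation $\Gamma_\xi\cap\{\lvert w\rvert\le \lvert z\rvert\}$ together with a level-set or radial correction segment on which the Bloch estimate $\lvert g'\rvert\lesssim 1/(1-\lvert w\rvert)$ integrates to an $O(1)$ quantity. Once the geometry is set up so that the correction path has length comparable to $1-\lvert z\rvert$ at distance comparable to $1-\lvert z\rvert$ from $\T$, the estimates close and boundedness of $T_g:H^\infty\to H^\infty$ follows; equivalently one may phrase the conclusion through Theorem~\ref{Thm:main-1}(ii) by verifying the resulting bound on $\sup_{z\in\D}\|G^{H^2}_{g,z}\|_{\K}$, but the direct path-integral estimate seems the cleanest route.
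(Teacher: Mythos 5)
The first half of your argument is exactly how the paper begins: by path--independence, for $w$ on the arc one has $|T_g(f)(w)|\le\int_{\Gamma_\xi}|f(s)||g'(s)|\,|ds|\le l\|f\|_{H^\infty}$. The gap is the ``correction segment''. For a general Jordan arc there is no reason why $\Gamma_\xi$ should come within distance $O(1-|z|)$ of a given $z$ at a point whose own distance to $\T$ is comparable to $1-|z|$: the arc is only required to join $0$ to $\xi$ inside $\D$, and it may, for example, descend quickly to depth $\e\ll 1-|z|$ and hug the circle before reaching $\xi$. If you exit the arc at such a point $w_0$ with $1-|w_0|=\e$, the connecting segment to $z$ has length $\gtrsim 1-|z|$ and the Bloch bound only gives $\int|g'|\,|ds|\lesssim\|g\|_{\B}\log\frac{1-|z|}{\e}$, which is not uniformly bounded; exiting at a deeper point of the arc instead forces a long connecting path with $\int\frac{|ds|}{1-|s|}\asymp\log\frac{1}{1-|z|}$. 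So the estimate you need for the correction term does not close, and the obstacle you flag as ``the technical heart'' is precisely the part of the proof that is missing.

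The paper gets around this with a genuinely different tool. Since $h=T_g(f)$ satisfies $|h'(z)|(1-|z|^2)=|f(z)g'(z)|(1-|z|^2)\le\|f\|_{H^\infty}\|g\|_{\B}$, it is a Bloch function, and the Anderson--Clunie--Pommerenke distance estimate (\cite[Theorem~4.2(iii)]{Po92}) yields $\mathrm{dist}\,(h(z),h(\Gamma_\xi))\le\frac{e\pi}{4}\|f\|_{H^\infty}\|g\|_{\B}$ for every $z$ in the lens $L(\xi)$ with vertices $0$ and $\xi$, after decomposing $L(\xi)$ into domains $G_n$ with $\partial G_n\subset\Gamma_\xi\cup\partial L$. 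This requires no geometric control on the arc at all and immediately gives $|h(z)|\le\bigl(l+\frac{e\pi}{4}\|g\|_{\B}\bigr)\|f\|_{H^\infty}$ on $L(\xi)$; since the lenses $L(\xi)$, $\xi\in E$, cover $\D\setminus\{0\}$ by density of $E$, the boundedness follows. Replacing your correction segment by this Bloch distance estimate is the missing idea.
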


\begin{proof} Let $f\in H^\infty$ be fixed and $h=T_g(f)$. Then $h\in\B$ with $\|h\|_\B\le\|f\|_{H^\infty}\|g\|_{\B}$.
Fix $\xi\in E$, and let $L=L(\xi)$ denote the lens, with vertexes at
$\xi$ and 0, formed by two circular arcs such that they meet $\T$ at
the angle $\pi/4$. If $z\in \Gamma_\xi$, then
    $$
    |h(z)|\le\int_{\Gamma_\zeta} |f(s)| |g'(s)|\, |ds|\leq \Vert
    f\Vert_\infty l
    $$
by the hypothesis. The curve $\Gamma_\xi$ intersects $\partial L$ at
$0$, $\xi$, and, perhaps, in another points. In any case we can
build domains $G_n$ such that $L\subset(\Gamma_\xi\cup_nG_n)$ and
$\partial G_n\subset(\Gamma_\xi\cup\partial L)$. Then
\cite[Theorem~4.2(iii)]{Po92}, see also~\cite{ACP}, gives
    $
    \textrm{dist}\,(h(z),h(\Gamma_\xi))\le\frac{e\pi}{4}\|f\|_{H^\infty}\|g\|_{\B}
    $
for all $z\in L$, and hence
    $$
    |h(z)|\le\sup_{t\in \Gamma_\xi}|h(t)|+\textrm{dist}\,
    (h(z),h(\Gamma_\xi))\le\left(l+\frac{e\pi}{4}\|g\|_{\B}\right)\|f\|_{H^\infty},\quad z\in L.
    $$
Now that $E$ is dense in $\T$, we have $\D\setminus\{
0\}=\cup_{\xi\in E}L(\xi)$, and thus $T_g:H^\infty\to H^\infty$ is
bounded.
\end{proof}

Regarding sufficient conditions,  it is known that $T_g:H^\infty\to
H^\infty$ is compact if $g\in \mathrm{BRV}_0$~\cite[Proposition
3.4]{AJS}. Recall that $\BRV_0$ is the Banach space of $g\in \H(\D)$
such that $g'$ is uniformly integrable on radii, that is, for each
$\varepsilon>0$, there exists $r=r(\e)\in(0,1)$ for which
    $$
    \int_r^1|g'(te^{i\theta})|dt\le\varepsilon, \quad 0\le\theta<2\pi.
    $$
It is clear that
    \begin{equation}\label{BRVT}
    \BRV_0\subset\BRV\subset T(H^\infty, H^\infty)\subset H^\infty.
    \end{equation}
The following result shows that the lacunary series with Hadamard
gaps are the same in all above-mentioned spaces of analytic
functions.

\begin{letterproposition}\label{pro:lacunary}
Let $g\in\mathcal{H}(\D)$ be a lacunary series with Hadamard gaps,
that is, $g(z)=a_0+\sum_{k=1}^\infty a_k z^{n_k}$ with
$\frac{n_{k+1}}{n_k}\ge\lambda>1$ for all $k\in\N$. Then the
following statements are equivalent:
\begin{enumerate}
\item[\rm(a)]$g\in\BRV_0$;
\item[\rm(b)]$g\in\BRV$;
\item[\rm(c)]$g\in H^\infty$;
\item[\rm(d)]$\sum_k|a_k|<\infty$;
\item[\rm(e)]$T_g:H^\infty \to H^\infty$ is compact;
\item[\rm(f)]$T_g:H^\infty \to H^\infty$ is bounded.
\end{enumerate}
\end{letterproposition}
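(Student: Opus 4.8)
The plan is to establish a single cycle of implications passing through five of the six conditions, and then to attach the compactness statement~(e) at the end. Three of the implications come for free from the chain of inclusions~\eqref{BRVT}: the inclusion $\BRV_0\subset\BRV$ gives (a)$\Rightarrow$(b), the inclusion $\BRV\subset T(H^\infty,H^\infty)$ gives (b)$\Rightarrow$(f), and $T(H^\infty,H^\infty)\subset H^\infty$ gives (f)$\Rightarrow$(c). Thus it remains only to close the loop by proving (c)$\Rightarrow$(d) and (d)$\Rightarrow$(a), after which I will slot in~(e) using already available facts.

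For (c)$\Rightarrow$(d) I would invoke the classical theory of Hadamard lacunary series. Since $g\in H^\infty$, its boundary function lies in $L^\infty(\T)$ and has spectrum contained in the Hadamard gap sequence $\{n_k\}$, which is a Sidon set. Sidon's theorem then forces the Fourier (equivalently Taylor) coefficients to be absolutely summable, that is, $\sum_k|a_k|<\infty$ (see, e.g., Zygmund's treatise on trigonometric series). I expect this to be the main obstacle, in the sense that it is the only step not reducible to an inclusion or a direct estimate; everything else is elementary once this classical input is granted.

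The implication (d)$\Rightarrow$(a) is a direct estimate. Differentiating term by term, $g'(z)=\sum_k a_k n_k z^{n_k-1}$, so for every $\theta$ and every $r\in(0,1)$,
\begin{equation*}
\int_r^1|g'(te^{i\theta})|\,dt\le\sum_k|a_k|\,n_k\int_r^1 t^{n_k-1}\,dt=\sum_k|a_k|\,(1-r^{n_k}).
\end{equation*}
The right-hand side is independent of $\theta$ and, being dominated by the convergent series $\sum_k|a_k|$, tends to $0$ as $r\to1^-$ by dominated convergence. This is exactly the uniform integrability on radii defining $\BRV_0$, so $g\in\BRV_0$.

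Finally, the cycle $(d)\Rightarrow(a)\Rightarrow(b)\Rightarrow(f)\Rightarrow(c)\Rightarrow(d)$ renders (a), (b), (c), (d), (f) mutually equivalent. To incorporate~(e), I would use that $g\in\BRV_0$ implies $T_g:H^\infty\to H^\infty$ is compact by \cite[Proposition~3.4]{AJS}, giving (a)$\Rightarrow$(e), while (e)$\Rightarrow$(f) is trivial since compactness implies boundedness. As (a) and (f) already belong to the same equivalence class, (e) joins it, completing the proof.
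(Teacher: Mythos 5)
Your proof is correct and follows essentially the same route as the paper: the same cycle through the inclusion chain \eqref{BRVT}, Sidon's theorem from Zygmund for (c)$\Rightarrow$(d), and \cite[Proposition~3.4]{AJS} to bring in compactness. The only (harmless) difference is that you prove (d)$\Rightarrow$(a) by a direct term-by-term estimate, whereas the paper simply cites \cite[Proposition~3.4]{AJS} for that implication as well.
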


\begin{proof}
By \cite[Vol I p.~247]{Zygmund59}, (c) implies (d), and (d) implies
(a) by \cite[Proposition~3.4]{AJS}. Hence the first four statements
are equivalent by the chain of inclusions \eqref{BRVT}. If
$g\in\BRV_0$, then $T_g:H^\infty \to H^\infty$ is compact by
\cite[Proposition~3.4]{AJS}. It follows that (e) and (f) are
equivalent to the first four statements by \eqref{BRVT}.
\end{proof}

Next we study the relationship between integral and Hankel
operators.
Given $\phi\in L^\infty(\T$) and a polynomial $f$, consider the
Hankel operator $H_\phi(f)=(I-P)(\phi f)$, where~$P$ is the Szeg\"o projection. If $1<p<\infty$ and
$g\in\H(\D)$, $H_{\overline{g}}$ and $T_g$ are simultaneously
bounded from $H^p$ to $L^p(\T)$, and this happens if and only if
$g\in\BMOA$ \cite{AC,Pell}. This is no longer true for  the extreme
points $p=1$ and $p=\infty$. For $p=1$ this fact follows
by~\cite{AC,JPSDuke84}. 
The next result deals with the case
$p=\infty$. Denote by $H(H^\infty,H^\infty)$ the set of symbols $g\in\H(\D)$ such that $H_{\overline{g}}:H^\infty\to H^\infty$ is bounded.

\begin{proposition}\label{singular}
The atomic singular inner function
$S(z)=\exp\left(\frac{z+1}{z-1}\right)$ belongs to $\BRV\setminus
H(H^\infty,H^\infty)$.
\end{proposition}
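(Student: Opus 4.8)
The statement splits into two independent assertions: that $S$ has bounded radial variation, and that the Hankel operator $H_{\overline S}$ fails to be bounded on $H^\infty$. For the first, the plan is a direct computation. Since $S(z)=\exp\psi(z)$ with $\psi(z)=\frac{z+1}{z-1}$, one has $S'(z)=-\frac{2}{(z-1)^2}S(z)$ and $|S(z)|=\exp\frac{|z|^2-1}{|z-1|^2}$, so that
\[
V(S,\theta)=\int_0^1|S'(te^{i\theta})|\,dt=\int_0^1\frac{2}{|te^{i\theta}-1|^2}\exp\!\left(\frac{t^2-1}{|te^{i\theta}-1|^2}\right)dt.
\]
The key idea is to read this as an arc-length integral in the $w$-plane. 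Because $\frac{d}{dt}\psi(te^{i\theta})$ has modulus $2/|te^{i\theta}-1|^2$ and real part equal to the exponent above, the substitution $w=\psi(te^{i\theta})$ turns $V(S,\theta)$ into $\int_{\gamma}e^{\Real w}\,|dw|$, where $\gamma=\psi(\{te^{i\theta}:0<t<1\})$ is a circular arc inside $\{\Real w<0\}$ from $\psi(0)=-1$ to $\psi(e^{i\theta})=-i\cot(\theta/2)$, lying on the circle centred at $-i\cot\theta$ of radius $1/\sin\theta$. Parametrising this arc by angle I would obtain, for $\theta\in(0,\pi)$,
\[
V(S,\theta)=\frac{1}{\sin\theta}\int_0^{\pi-\theta}\exp\!\left(-\frac{\sin\alpha}{\sin\theta}\right)d\alpha,
\]
and then the concavity estimate $\sin\alpha\ge\frac{2}{\pi}\min(\alpha,\pi-\alpha)$, applied on $[0,\pi/2]$ and on $[\pi/2,\pi-\theta]$ separately, yields $V(S,\theta)\le\pi$ uniformly. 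Symmetry under $\theta\mapsto-\theta$ (the Taylor coefficients of $S$ are real) together with continuity at $\theta=0,\pi$ then gives $\sup_\theta V(S,\theta)\le\pi$, so $S\in\BRV$.

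For the second assertion I would first recast the operator. On $\T$ one has $|S|=1$, so $M_{\overline S}$ is an isometry of $L^\infty(\T)$, and for analytic $f$ the Riesz decomposition reads $\overline S f=T_{\overline S}f+H_{\overline S}f$, where $T_{\overline S}f=P(\overline S f)$ is analytic and $\|\overline S f\|_{L^\infty}=\|f\|_\infty$. Hence $\sup_{\|f\|_\infty\le1}\|H_{\overline S}f\|_{L^\infty}$ and $\sup_{\|f\|_\infty\le1}\|T_{\overline S}f\|_{H^\infty}$ are finite or infinite together; equivalently, $S\in H(H^\infty,H^\infty)$ if and only if the Szeg\"o projection $P$ is uniformly bounded on the images $\overline S f$. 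Writing $H_{\overline S}f=(I-P)M_{\overline S}f=[M_{\overline S},P]f$ exhibits $H_{\overline S}$ as a commutator with $P$, and the obstruction to its boundedness is precisely the classical failure of $P$ (the conjugate-function operator) to act on $L^\infty$ near a discontinuity of the symbol. The singularity of $S$ is concentrated at $z=1$; passing to the upper half-plane via the Cayley map $e^{i\theta}\mapsto x=\tan(\theta/2)$ turns $S$ into $e^{-i/x}$, whose essential singularity at $x=0$ is the engine of the unboundedness.

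Concretely, the plan is to produce $f_N\in H^\infty$ with $\|f_N\|_\infty\le1$ and $\|H_{\overline S}f_N\|_{L^\infty}\gtrsim\log N$. I would build $f_N$ as a superposition of reproducing-kernel–type bumps localised at the dyadic scales $1-2^{-j}$, $1\le j\le N$, approaching $z=1$, with phases tuned to the oscillation of $\overline S$ so that their contributions to $(I-P)(\overline S f_N)$ add coherently, mimicking the partial sums $\sum_{j\le N}e^{ij\theta}/j$ of the conjugate Dirichlet kernel, whose sup norms grow like $\log N$, while near-disjointness of the localisations keeps $\|f_N\|_\infty$ bounded. I expect this coherent-superposition estimate to be the main obstacle: one must simultaneously control the $H^\infty$-norm of the superposition and extract a genuinely growing lower bound for the sup norm of the anti-analytic projection, handling the interaction between the bumps and the essential singularity of $S$ with care. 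A single localised bump yields only an $O(1)$ bound, so the logarithmic gain must be manufactured by summing across scales.
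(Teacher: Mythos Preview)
Your treatment of the $\BRV$ part is correct and in fact cleaner than the paper's. The paper proves $S\in\BRV$ by the same starting formula
\[
V(S,\theta)=\int_0^1\frac{2}{|1-te^{i\theta}|^2}\exp\!\left(-\frac{1-t^2}{|1-te^{i\theta}|^2}\right)dt,
\]
but then proceeds by brute force: split at $t=1/2$, use $|1-te^{i\theta}|^2\asymp(1-t)^2+\theta^2$ for $t>1/2$, and estimate case by case according to whether $|\theta|$ is large or small. Your conformal substitution $w=\psi(te^{i\theta})$ packages all of this into the single formula $V(S,\theta)=\frac{1}{\sin\theta}\int_0^{\pi-\theta}\exp(-\sin\alpha/\sin\theta)\,d\alpha$, from which the uniform bound follows in two lines. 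This is a genuine simplification.

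The second half, however, is not a proof but a programme, and you yourself flag the main step as an expected obstacle. The proposed dyadic superposition $f_N$ is not constructed, the phase-matching is not specified, and the claimed lower bound $\|H_{\overline S}f_N\|_{L^\infty}\gtrsim\log N$ is asserted by analogy with the conjugate Dirichlet kernel rather than derived. Making such a construction work against an inner function with an essential boundary singularity is delicate; the interaction between the localised bumps and the rapid oscillation of $\overline S$ near $z=1$ does not obviously produce the coherent addition you want. As it stands this part is a gap.

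The paper avoids all of this by quoting two facts from the theory of Cauchy transforms: first, the atomic singular inner function is \emph{not} a multiplier of the space $\K$ of Cauchy transforms (this is classical; see Cima--Matheson--Ross, \emph{The Cauchy Transform}, Theorems~6.6.3 or~6.6.11); second, for $g\in H^\infty$ the Hankel operator $H_{\overline g}$ is bounded on $H^\infty$ if and only if $g$ multiplies $\K$ (ibid., Proposition~6.1.5). Your own observation that $H_{\overline S}$ is bounded on $H^\infty$ iff the co-analytic Toeplitz operator $T_{\overline S}$ is bounded on $H^\infty$ is exactly the first step toward this characterisation, since $T_{\overline S}$ on $H^\infty$ is, via the duality $\K_a^*\simeq H^\infty$, the adjoint of multiplication by $S$ on $\K_a$. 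Pushing this one step further gives the multiplier criterion and closes the argument without any explicit construction.
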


\begin{proof}
By
\cite[Theorem $6.6.3$ or Theorem~6.6.11]{CiMaRo}, the singular inner
function $S$ does not belong to the space of multipliers of
$\mathcal K$. Thus, by \cite[Proposition~6.1.5]{CiMaRo},
the Hankel operator with symbol $\overline S$, is not bounded on $H^\infty$. 
We will show that $S\in\BRV$, that is,
    \begin{equation}\label{hg1}
    \sup_{\theta\in (-\pi,\pi]}\left|V(S,\theta) \right|=\sup_{\theta\in
    (-\pi,\pi]}\int_0^1
    \frac{2}{|1-te^{i\theta}|^2}\exp\left(-\frac{1-t^2}{|1-te^{i\theta}|^2}\right)\,dt<\infty,
    \end{equation}
 and so $T_S:H^\infty\to H^\infty$ is bounded.

Since $1/4\le|1-te^{i\theta}|^2\le9/4$ for $0\le t\le 1/2$,
    \begin{equation*}
    \sup_{\theta\in (-\pi,\pi]}\int_0^{1/2}
    \frac{1}{|1-te^{i\theta}|^2}\exp\left(-\frac{1-t^2}{|1-te^{i\theta}|^2}\right)\,dt\le2.
    \end{equation*}
If $1/2<t<1$, then $\frac{ (1-t)^2+\theta^2}{\pi^2} \le
|1-te^{i\theta}|^2\le (1-t)^2+\theta^2$, and hence
    \begin{equation*}
    \int_{1/2}^1
    \frac{1}{|1-te^{i\theta}|^2}\exp\left(-\frac{1-t^2}{|1-te^{i\theta}|^2}\right)\,dt\le
    \pi ^2\int_{1/2}^1
    \frac{1}{(1-t)^2+\theta^2}\exp\left(-\frac{1-t}{(1-t)^2+\theta^2}\right)\,dt.
    \end{equation*}
If $|\theta|\ge1/2$, then
    \begin{equation*}
    \int_{1/2}^1
    \frac{1}{|1-te^{i\theta}|^2}\exp\left(-\frac{1-t^2}{|1-te^{i\theta}|^2}\right)\,dt\le2\pi^2,
    \end{equation*}
meanwhile for $|\theta|<1/2$, we have
    \begin{equation*}
    \begin{split}
    &\int_{1/2}^1
    \frac{1}{(1-t)^2+\theta^2}\exp\left(-\frac{1-t}{(1-t)^2+\theta^2}\right)\,dt
    = \int_{0}^{1/2}
    \frac{1}{x^2+\theta^2}\exp\left(-\frac{x}{x^2+\theta^2}\right)\,dx
    \\ & =
    \int_{0}^{|\theta|}
    \frac{1}{x^2+\theta^2}\exp\left(-\frac{x}{x^2+\theta^2}\right)\,dx+
    \int_{|\theta|}^{1/2}
    \frac{1}{x^2+\theta^2}\exp\left(-\frac{x}{x^2+\theta^2}\right)\,dx
    \\ & \le
    \int_{0}^{|\theta|}
    \frac{1}{\theta^2}\exp\left(-\frac{x}{2\theta^2}\right)\,dx+
    \int_{|\theta|}^{1/2}
    \frac{1}{x^2}\exp\left(-\frac{1}{2x}\right)\,dx
    \\ & =
    2\left(1-\exp\left(-\frac{1}{2|\theta|}\right)\right)+ 2\left(
    \exp\left(-1\right) -\exp\left(-\frac{1}{2|\theta|}\right)\right)
    \le 2(1+e^{-1}).
    \end{split}
    \end{equation*}
By combining these three estimates we deduce \eqref{hg1}.
\end{proof}

\section{Weakly compact integral operators mapping into $H^\infty$}\label{section:weakcompactness}

In this section,
 we deal with the weak compactness of integral operators mapping into~$H^\infty$.
  First of all, we characterize the weak-star topology in Banach spaces of analytic functions with a separable predual.

\subsection{Preliminary results on weak compactness}
Throughout this section a collection of results, which will be used
repeatedly in the paper, will be proved or recalled.

\begin{lemma}\label{le:wstartopology}
Let $X$ be a separable Banach space such that
$X^*\simeq Y\subset \H(\D)$ via a pairing $\langle \cdot, \cdot
\rangle$. Assume that
\begin{enumerate}
\item Each bounded sequence $\{f_n\}$ in $Y$ is uniformly bounded on compact subsets of~$\D$;
\item For every $z\in\D$, there exists $K_z\in X$ such that $f(z)=\langle K_z,f\rangle$ for all $f\in Y$.
\end{enumerate}
Then the weak-star topology $\sigma(Y,X)$ on $B_Y$ is
the topology of uniform convergence on compact subsets of $\D$.
\end{lemma}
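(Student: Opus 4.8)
The goal is to show that on the unit ball $B_Y$ of $Y$, the weak-star topology $\sigma(Y,X)$ coincides with the topology of uniform convergence on compact subsets of $\D$. Since $X$ is separable, $B_Y$ is weak-star metrizable, so it suffices to work with sequences. The plan is therefore to fix a sequence $\{f_n\}\subset B_Y$ and prove that $f_n\to f$ in $\sigma(Y,X)$ if and only if $f_n\to f$ uniformly on each compact subset of $\D$, where $f\in B_Y$ (note $B_Y$ is $\sigma(Y,X)$-compact by Banach--Alaoglu, and hypothesis (1) guarantees limits stay analytic).

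First I would prove that weak-star convergence implies uniform convergence on compacta. Suppose $f_n\to f$ in $\sigma(Y,X)$. By hypothesis (2), for each $z\in\D$ there is $K_z\in X$ with $h(z)=\langle K_z,h\rangle$ for all $h\in Y$; hence pointwise $f_n(z)=\langle K_z,f_n\rangle\to\langle K_z,f\rangle=f(z)$. So $\{f_n\}$ converges pointwise to $f$. Combined with hypothesis (1), which says $\{f_n\}$ is uniformly bounded on compact subsets of $\D$, a normal families argument (Montel's theorem plus the fact that any uniformly-convergent-on-compacta subsequential limit must agree with the pointwise limit $f$) upgrades pointwise convergence to uniform convergence on compact subsets.

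For the converse direction, assume $f_n\to f$ uniformly on compacta and take an arbitrary $L\in X$; I must show $\langle L,f_n\rangle\to\langle L,f\rangle$. Here I expect the main obstacle to lie, because uniform convergence on compacta controls the functions only on a fixed compact set, whereas $L$ a priori ``sees'' behaviour near the boundary. The resolution is to use the metrizability of $(B_Y,\sigma(Y,X))$ together with the already-established forward implication: since $\{f_n\}\subset B_Y$ is relatively $\sigma(Y,X)$-compact, every subsequence has a further subsequence converging weak-star to some $g\in B_Y$; by the forward implication that subsubsequence converges to $g$ uniformly on compacta, forcing $g=f$ by uniqueness of limits. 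Thus every subsequence of $\{f_n\}$ has a weak-star-convergent subsubsequence with limit $f$, and a standard subsequence argument then yields $f_n\to f$ in $\sigma(Y,X)$. This closes the equivalence and, since the two topologies have the same convergent sequences on the metrizable compact set $B_Y$, they coincide there.
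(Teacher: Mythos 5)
Your proposal is correct and follows essentially the same route as the paper: the forward implication via the reproducing elements $K_z$ plus normal families (the paper invokes Vitali's theorem where you use Montel with uniqueness of limits, which is the same argument), and the converse via metrizability of $(B_Y,\sigma(Y,X))$, Banach--Alaoglu, and the subsequence-of-subsequence principle (the paper phrases this as a contradiction with a metric $d$, but it is the identical mechanism). No gaps.
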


\begin{proof}
Let $\{f_n\}$ be a bounded sequence in $Y$ such that it converges in
the weak-star topology $\sigma(Y,X)$ to $f\in Y$. Since $X^*\simeq Y$, $\{f_n\}$
converges pointwise to $f$ by (2). By Vitali's Theorem and (1),
$\{f_n\}$ converges uniformly on compact subsets of $\D$ to $f$.

Since $X$ is separable, the
weak-star topology $\sigma(Y,X)$ on $B_Y$ is metrizable by~\cite[p.~32]{Woj}.
Assume that $\{f_n\}$ is a sequence in $B_{Y}$ that converges  to
$f$ uniformly on compact subsets of $\D$ but $\{f_n\}$ does not
converge to $f$ in the weak-star topology $\sigma(Y,X)$. Let~$d$
denote a metric which induces the weak-star topology on $B_{Y}$. For
each $\ep>0$, there exists a subsequence $\{f_{n_k}\}$ such that
$d(f_{n_k}, f)\ge \ep$ for all $k$. Since $\{f_{n_k}\}\in B_{Y}$,
the Banach-Alaogl\'u Theorem shows that there exists a subsequence
$\{f_{{n_{k}}_{j}}\}$ that converges to some $g\in B_Y$ in the
weak-star topology. But, by arguing as in the first part of the
proof, we deduce that $\{f_{{n_{k}}_{j}}\}$ converges uniformly on
compact subsets of $\D$ to $g$. Thus $f=g$, and therefore
$\{f_{{n_{k}}_{j}}\}$ converges to $f$ in the weak-star topology,
that is, $d(f_{n_{k_j}}, f)\to0$, as $j\to\infty$. This
contradiction finishes the proof.
\end{proof}

Lemma~\ref{le:wstartopology} can be applied, for example, if $Y$ is any of the spaces $H^1$,
 $H^\infty$~\cite[Lemma~2.2]{Lefevre}, $\K$~\cite[Proposition~4.2.5]{CiMaRo}, $\B$, $\BMOA$, $A^1$, or $H_v^\infty$,
  which consists of $f\in\H(\D)$ such that $\sup_{z\in \D}|f(z)|v(z)<\infty$, where $v$ is a typical weight,
  that is, $v$ is continuous,  radial and $\lim_{r\to1^-}v(r)=0$~\cite{BBT}.   Moreover, it gives a description of the weak topology of the reflexives spaces $H^p$ and $A^p_\om$, $\om\in\R$ \cite{PelRatproj}.

Recall that $T:\mathcal{H}(\D)\to \mathcal{H}(\D)$ is continuous if $T(f_n)\to0$ uniformly on compact subsets of $\D$ whenever the same is true for $\{f_n\}$.

\begin{lemma}\label{biadjunto_general} Let $T:\mathcal{H}(\D)\to \mathcal{H}(\D)$ continuous and $X$ a Banach space such that $X^{**}$ is a space of analytic functions such that on its unit ball, the weak-star topology $\sigma (X^{**},X^*)$ coincides  with the topology of uniform convergence on compact subsets of $\D$.
If $T:X\to H^\infty$ is weakly compact, then $T^{**}(f)=T(f)$ for all
$f\in X^{**}$.
\end{lemma}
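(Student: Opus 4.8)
The plan is to combine Goldstine's theorem with Gantmacher's characterization of weak compactness, using the hypothesis to convert weak-star convergence into uniform convergence on compact subsets of $\D$. By linearity of $T^{**}$ and $T$ it suffices to treat $f$ in the closed unit ball $B_{X^{**}}$. Goldstine's theorem furnishes a net $\{f_\alpha\}\subset B_X$ with $f_\alpha\to f$ in the weak-star topology $\sigma(X^{**},X^*)$, and by hypothesis this convergence is then uniform on compact subsets of $\D$. Since $T:\H(\D)\to\H(\D)$ is continuous and $\H(\D)$ is metrizable in the topology of uniform convergence on compacta (so sequential continuity is genuine continuity and applies to nets), I would deduce $Tf_\alpha\to Tf$ uniformly on compacta, and in particular $Tf_\alpha(z)\to (Tf)(z)$ for every $z\in\D$.

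On the dual side, recall that $T^{**}=(T^*)^*$ is continuous from $\sigma(X^{**},X^*)$ to $\sigma((H^\infty)^{**},(H^\infty)^*)$ and satisfies $T^{**}\circ\kappa_X=\kappa_{H^\infty}\circ T$, where $\kappa_X$ and $\kappa_{H^\infty}$ denote the canonical embeddings into the biduals. Hence $T^{**}(\kappa_X f_\alpha)=\kappa_{H^\infty}(Tf_\alpha)$, and by weak-star continuity $\kappa_{H^\infty}(Tf_\alpha)\to T^{**}(f)$ in $\sigma((H^\infty)^{**},(H^\infty)^*)$. Because $T:X\to H^\infty$ is weakly compact, Gantmacher's theorem gives $T^{**}(X^{**})\subseteq\kappa_{H^\infty}(H^\infty)$, so there is $h\in H^\infty$ with $T^{**}(f)=\kappa_{H^\infty}(h)$. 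Since the restriction of $\sigma((H^\infty)^{**},(H^\infty)^*)$ to the canonical copy $\kappa_{H^\infty}(H^\infty)$ is exactly the weak topology $\sigma(H^\infty,(H^\infty)^*)$, this yields $Tf_\alpha\to h$ weakly in $H^\infty$.

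To finish, I would test both limits against the point-evaluation functionals $\delta_z\in (H^\infty)^*$: weak convergence in $H^\infty$ gives $Tf_\alpha(z)\to h(z)$ for every $z\in\D$, while the uniform-on-compacta convergence established in the first step gives $Tf_\alpha(z)\to(Tf)(z)$. Therefore $h(z)=(Tf)(z)$ for all $z\in\D$, that is $h=Tf$, whence $T^{**}(f)=\kappa_{H^\infty}(Tf)$. Under the identification of the analytic functions in $X^{**}$ and in $\kappa_{H^\infty}(H^\infty)$ with their representatives in $\H(\D)$, this reads $T^{**}(f)=T(f)$, as claimed.

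The main obstacle, and the place where the hypotheses must be used with care, is the bookkeeping of the three weak-star topologies and the canonical embeddings: one must verify that $T^{**}$ is genuinely weak-star continuous and compatible with $\kappa_X,\kappa_{H^\infty}$, invoke Gantmacher to place $T^{**}(f)$ inside $\kappa_{H^\infty}(H^\infty)$, and check that $\sigma((H^\infty)^{**},(H^\infty)^*)$ restricts on the canonical copy of $H^\infty$ to $\sigma(H^\infty,(H^\infty)^*)$ so that the point evaluations may be applied. The standing hypothesis that $\sigma(X^{**},X^*)$ agrees with uniform convergence on compacta on $B_{X^{**}}$ is precisely the bridge that lets the continuity of $T$ on $\H(\D)$ interact with the bidual computation; everything else is routine once the two pointwise limits are matched.
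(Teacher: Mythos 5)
Your proposal is correct and follows essentially the same route as the paper: Goldstine's theorem to approximate $f$ by a bounded net (the paper extracts a sequence, using that the hypothesis makes the weak-star topology on the ball metrizable), the topological hypothesis to upgrade weak-star convergence to uniform convergence on compacta so that the continuity of $T$ on $\H(\D)$ applies, and Gantmacher's theorem to place $T^{**}(f)$ in $H^\infty$ and identify it with the weak limit of the $T(f_\alpha)$ via point evaluations. The extra bookkeeping with the canonical embeddings is carried out correctly and only makes explicit what the paper leaves implicit.
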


\begin{proof} Since $T:X\to H^\infty$ is weakly compact, the operator  $T^{**}:X^{**}\to H^\infty$ is bounded by \cite[p.~52]{Woj}.
Let $f\in X^{**}$. Goldstine's Theorem~\cite[p.~31]{Woj} and the
fact that the unit ball of $X^{**}$ endowed with the weak-star
topology is metrizable, imply that we can take a bounded sequence
$\{f_n\}$ in $X$ that converges to $f$ in the topology $\sigma
(X^{**},X^*)$. The weak compactness of $T$ implies that
$T(f_n)=T^{**}(f_n)$ converges to $T^{**}(f)\in H^\infty$ in the
weak topology, and then uniformly on compact subsets of $\D$. By the
hypothesis, $f_n\to f$ uniformly on compact subsets of $\D$. Thus
$T(f_n)\to T(f)$ uniformly on compact subsets of $\D$, and therefore
$T^{**}(f)=T(f)$.
\end{proof}

Since the weak compactness of an operator is equivalent to the weak compactness of its bi-adjoint by~\cite[p.~52]{Woj}, under the assumptions of Lemma~\ref{biadjunto_general}, $T_g:X\to H^\infty$ is weakly compact if and only if $T_g:X^{**}\to H^\infty$ is weakly compact. This can be applied, for example, if $X$ is $\B_0$, $\VMOA$ or $H_v^0$, which consists of $f\in\H(\D)$ such that $\lim_{|z|\to1^-}f(z)v(z)=0$, when $v$ is typical.
With regard to the last case, recall that $(H_v^0)^{**}\simeq H_v^\infty$ when $v$ is typical~\cite{BBT}.

The following well-known result can be proved by using
Eberlein-Smulian's theorem~\cite[p.~49]{Woj} and standard
techniques, so its proof is  omitted.

\begin{lemma}\label{le:weakdesc} Let $X,Y\subset\H(\D)$ be Banach spaces and let $T:X\to Y$ be a linear operator.
Assume that the following conditions are satisfied:
\begin{itemize}
\item[\rm(a)] The point evaluation functionals on $Y$ are bounded;
\item[\rm(b)] For every bounded sequence in $X$, there is a
subsequence which converges uniformly on
compact subsets of $\D$ to an element of $X$;
 \item[\rm(c)] If a sequence $\{f_n\}$ in $X$
converges uniformly on compact subsets of $\D$ to zero, then
$\{T(f_n)\}$ converges uniformly on compact subsets of $\D$ to zero.
\end{itemize}
Then $T:X\to Y$ is a compact (resp. weakly compact) if and only if $\{T(f_n)\}$ converges to zero in the norm
(resp. in the weak topology) of $Y$ for each bounded sequence
$\{f_n\}$ in $X$ such that $f_n\to 0$ uniformly on compact subsets
of~$\D$.
\end{lemma}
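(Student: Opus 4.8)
The plan is to prove the compact and the weakly compact versions in parallel, treating the weakly compact case as the substantive one and invoking Eberlein--Smulian's theorem to replace relative weak compactness of $T(B_X)$ by relative weak \emph{sequential} compactness. In both cases the structural fact I would use repeatedly is that, by hypothesis~(a), the point evaluations $\delta_z$ lie in $Y^*$, so that both norm convergence and weak convergence in $Y$ force pointwise convergence of the underlying analytic functions.

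For the forward implication I would argue by contradiction. Suppose $T$ is compact (resp. weakly compact) but there is a bounded sequence $\{f_n\}$ in $X$ with $f_n\to0$ uniformly on compact subsets of $\D$ for which $T(f_n)\not\to0$ in norm (resp. weakly). Passing to a subsequence, and in the weak case selecting a functional $\phi\in Y^*$, I separate $\{T(f_{n_k})\}$ from $0$ by some $\ep>0$. Compactness (resp. weak compactness together with Eberlein--Smulian applied to the bounded set $\{T(f_{n_k})\}\subset C\,T(B_X)$) then yields a further subsequence with $T(f_{n_{k_j}})\to y$ in norm (resp. weakly) for some $y\in Y$, and the separation forces $y\neq0$. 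On the other hand, hypothesis~(c) gives $T(f_{n_{k_j}})\to0$ uniformly on compact subsets of $\D$, hence pointwise; comparing this with the limit $y$ through the bounded point evaluations supplied by~(a) forces $y(z)=0$ for every $z\in\D$, i.e. $y=0$ as an element of $Y\subset\H(\D)$. This contradiction proves the forward direction.

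For the converse I would use hypothesis~(b) to realize the (weak) compactness of $T$ sequentially. It suffices to show that every bounded sequence $\{g_n\}$ in $X$ admits a subsequence whose image under $T$ converges in norm (resp. weakly); by Eberlein--Smulian this is precisely relative (weak) compactness of $T(B_X)$. Given such $\{g_n\}$, condition~(b) provides a subsequence $\{g_{n_k}\}$ converging uniformly on compact subsets of $\D$ to some $g\in X$. Setting $h_k=g_{n_k}-g$, the sequence $\{h_k\}$ is bounded in $X$ and tends to $0$ uniformly on compact subsets of $\D$, so the standing hypothesis yields $T(h_k)\to0$ in norm (resp. weakly), that is $T(g_{n_k})\to T(g)$ in norm (resp. weakly), which is the desired convergent subsequence.

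I expect the forward direction to be the only delicate point, specifically the step ruling out a prospective nonzero limit $y$ of the images: this is where hypotheses~(a) and~(c) must be combined, and where in the weakly compact case Eberlein--Smulian is needed to extract a weakly convergent subsequence before the pointwise comparison can be carried out. The converse is essentially bookkeeping once~(b) supplies the locally uniform limit $g\in X$.
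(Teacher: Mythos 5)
Your proof is correct and is precisely the argument the paper has in mind: the paper omits the proof of Lemma~\ref{le:weakdesc}, stating only that it follows from Eberlein--Smulian's theorem and standard techniques, and your write-up supplies exactly that --- the forward direction by extracting a (weakly) convergent subsequence, identifying its limit as $0$ through the bounded point evaluations of (a) combined with the locally uniform convergence from (c), and the converse by using (b) to reduce to the sequential criterion for (weak) compactness. No gaps.
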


In~\cite[Proof of Theorem~3.5]{AJS} it is used an approach based on
the modulus of continuity of analytic functions~\cite{RST},
 in order  to show that the $g\in H^\infty$ such that
 $\lim_{\delta\to0^+}\|g-g^\delta\|_{H^\infty}=0$,
 belong to $A$, where $g^\delta(z)=g(e^{i\delta}z)$, for any $\delta\in \mathbb R$.
A proof of this result can  be obtained using ideas on semigroup
theory~\cite{BCDMPS}. Namely, if $g(z)=-iz$ then $T_g:H^\infty \to
A$ is bounded, so if one  considers the semigroup
$\{\varphi_t(z)=e^{it}z:t\ge0\}$, by \cite[Proposition 2]{BCDMPS} it
can be proved   that those $f\in H^\infty$ such that
$f\circ\varphi_t$ converges uniformly on $\D$ to $f$, belong to $A$.
The following immediate consequence is stated for further reference.

\begin{letterlemma}\label{g_in_A}
Let $g\in H^\infty$. Then $g\in A$ if and only if $\lim_{\delta\to0^+}\|g-g^\delta\|_{H^\infty}=0$.
\end{letterlemma}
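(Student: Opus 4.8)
The plan is to prove the two implications separately; the forward direction is routine, while the converse carries the real content, which I would handle by a Fej\'er--Ces\`aro averaging of the rotates of $g$.

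\emph{The easy direction.} Assume $g\in A$. Then $g$ is continuous on the compact set $\overline{\D}$, hence uniformly continuous there with some modulus $\omega_g$. Each rotate $g^\delta$ also lies in $A$, so $|g-g^\delta|$ is continuous on $\overline{\D}$ and its supremum over $\D$ agrees with that over $\overline{\D}$. Since $|e^{i\delta}z-z|=|z|\,|e^{i\delta}-1|\le|e^{i\delta}-1|$ for every $z\in\overline{\D}$, I would estimate
\[
\|g-g^\delta\|_{H^\infty}=\sup_{z\in\overline{\D}}\bigl|g(z)-g(e^{i\delta}z)\bigr|\le\omega_g\bigl(|e^{i\delta}-1|\bigr)\xrightarrow[\delta\to0^+]{}0.
\]

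\emph{The converse.} Assume $\lim_{\delta\to0^+}\|g-g^\delta\|_{H^\infty}=0$. The strategy is to realize $g$ as a uniform limit of polynomials and then invoke the closedness of $A$ in $H^\infty$. First I would upgrade the one-sided hypothesis to a two-sided estimate: writing $\omega^*(t)=\|g-g^t\|_{H^\infty}$ and using that $f\mapsto f^t$ is an isometry of $H^\infty$ (a change of variable $w=e^{it}z$ gives $\|g-g^{-t}\|_{H^\infty}=\|g^t-g\|_{H^\infty}$), one sees that $\omega^*$ is even, bounded by $2\|g\|_{H^\infty}$, and $\omega^*(t)\to0$ as $t\to0$. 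Next I would average the rotates against the Fej\'er kernel $K_n$, setting
\[
g_{[n]}(z)=\frac{1}{2\pi}\int_{-\pi}^{\pi}g(e^{it}z)\,K_n(t)\,dt.
\]
An interchange of summation and integration (justified by $|\widehat g(k)|\le\|g\|_{H^\infty}$, giving uniform convergence of $\sum_k\widehat g(k)e^{ikt}z^k$ in $t$ for fixed $z\in\D$) identifies $g_{[n]}$ with the Ces\`aro mean $\sum_{k=0}^{n}\bigl(1-\tfrac{k}{n+1}\bigr)\widehat g(k)z^k$ of the Taylor series of $g$; in particular each $g_{[n]}$ is a polynomial, hence lies in $A$.

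The key step is the norm estimate. Since $\frac{1}{2\pi}\int_{-\pi}^{\pi}K_n=1$, subtracting $g$ inside the average yields
\[
\|g_{[n]}-g\|_{H^\infty}\le\frac{1}{2\pi}\int_{-\pi}^{\pi}\|g^t-g\|_{H^\infty}\,K_n(t)\,dt=\frac{1}{2\pi}\int_{-\pi}^{\pi}\omega^*(t)\,K_n(t)\,dt,
\]
and the right-hand side tends to $\omega^*(0)=0$ by the standard approximate-identity argument: split the integral over $\{|t|<\eta\}$, where $\omega^*<\varepsilon$, and over $\{\eta\le|t|\le\pi\}$, where the Fej\'er mass $\frac{1}{2\pi}\int_{\eta\le|t|\le\pi}K_n$ tends to $0$. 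Hence $g_{[n]}\to g$ in $H^\infty$, and since $A$ is closed in $H^\infty$ we conclude $g\in A$.

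The main obstacle is the converse, and within it the only nontrivial point is this approximate-identity estimate, which is exactly where the hypothesis is consumed, through the two-sided modulus $\omega^*$; everything else is bookkeeping. I note that the semigroup route sketched in the text before the statement gives an alternative, identifying the continuity subspace $\{g:\lim_{t\to0^+}\|g^t-g\|_{H^\infty}=0\}$ of the rotation group with $A$ via the boundedness of $T_g:H^\infty\to A$ for $g(z)=-iz$ together with \cite[Proposition~2]{BCDMPS}, but the Fej\'er-mean argument above has the advantage of being self-contained.
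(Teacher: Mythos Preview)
Your proof is correct. Note, however, that the paper does not actually prove this lemma: it is stated as a \emph{letterlemma} (a cited result) and the discussion preceding it merely points to two proofs in the literature, one via the modulus of continuity of analytic functions \cite{RST} (used in \cite[Proof of Theorem~3.5]{AJS}) and one via semigroup theory \cite[Proposition~2]{BCDMPS}.

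Your Fej\'er--Ces\`aro averaging argument is a third route, distinct from both. The modulus-of-continuity approach in \cite{AJS,RST} works with the boundary behavior and the sup-norm modulus directly, while the semigroup approach (which you correctly sketch at the end) identifies $A$ as the subspace of strong continuity for the rotation semigroup on $H^\infty$, using that $T_{-iz}$ maps $H^\infty$ into $A$. Your argument has the virtue of being completely elementary and self-contained: it produces explicit polynomial approximants (the Ces\`aro means of the Taylor series) and reduces everything to the standard approximate-identity estimate for the Fej\'er kernel. The only point worth a remark is that the measurability of $\omega^*$ needed for the final integral estimate is automatic, since $\omega^*(t)=\sup_{z\in\D}|g(e^{it}z)-g(z)|$ is a supremum of continuous functions of $t$ and hence lower semicontinuous.
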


Next some known results on operators defined on $H^\infty$ are
summarized. Recall that $T:H^\infty\to Y$ fixes a copy of the
sequence space~$\ell^\infty$, if there exists $Z\subset H^\infty$,
isomorphic to $\ell^\infty$, such that $T:Z\to Y$ is an isomorphism.

\begin{lettertheorem}\label{operators_on_Hinfty1}
Let $Y$ be a Banach space and $T:H^\infty\to Y$ bounded.
Then the following statements are equivalent:
\begin{enumerate}
\item[\rm(a)] $T$ is weakly compact;
\item[\rm(b)] $T$ is completely continuous, that is, $\Vert T(f_n)\Vert_Y \to 0$ for any bounded
sequence $\{f_n\}$ in $H^\infty$ that converges weakly to zero;
\item[\rm(c)] $T$ does not fix a copy of $\ell^\infty$.
\end{enumerate}
In particular, if $Y$ is separable, then $T$ is weakly compact.
\end{lettertheorem}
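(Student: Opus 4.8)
The plan is to treat the equivalence (a)$\Leftrightarrow$(b) as the deep analytic core, supplied by Bourgain's theorem that $H^\infty$ enjoys the Dunford--Pettis property and Pelczy\'nski's property (V) \cite{Bourgain,Bourgain2}, and then to extract condition (c) and the separable consequence by soft Banach-space arguments. Concretely, I would prove the cycle (a)$\Rightarrow$(b)$\Rightarrow$(c)$\Rightarrow$(a). The first arrow is exactly the Dunford--Pettis property: a weakly compact operator carries weakly null sequences to norm null ones, so (a) gives (b). The converse (b)$\Rightarrow$(a), which is the substance of Bourgain's work, I would simply cite; I expect it to be the hardest ingredient overall.

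For (b)$\Rightarrow$(c) I would argue by contraposition. Suppose $T$ fixes a copy of $\ell^\infty$, say $j\colon\ell^\infty\to H^\infty$ is an isomorphism onto a subspace $Z$ with $T|_Z$ bounded below, and let $\{e_n\}$ be the unit vector basis of $c_0\subset\ell^\infty$. Since $(c_0)^*\simeq\ell^1$, the sequence $\{e_n\}$ is weakly null in $c_0$, hence weakly null in $\ell^\infty$ because weak convergence in a subspace passes to the ambient space. Then $\{j(e_n)\}$ is a bounded, weakly null sequence in $H^\infty$, while $\|T(j(e_n))\|_Y\gtrsim\|e_n\|_{\ell^\infty}=1$ because $T\circ j$ is bounded below. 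This contradicts complete continuity, so (b) forces (c).

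The implication (c)$\Rightarrow$(a) is where the special geometry of $H^\infty$ enters, and I expect it to be the main obstacle. By property (V) of $H^\infty$, an operator is weakly compact if and only if it is unconditionally converging, that is, if and only if it does not fix a copy of $c_0$. Thus it suffices to upgrade a fixed copy of $c_0$ into a fixed copy of $\ell^\infty$: if $T$ fixes some $c_0$ then $T$ fixes some $\ell^\infty$, so (c) forces $T$ not to fix $c_0$, whence weak compactness. This upgrade rests on the $\ell^\infty$-type structure of $H^\infty$: a normalized $c_0$-sequence on which $T$ is bounded below can be spread, via interpolating sequences in $\D$ and Carleson's theorem, into a copy of $\ell^\infty$ on which $T$ remains bounded below. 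This is the genuinely nontrivial step, and the one I would lean on the results of Bourgain and Lef\`evre \cite[Theorem~1.2]{Lefevre} to justify rather than reprove.

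Finally, the concluding assertion is immediate once the three conditions are equated: if $Y$ is separable then $Y$ contains no isomorphic copy of the non-separable space $\ell^\infty$, so $T$ cannot fix a copy of $\ell^\infty$ and (c) holds automatically; by (c)$\Rightarrow$(a) the operator $T$ is weakly compact. This also reflects the Grothendieck property of $H^\infty$: through $T^\ast$, weak-star convergent sequences in $B_{Y^\ast}$ become weakly convergent, so $T^\ast(B_{Y^\ast})$ is relatively weakly compact by Eberlein--Smulian, and Gantmacher's theorem returns the weak compactness of $T$.
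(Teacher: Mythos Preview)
Your proposal is correct and, like the paper, treats this lettertheorem as a cited result resting on Bourgain's work rather than something to be proved from scratch. The paper's organization differs slightly from yours: it attributes (a)$\Rightarrow$(b) to the Dunford--Pettis property of $H^\infty$ \cite{Bourgain2}, cites \cite[Theorem~3.1]{CDM_Contemporary} for (b)$\Rightarrow$(a), observes that (a)$\Rightarrow$(c) is trivial (a weakly compact operator cannot be an isomorphism on a copy of $\ell^\infty$), and invokes \cite[Theorem~7.1]{Bourgain} directly for (c)$\Rightarrow$(a). Your cycle (a)$\Rightarrow$(b)$\Rightarrow$(c)$\Rightarrow$(a) is a legitimate reorganization; the explicit (b)$\Rightarrow$(c) argument via the weakly null unit vectors of $c_0$ is correct but more elaborate than the paper's one-line (a)$\Rightarrow$(c), and your decomposition of (c)$\Rightarrow$(a) into property~(V) plus a $c_0\rightsquigarrow\ell^\infty$ upgrade is exactly a repackaging of what Bourgain proves in \cite{Bourgain}. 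One minor correction: the reference to \cite[Theorem~1.2]{Lefevre} for the upgrade step is misplaced, as that result concerns $w^*$--$w^*$ continuity rather than the fixing of $\ell^\infty$.
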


The fact that (a) implies (b) means that $H^\infty$ has the
Dunford-Pettis property, a result due to Bourgain \cite[Corollary~5.4]{Bourgain2}.
The converse implication can be seen in \cite[Theorem~3.1]{CDM_Contemporary}.
Being trivial that (a) implies (c), one can see that (c) implies (a)
in \cite[Theorem~7.1]{Bourgain}. We shall also use the
following result \cite[Theorem~1.2]{Lefevre}.

\begin{lettertheorem}\label{operators_on_Hinfty2}
Let $Y$ be a Banach space and $T:H^\infty\to Y^*$ a bounded
operator. Then $T:H^\infty\to Y^*$ is weakly compact and $w^*-w^*$-continuous if and only if $\Vert T(f_n)\Vert_{Y^*}\to 0$ for each bounded sequence $\{f_n\}$ in $H^\infty$ such that $\Vert f_n\Vert_{H^1}\to 0$.
\end{lettertheorem}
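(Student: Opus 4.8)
The plan is to use the identification of $H^\infty$ with the dual of the separable space $G=L^1(\T)/H^1_0$, so that the weak-star topology meant in the statement is $\sigma(H^\infty,L^1)$. Two elementary observations organize everything. First, a bounded sequence $\{f_n\}$ in $H^\infty$ with $\|f_n\|_{H^1}\to 0$ is weak-star null: for $g\in L^\infty$ one has $|\int_\T f_n g|\le\|f_n\|_{L^1}\|g\|_\infty\to 0$, and the uniform bound $\sup_n\|f_n\|_{H^\infty}<\infty$ together with the density of $L^\infty$ in $L^1$ extends this to all $g\in L^1$. Second, since $L^\infty\subset L^1$, every weak-star null sequence in $H^\infty$ is in fact weakly null in $L^1(\T)$. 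Finally, as $G$ is separable the weak-star topology on $B_{H^\infty}$ is metrizable, so by the Krein--Smulian theorem it suffices to verify $w^*$-$w^*$-continuity sequentially.

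For the reverse implication I would first obtain weak compactness. The inclusion $\iota\colon H^\infty\to H^1$ factors through the reflexive space $H^2$ (the unit ball of $H^\infty$ lands in the ball of $H^2$) and is therefore weakly compact; since $H^\infty$ has the Dunford--Pettis property (Theorem~\ref{operators_on_Hinfty1}), $\iota$ is completely continuous, so every weakly null bounded sequence in $H^\infty$ is $H^1$-null. Combined with the hypothesis this yields $\|T(f_n)\|_{Y^*}\to 0$ for all weakly null $\{f_n\}$, i.e. $T$ is completely continuous, hence weakly compact by Theorem~\ref{operators_on_Hinfty1} again. For $w^*$-$w^*$-continuity, take a weak-star null bounded sequence $\{f_n\}$; by the subsequence principle it is enough to force every weak cluster point of $\{T(f_n)\}$ to vanish. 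Weak compactness produces a subsequence with $T(f_{n_k})\to\psi$ weakly; since $f_{n_k}\to 0$ weakly in $L^1$ by the first paragraph, Mazur's theorem supplies successive convex blocks $u_j$ of $\{f_{n_k}\}$ with $\|u_j\|_{H^1}\to 0$, so $\|T(u_j)\|_{Y^*}\to 0$ by hypothesis; but $T(u_j)\to\psi$ weakly, whence $\psi=0$.

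For the forward implication, $w^*$-$w^*$-continuity means $T=S^*$ for a bounded $S\colon Y\to G$, and Gantmacher's theorem makes $S$ weakly compact because $T$ is. Thus $S(B_Y)$ is relatively weakly compact in $G=L^1/H^1_0$, and the key point is that such a set can be represented by a uniformly integrable family $\{h_y\}\subset L^1$ (equivalently, that it lifts to a relatively weakly compact subset of $L^1$). Writing $\|T(f_n)\|_{Y^*}=\sup_{y\in B_Y}|\int_\T f_n h_y|$ and using that $\|f_n\|_{L^1}\to 0$ forces $f_n\to 0$ in measure while $\sup_n\|f_n\|_\infty<\infty$, a Vitali-type splitting over the sets $\{|f_n|>\eta\}$ and their complements gives $\sup_{y\in B_Y}|\int_\T f_n h_y|\to 0$, which is the asserted conclusion.

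The main obstacle in both directions is the passage from weak to norm convergence of $\{T(f_n)\}$: in the reverse direction it is handled by the convex-block reduction to genuinely $H^1$-null sequences, and in the forward direction by the uniform integrability of relatively weakly compact subsets of the predual $L^1/H^1_0$. Both steps rest on the Dunford--Pettis property of $H^\infty$ (through Theorem~\ref{operators_on_Hinfty1} and the weak compactness of $H^\infty\hookrightarrow H^1$) rather than on any soft functional-analytic principle, and the lifting of weak compactness through the quotient $L^1\to L^1/H^1_0$ is the most delicate ingredient to justify carefully.
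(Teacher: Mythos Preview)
The paper does not give its own proof of this statement: it is quoted verbatim as Theorem~1.2 of Lef\`evre \cite{Lefevre} and used as a black box. So there is no ``paper's proof'' to compare against; the question is simply whether your argument stands on its own.

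Your reverse implication is correct. The factorization $H^\infty\hookrightarrow H^2\hookrightarrow H^1$ makes the inclusion weakly compact, the Dunford--Pettis property of $H^\infty$ (Theorem~\ref{operators_on_Hinfty1}) turns this into ``weakly null in $H^\infty$ $\Rightarrow$ $H^1$-null'', and the hypothesis then gives complete continuity of $T$, hence weak compactness. The Mazur convex-block argument for $w^*$--$w^*$ continuity is also valid once you note (as you do implicitly) that Mazur can be applied to successive tails so that the blocks $u_j$ lie in $\mathrm{co}\{f_{n_k}:k\ge j\}$; then $T(u_j)\to\psi$ weakly while $\|T(u_j)\|\to 0$ forces $\psi=0$, and Krein--Smulian plus metrizability of $\sigma(H^\infty,L^1)$ on bounded sets finishes.

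The forward implication, however, has a genuine gap. Your reduction is right: writing $T=S^*$ with $S:Y\to L^1/H^1_0$ weakly compact, everything comes down to showing that a relatively weakly compact subset of $L^1/H^1_0$ can be realized as $q(\tilde K)$ for some uniformly integrable $\tilde K\subset L^1$. You flag this yourself as ``the most delicate ingredient'' but do not prove it, and it is \emph{not} a soft consequence of the Dunford--Pettis property of $H^\infty$ or of $L^1/H^1_0$. To see that some hard input is unavoidable here, note that the shortcut ``$T$ weakly compact $\Rightarrow$ $T$ completely continuous $\Rightarrow$ done'' fails: the sequence $f_n(z)=\bigl(\tfrac{1+z}{2}\bigr)^n$ is bounded in $H^\infty$ with $\|f_n\|_{H^1}\asymp n^{-1/2}\to 0$, yet $f_n$ is \emph{not} weakly null in $H^\infty$ (its Gelfand transform equals $1$ on the entire fiber of $M(H^\infty)$ over the point $1\in\T$). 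The lifting result you need is true and is essentially the content of Delbaen's characterization of weakly compact subsets of the predual of $H^\infty$ (see also Kisliakov, or Pe\l czy\'nski's CBMS notes on Banach spaces of analytic functions); it is equivalent to saying that $H^1_0$ is ``nicely placed'' in $L^1$. You should either cite that result explicitly or supply a proof---without it, the forward direction is incomplete.
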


\subsection{Main results on weak compactness}

It is worth mentioning that $T_c(H^\infty,H^\infty)\subset A$ but
$T(H^\infty,H^\infty)\not\subset A$ by~\cite[Theorem~3.5 and
Proposition~2.12]{AJS}. Theorem~\ref{weakly_compact_imply_g_in_A}
improves the first of these results.

\medskip

\begin{Prf}{\em{Theorem~\ref{weakly_compact_imply_g_in_A}}}. For $f\in \mathcal{H}(\D)$ and  $\delta \in \mathbb{Re}$
define  $f^\delta(z)=f(e^{i\delta}z)$. Let $\{\delta_n\}$ be a
sequence of real numbers that converges to zero, and define
$h_n=g-g^{\delta_n}$ and $l_n=g-g^{-\delta_n}$. Then $\{h_n\}$ and
$\{l_n\}$ are bounded sequences in $H^\infty$ that converge to zero
uniformly on compact subsets of $\D$. Since $T_g:H^\infty\to
H^\infty$ is weakly compact, by passing to subsequences if
necessary, we may assume that $\{T_g(h_n)\}$ and $\{T_g(l_n)\}$
converge to zero in the weak topology of $H^\infty$. Further, since
$T_g:H^\infty\to H^1$ is weakly compact by the hypothesis,
Theorem~\ref{operators_on_Hinfty1} yields $\|T_g(h_n)\|_{H^1}\to 0$
and $\|T_g(l_n)\|_{H^1}\to 0$. Write $\rho_n(z)=e^{i\delta_n}z$.
Then
    \begin{eqnarray*}
    T_{g^{\delta_n}}(h_n)(z)&=& T_{g^{\delta_n}}(g-g^{\delta_n})(z)=
    \int_0^z (g(\xi)-g(e^{i\delta_n}\xi))g'(e^{i\delta_n}\xi) e^{i\delta_n}\, d\xi\\
    &=&\int_0^{e^{i\delta_n}z} (g(e^{-i\delta_n}u)-g(u))g'(u)\, du=
    -T_{g}(l_n)(e^{i\delta_n}z)=- T_g(l_n)\circ\rho_n(z),
    \end{eqnarray*}
and hence $\|T_{g^{\delta_n}}(h_n)\|_{H^1}=\|T_g(l_n)\circ\rho_n\|_{H^1}=\|T_g(l_n)\|_{H^1}\to0$. Since
$T_{h_n}(h_n)=T_g(h_n)-T_{g^{\delta_n}}(h_n)$, we deduce
    $$
    \frac{1}{2}\Vert  h_n^2\Vert_{H^1}=\Vert T_{h_n}(h_n)\Vert_{H^1}\le\Vert
    T_g(h_n)\Vert_{H^1}+\Vert T_{g^{\delta_n}}(h_n)\Vert_{H^1},
    $$
and therefore $\|h_n^2\|_{H^1}\to 0$. The same reasoning shows that $\|l_n^2\|_{H^1}\to 0$.

By Lemma~\ref{le:wstartopology}, the weak-star convergence of
bounded sequences in $H^\infty$ is nothing else but the uniform
convergence on compact subsets of $\D$, so it is clear that $T_g:
H^\infty\to H^\infty$ is $w^*-w^*$ continuous. Therefore Theorem~\ref{operators_on_Hinfty2} implies
$\|T_g(h_n^2)\|_{H^\infty}\to0$ and $\|T_g(l_n^2)\|_{H^\infty}\to0$.
By arguing as above, we deduce
$T_{g^{\delta_n}}(h_n^2)=-T_g(l_n^2)\circ\rho_n$, and thus $\Vert
T_{g^{\delta_n}}(h_n^2)\Vert_{H^\infty}\to 0$. Since
$T_{h_n}=T_g-T_{g^{\delta_n}}$, we finally obtain
    $$
    \frac{1}{3}\Vert  h_n^3\Vert_{H^\infty}=\Vert
    T_{h_n}(h_n^2)\Vert_{H^\infty}\le\Vert T_g(h^2_n)\Vert_{H^\infty}+\Vert
    T_{g^{\delta_n}}(h_n^2)\Vert_{H^\infty}\to 0,
    $$
which shows that $\Vert g-g^\delta\Vert_{H^\infty}\to0$, as $\delta\to0$. Therefore $g\in A$ by Lemma~\ref{g_in_A}.
\end{Prf}

\begin{proposition}\label{rang}
Let $X\subset\H(\D)$ be an admissible Banach space and $g\in A$ such that $T_g:X\to
H^\infty$ is weakly compact. Then $T_g:X\to A$ is bounded.
\end{proposition}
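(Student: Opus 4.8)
The plan is to show that the range of $T_g$ is contained in the disc algebra $A$; since $T_g\colon X\to H^\infty$ is bounded (every weakly compact operator is bounded) and $A$ carries the $H^\infty$-norm, this immediately yields that $T_g\colon X\to A$ is bounded. The subtle point, and the reason weak compactness is needed at all, is that polynomials need not be dense in $X$ (as the case $X=H^\infty$ shows), so one cannot simply transfer the conclusion from a dense subspace.

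First I would check that $T_g$ sends polynomials into $A$. For a polynomial $p$, an integration by parts gives $T_g(p)(z)=p(z)g(z)-p(0)g(0)-\int_0^z p'(\zeta)g(\zeta)\,d\zeta$, an expression that no longer involves $g'$. Since $g\in A$, both $pg$ and $p'g$ lie in $A$, and the antiderivative of a function in $A$ again belongs to $A$, because it is Lipschitz on $\overline{\D}$ (its increment over a segment joining two points of $\D$ is controlled by the sup-norm of the integrand), hence extends continuously to the boundary. Thus $T_g(p)\in A$. As $A$ is closed in $H^\infty$ and $T_g\colon X\to H^\infty$ is continuous, it follows that $T_g\bigl(\overline{\mathcal{P}}^X\bigr)\subset A$.

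Now fix $f\in X$. By (P1) each dilation $f_r$ lies in $\overline{\mathcal{P}}^X$, so $T_g(f_r)\in A$ for all $0<r<1$, while (P2) makes $\{f_r\}_{0<r<1}$ bounded in $X$. Here I would invoke weak compactness: the set $\{T_g(f_r)\}$ is then relatively weakly compact in $H^\infty$, so by the Eberlein-Smulian theorem there is a sequence $r_n\to1^-$ for which $T_g(f_{r_n})$ converges weakly to some $h\in H^\infty$. Testing against the bounded point-evaluation functionals gives $T_g(f_{r_n})(z)\to h(z)$ for every $z\in\D$; but we also know $T_g(f_{r_n})(z)\to T_g(f)(z)$ pointwise, as recorded in the proof of Proposition~\ref{bd-from-X=from-polynomials}, whence $h=T_g(f)$. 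Since $A$ is a norm-closed subspace of $H^\infty$, it is convex and norm-closed, hence weakly closed, and therefore the weak limit $T_g(f)$ belongs to $A$. This holds for every $f\in X$, which completes the argument. The only genuine obstacle is the passage from $\overline{\mathcal{P}}^X$ to all of $X$, and weak compactness resolves it precisely by upgrading the merely pointwise convergence $T_g(f_r)\to T_g(f)$ to convergence inside the weakly closed subspace $A$.
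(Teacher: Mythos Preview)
Your proof is correct and follows essentially the same route as the paper's: show $T_g(f_r)\in A$ for each $r<1$, use weak compactness to pass to a weak limit in $H^\infty$, identify that limit as $T_g(f)$ via point evaluations, and conclude by the weak closedness of $A$. The only cosmetic differences are that you spell out the integration-by-parts argument for $T_g(p)\in A$ and invoke Eberlein--Smulian directly, whereas the paper simply asserts $T_g(f_r)\in A$ and packages the subsequence extraction and limit identification into a reference to Lemma~\ref{le:weakdesc} together with Mazur's theorem.
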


\begin{proof}
Let $f\in X$. Since $f_r$ converges to $f$ uniformly on compact
subsets of $\D$ and $\sup_{0<r<1}\|f_r\|_X<\infty$ by (P2),
$T_g(f_r)$ converges to $T_g(f)$ in the weak topology of $H^\infty$
by Lemma~\ref{le:weakdesc}. Since $T_g(f_r)\in A$ for all $r$ and
$A$ is closed in $H^\infty$ in the weak topology, by Mazur's theorem
\cite[p.~28]{Woj}, we deduce that $T_g(f)\in A$.
\end{proof}

\begin{corollary}\label{co:Xweak}
Let $g\in\H(\D)$ and $X\subset\H(\D)$ a Banach space such that $H^\infty\subset X$ and $X$ is isomorphic to $\ell^\infty$ or $H^\infty$. Then $T_g:X\to H^\infty$ is weakly compact if and only if $T_g:X\to A$ is bounded.
\end{corollary}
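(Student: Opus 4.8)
The plan is to prove the two implications separately. First I would record a fact used throughout: the inclusion $H^\infty\hookrightarrow X$ is bounded. By the closed graph theorem it suffices that convergence in either norm forces pointwise convergence in $\H(\D)$, and this holds because point evaluations are bounded on both spaces. Thus $T_g\colon X\to H^\infty$ bounded restricts to a bounded $T_g\colon H^\infty\to H^\infty$, and the same is true for compactness and weak compactness.

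For the implication that boundedness of $T_g\colon X\to A$ yields weak compactness of $T_g\colon X\to H^\infty$, I would exploit that $A$ is separable while $X$ is isomorphic to $\ell^\infty$ or $H^\infty$. Fixing an isomorphism $\Phi\colon V\to X$ with $V\in\{\ell^\infty,H^\infty\}$, the operator $T_g\circ\Phi\colon V\to A$ is bounded. A bounded operator from $V$ into a separable space cannot fix a copy of $\ell^\infty$, since its restriction would be an isomorphism onto a non-separable subspace of the separable space $A$; hence it is weakly compact. For $V=H^\infty$ this is the final assertion of Theorem~\ref{operators_on_Hinfty1}, and for $V=\ell^\infty$ it is Rosenthal's analogous dichotomy for operators on $\ell^\infty$. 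As weak compactness is stable under composition with bounded maps, $T_g=(T_g\circ\Phi)\circ\Phi^{-1}$ is weakly compact from $X$ into $A$, and composing with the bounded inclusion $A\hookrightarrow H^\infty$ finishes this direction.

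For the converse I would start from weak compactness of $T_g\colon X\to H^\infty$ and restrict to the subspace $H^\infty\subset X$: since this inclusion is bounded, $T_g\colon H^\infty\to H^\infty$ is weakly compact, so Theorem~\ref{weakly_compact_imply_g_in_A} gives $g\in A$. Because $H^\infty$ is admissible and $g\in A$, Proposition~\ref{rang} then shows $T_g\colon H^\infty\to A$ is bounded, that is, $T_g(H^\infty)\subset A$. Since $A$ is closed in $H^\infty$, the whole statement reduces to the range inclusion $T_g(X)\subset A$, from which boundedness of $T_g\colon X\to A$ follows automatically.

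The hard part will be upgrading this range inclusion from $H^\infty$ to all of $X$. The device from Proposition~\ref{rang} — approximating $f$ by its dilations $f_r$, which satisfy $T_g(f_r)\in A$ — is not available here, because $X$ need not be admissible: for $f\in X\setminus H^\infty$ one has $\|f_r\|_X\gtrsim M_\infty(r,f)\to\infty$, so (P2) fails. The plan is therefore to use the Banach-space structure of $X$ essentially. Since $X$ is isomorphic to $\ell^\infty$ or $H^\infty$, it has the Dunford--Pettis property, so the weakly compact $T_g\colon X\to H^\infty$ is completely continuous, and it is moreover a Grothendieck dual space. Combined with Lemma~\ref{g_in_A} and the identity $(T_gf)^\delta=T_{g^\delta}(f^\delta)$ (with $h^\delta(z)=h(e^{i\delta}z)$), which reduces $T_g(f)\in A$ to $\|T_g(f)-(T_gf)^\delta\|_{H^\infty}\to 0$ as $\delta\to 0$, these are the properties that must drive the conclusion. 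Transferring the rotation characterization of $A$ through this $\ell^\infty/H^\infty$ structure, rather than through dilation invariance of $X$, is the genuine obstacle; the easier implication and the reduction to $g\in A$ with $T_g(H^\infty)\subset A$ are routine given Theorem~\ref{weakly_compact_imply_g_in_A} and Proposition~\ref{rang}.
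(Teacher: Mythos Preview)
Your backward implication is correct and matches the paper's argument exactly.

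For the forward implication your proof is incomplete: you stop at $T_g(H^\infty)\subset A$ and declare the extension to all of $X$ a ``genuine obstacle''. The paper avoids this obstacle entirely by applying Proposition~\ref{rang} directly to $X$, not to the subspace $H^\infty$. Once $g\in A$ and $T_g:X\to H^\infty$ is weakly compact, Proposition~\ref{rang} gives $T_g(X)\subset A$ in one step, provided $X$ is admissible. The paper's proof tacitly uses this hypothesis; indeed all the intended applications (the Bloch space~$\B$, the spaces $H_v^\infty$) are admissible, so (P2) holds and the dilation argument goes through for every $f\in X$, not just for $f\in H^\infty$.

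Your stated reason for discarding Proposition~\ref{rang} is also flawed: from the bounded inclusion $H^\infty\hookrightarrow X$ you get $\|f_r\|_X\lesssim\|f_r\|_{H^\infty}=M_\infty(r,f)$, which is an \emph{upper} bound, not the lower bound $\|f_r\|_X\gtrsim M_\infty(r,f)$ you claim. So you have not actually shown that (P2) fails; you have only shown that it does not follow from the hypotheses as stated. That is a fair observation about the wording of the corollary, but the remedy is to read the hypothesis of admissibility as implicit (as the paper's own proof does), not to look for a new mechanism via rotations and the Dunford--Pettis property.
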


\begin{proof}
If $T_g:X\to H^\infty$ is weakly compact, so is $T_g:H^\infty\to H^\infty$, and hence $g\in A$ by Theorem~\ref{weakly_compact_imply_g_in_A}. Therefore $T_g:X\to A$ is bounded by Proposition~\ref{rang}. Conversely, assume that $T_g:X\to A$ is bounded. Since $X$ is isomorphic to $\ell^\infty$ or $H^\infty$, each bounded operator from $X$ into a separable Banach space is weakly compact by \cite[p.~156]{DiestelUhl} (if $X$ is isomorphic to $\ell^\infty$)
and Theorem~\ref{operators_on_Hinfty1} (if $X$ is isomorphic to $H^\infty$). Hence $T_g:X\to A$ is weakly compact.
\end{proof}

Recall that the Bloch space $\B$ is isomorphic to $\ell^\infty$.
Moreover, it is well-known that $H_v^\infty $ is isomorphic to
$\ell^\infty$ or $H^\infty$, depending on $v$, provided $v$ is
typical~\cite{Lusky}. We emphasize the result for the Bloch space.

\begin{corollary}
Let $g\in\H(\D)$ such that $T_g:\B\to H^\infty$ is bounded. Then the following
statements are equivalent:
\begin{enumerate}
\item[\rm(a)] $T_g:\B\to H^\infty$ is weakly compact;
\item[\rm(b)] $T_g:\B\to A$ is bounded;
\item[\rm(c)] $T_g:\B_0\to A$ is weakly compact.
\item[\rm(d)] $T_g:\B_0\to H^\infty$ is weakly compact.
\end{enumerate}
\end{corollary}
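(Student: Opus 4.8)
The plan is to deduce all four equivalences from two ingredients already in place: Corollary~\ref{co:Xweak} combined with the isomorphism $\B\simeq\ell^\infty$, and the bi-adjoint transfer principle furnished by Lemma~\ref{biadjunto_general} together with the identification $\B_0^{**}\simeq\B$. Concretely, I would establish the cycle (a)$\Leftrightarrow$(b), (b)$\Rightarrow$(c)$\Rightarrow$(d)$\Rightarrow$(a), which forces all of (a)--(d) to be equivalent.

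First, since $H^\infty\subset\B$ and $\B\simeq\ell^\infty$, Corollary~\ref{co:Xweak} applied with $X=\B$ gives the equivalence of (a) and (b) with no extra work. For (b)$\Rightarrow$(c), assume $T_g:\B\to A$ is bounded. Because $\B\simeq\ell^\infty$ and $A$ is separable, every bounded operator from a space isomorphic to $\ell^\infty$ into a separable Banach space is weakly compact by \cite[p.~156]{DiestelUhl}; hence $T_g:\B\to A$ is weakly compact. Restricting a weakly compact operator to the closed subspace $\B_0\subset\B$ preserves weak compactness, which yields (c). The implication (c)$\Rightarrow$(d) is then immediate: one composes $T_g:\B_0\to A$ with the bounded inclusion $A\hookrightarrow H^\infty$, so that $T_g:\B_0\to H^\infty$ inherits weak compactness.

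It remains to close the loop with (d)$\Rightarrow$(a), and here I would invoke Lemma~\ref{biadjunto_general} with $X=\B_0$. The operator $T_g$ is continuous on $\H(\D)$, and, as recorded in the remark following that lemma, $\B_0$ is an admissible choice because $\B_0^{**}\simeq\B$ carries on its unit ball the weak-star topology of uniform convergence on compact subsets of $\D$. Assuming (d), the equivalence of the weak compactness of an operator and of its bi-adjoint then shows that $T_g:\B_0\to H^\infty$ is weakly compact precisely when its bi-adjoint $T_g:\B\to H^\infty$ is, i.e.\ precisely when (a) holds.

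The subtle point is that weak compactness does not pass freely between $\B_0$ and $\B$ in both directions: restriction to the subspace is used in one direction ((b)$\Rightarrow$(c)), while recovering information on all of $\B$ from $\B_0$ requires the duality identification $\B_0^{**}\simeq\B$ and a verification that the hypotheses of Lemma~\ref{biadjunto_general} hold for $\B_0$. The genuinely non-elementary inputs are therefore the automatic weak compactness of operators from an $\ell^\infty$-type space into a separable target and the bidual structure of the little Bloch space; once these are granted, the argument is a purely formal chaining of the four statements.
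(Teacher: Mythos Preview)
Your proof is correct and follows essentially the same approach as the paper: Corollary~\ref{co:Xweak} with $\B\simeq\ell^\infty$ for (a)$\Leftrightarrow$(b), automatic weak compactness into a separable target for the passage to $\B_0$, and Lemma~\ref{biadjunto_general} with $\B_0^{**}\simeq\B$ to climb back from $\B_0$ to $\B$. The only cosmetic difference is that the paper runs the bidual argument twice (for (c)$\Rightarrow$(b) and (d)$\Rightarrow$(a)) and then notes that (a),(b)$\Rightarrow$(c),(d) is trivial, whereas you organize the implications as a single cycle (b)$\Rightarrow$(c)$\Rightarrow$(d)$\Rightarrow$(a) using composition with the inclusion $A\hookrightarrow H^\infty$ for the step (c)$\Rightarrow$(d); the content is the same.
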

\begin{proof}
By Corollary~\ref{co:Xweak}, (a) and (b) are equivalent. If (c)
holds, then
   $T_g^{**}$ sends $ \B_0^{**}\simeq\B$ to $A$, so (b) follows from Lemma \ref{biadjunto_general}.
   The implication (d)$\Rightarrow$(a) can be proved analogously.
Being trivial that (a) and (b) imply (c) and (d), this finishes
the proof.\end{proof}

Corollary~\ref{co:Xweak} can also be applied to $H^\infty$. To do this, a result similar to Lemma~\ref{biadjunto_general} for $A$ will be needed.

\begin{lemma}\label{biadjunto} Let $T:\mathcal{H}(\D)\to \mathcal{H}(\D)$ continuous such that
$T:A\to H^\infty$ is weakly compact. Then $T^{**}(f)=T(f)$ for all
$f\in H^\infty$.
\end{lemma}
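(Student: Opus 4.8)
The plan is to mirror the proof of Lemma~\ref{biadjunto_general}, the one new difficulty being that $A^{**}$ is \emph{not} a space of analytic functions: by \eqref{eq:A^**} it splits as $A^{**}=H^\infty\oplus\K_s^*$, so I must isolate the analytic summand $H^\infty$, which sits inside $A^{**}=\K^*$ (via $\K_a^*\simeq H^\infty$) as those functionals on $\K$ that vanish on $\K_s$. First, since $T:A\to H^\infty$ is weakly compact, the bi-adjoint maps $A^{**}$ into the canonical image of $H^\infty$ by \cite[p.~52]{Woj}; being a bi-adjoint, $T^{**}$ is in addition continuous from $\sigma(A^{**},A^*)$ to $\sigma(H^\infty,(H^\infty)^*)$, i.e.\ to the weak topology of $H^\infty$ (on the image, the $\sigma((H^\infty)^{**},(H^\infty)^*)$ topology restricts to the weak topology). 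The key structural observation is that on the copy of $H^\infty=\K_a^*$ just described, the restriction of $\sigma(A^{**},A^*)=\sigma(\K^*,\K)$ is exactly $\sigma(H^\infty,\K_a)$, because pairing such a functional against $\gamma=\gamma_a+\gamma_s\in\K$ only sees $\gamma_a$; and by Lemma~\ref{le:wstartopology} (applied with $Y=H^\infty$ and $X=\K_a$) this topology coincides, on bounded sets, with uniform convergence on compact subsets of $\D$.

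Fix $f\in H^\infty$, viewed as this element of $A^{**}$. By Goldstine's theorem \cite[p.~31]{Woj} there is a bounded net $\{a_\alpha\}\subset A$ with $a_\alpha\to f$ in $\sigma(A^{**},A^*)$; testing only against $\K_a$ and invoking the previous paragraph, the same net satisfies $a_\alpha\to f$ uniformly on compact subsets of $\D$. I would then run the two convergences in parallel. On one hand, $T(a_\alpha)=T^{**}(a_\alpha)\to T^{**}(f)$ in the weak topology of $H^\infty$, hence pointwise in $\D$ since point evaluations are bounded on $H^\infty$. On the other hand, $T:\mathcal H(\D)\to\mathcal H(\D)$ is continuous, so $a_\alpha\to f$ uniformly on compacta forces $T(a_\alpha)\to T(f)$ uniformly on compacta, hence again pointwise. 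Comparing the two pointwise limits yields $T^{**}(f)=T(f)$, which is the assertion.

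The step I expect to require the most care is the passage to a \emph{net} rather than a sequence. In Lemma~\ref{biadjunto_general} the ball of $X^{**}$ was weak-star metrizable, permitting an approximating sequence; here $A^*=\K$ contains the non-separable space $\K_s$ of singular Cauchy transforms, so $B_{A^{**}}$ is not weak-star metrizable and only a net is available from Goldstine. This is harmless, because both ``weak convergence in $H^\infty$'' and ``uniform convergence on compacta'' descend to pointwise limits along arbitrary nets, and continuity of $T$ on $\mathcal H(\D)$ respects net convergence; one must simply avoid invoking any sequential extraction. The other point to keep honest is that the approximating net converges to $f$ \emph{in the $\K_a$-summand}, so that the singular part $\K_s^*$ of $A^{**}$ never interferes: the net furnished by Goldstine automatically has $\langle a_\alpha,\gamma_s\rangle\to 0$ for every $\gamma_s\in\K_s$, which is precisely the net-level condition that cannot be achieved by dilations or by any pointwise-convergent sequence, and it is exactly this freedom of nets that makes the identification of the limit as the analytic function $T(f)$ go through.
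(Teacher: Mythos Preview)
Your proof is correct and follows essentially the same approach as the paper: Goldstine's theorem produces a bounded net in $A$ converging to $f$ in $\sigma(A^{**},A^*)$, which via $\K_a\subset A^*$ and Lemma~\ref{le:wstartopology} converges uniformly on compacta, and then the weak limit $T^{**}(f)$ is identified with the compact-open limit $T(f)$. Your explicit discussion of why a net (rather than a sequence) is required here, and why this causes no trouble since $\H(\D)$ is metrizable, is a welcome clarification that the paper leaves implicit.
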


\begin{proof}
Recall that $T:A\to H^\infty$ is weakly compact if and only if
$T^{**}:A^{**}\to H^\infty$ is bounded~\cite[Theorem~6(c), p.~52]{Woj}. Let $f\in
H^\infty$. Since $H^\infty\subset A^{**}$ by \eqref{eq:A^**},
Goldstine's Theorem~\cite[p. 31]{Woj} implies that there exists a
bounded net $\{f_\delta\}$ in $A$ such that $\{f_\delta\}$ converges
to $f$ in the weak-star topology $\sigma(A^{**},A^*)$. Since $T:A\to
H^\infty$ is weakly compact, we deduce that
$T(f_\delta)=T^{**}(f_\delta)$ converges to $T^{**}(f)\in A$ in the
weak topology $\sigma(H^\infty,(H^\infty)^*)$. It follows that
$T^{**}(f_\delta)\to T^{**}(f)$ in the weak-star topology of
$H^\infty$, and hence uniformly on compact subsets of $\D$ by
Lemma~\ref{le:wstartopology}. Since $f_\d\in A$ and
$\K_a\subset\K\simeq A^*$, the $\sigma(A^{**},A^*)$-convergence
implies the convergence of $\{f_\d\}$ to $f$ in the weak-star
topology $\sigma(H^\infty,\K_a)$. Therefore $f_\delta\to f$
uniformly on compact subsets of $\D$ by
Lemma~\ref{le:wstartopology}, and further, by the hypotheses,
$T(f_\delta)\to T(f)$ uniformly on compact subsets of $\D$. Hence
$T^{**}(f)=T(f)$.
\end{proof}

\begin{Prf}{\em{Corollary~\ref{co:weakAA}.}}
The statements (a) and (b) are equivalent by
Corollary~\ref{co:Xweak}. If (b) is satisfied, then $T_g:A\to A$ is
bounded and by (a), it is weakly compact.  Conversely, assume that
$T_g:A\to A$ is weakly compact. Then $T_g^{**}:A^{**}\to A$ is
bounded by~\cite[p.~52]{Woj}, and hence $T_g^{**}:H^\infty\to A$ is
bounded by \eqref{eq:A^**}. But $T_g^{**}=T_g$ on $H^\infty$ by
Lemma~\ref{biadjunto}, and so $T_g:H^\infty\to A$ is bounded. Thus
(b) and (c) are equivalent.
\end{Prf}

\begin{corollary}\label{le:XAinfty}
Let $X\subset\H(\D)$ be a Banach space such that $H^\infty\subset X$.
\begin{itemize}
\item[\rm(i)] If $X$ is separable, then $T(X,H^\infty)\subset A$.
\item[\rm(ii)] If $X$ is reflexive and admissible, then $T(X,H^\infty)=T(X,A)$.
\end{itemize}
\end{corollary}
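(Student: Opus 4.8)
The plan is to reduce both statements to the weak compactness of the restricted operator $T_g:H^\infty\to H^\infty$ and then invoke Theorem~\ref{weakly_compact_imply_g_in_A}. Throughout I read the inclusion $H^\infty\subset X$ in the standard sense that the embedding $\iota:H^\infty\to X$ is bounded, so that whenever $T_g:X\to H^\infty$ is bounded, its restriction $T_g|_{H^\infty}=T_g\circ\iota:H^\infty\to H^\infty$ is again bounded with $\|T_g|_{H^\infty}\|_{H^\infty\to H^\infty}\le\|\iota\|\,\|T_g\|_{X\to H^\infty}$. Note also that the inclusion $A\subset H^\infty$ is isometric on $A$, so $\|T_g\|_{X\to H^\infty}\le\|T_g\|_{X\to A}$; this gives $T(X,A)\subset T(X,H^\infty)$ for free, and only the reverse inclusion in (ii) requires work.

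For part (i), fix $g\in T(X,H^\infty)$. Since $X$ is separable and $T_g:X\to H^\infty$ is continuous, the range $T_g(X)$ is a separable subset of $H^\infty$; let $Y=\overline{T_g(X)}$ be its closure, a separable closed subspace of $H^\infty$. The restriction $T_g|_{H^\infty}:H^\infty\to Y$ is then a bounded operator into a separable Banach space, so Theorem~\ref{operators_on_Hinfty1} (the ``in particular'' clause) shows it is weakly compact. Composing with the bounded inclusion $Y\hookrightarrow H^\infty$ preserves weak compactness, hence $T_g|_{H^\infty}:H^\infty\to H^\infty$ is weakly compact. Theorem~\ref{weakly_compact_imply_g_in_A} now yields $g\in A$, which is exactly $T(X,H^\infty)\subset A$.

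For part (ii), suppose $X$ is reflexive and admissible and let $g\in T(X,H^\infty)$. The decisive observation is that reflexivity makes $T_g:X\to H^\infty$ automatically weakly compact: by Kakutani's theorem the closed unit ball $B_X$ is weakly compact, and the weak-to-weak continuity of the bounded operator $T_g$ forces $T_g(B_X)$ to be relatively weakly compact in $H^\infty$. Restricting to $H^\infty$ as above, $T_g|_{H^\infty}:H^\infty\to H^\infty$ is weakly compact (a bounded operator precomposed with $\iota$), so Theorem~\ref{weakly_compact_imply_g_in_A} again gives $g\in A$. Having $g\in A$ together with the weak compactness of $T_g:X\to H^\infty$ and the admissibility of $X$, Proposition~\ref{rang} produces the boundedness of $T_g:X\to A$; that is, $g\in T(X,A)$. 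Combined with the trivial inclusion noted above, this proves $T(X,H^\infty)=T(X,A)$.

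The routine parts are the stability of weak compactness under composition with bounded maps and the passage to the separable range $Y$ in (i). I expect the only genuine subtlety to be the mechanism that turns boundedness into weak compactness on $H^\infty$: in (i) it is the separability of the codomain via Theorem~\ref{operators_on_Hinfty1}, while in (ii) it is the reflexivity of the domain. Both routes funnel into the single nontrivial input, Theorem~\ref{weakly_compact_imply_g_in_A}, so the main conceptual work has already been done there; the remaining care is merely to ensure that $H^\infty\subset X$ is used as a bounded embedding so that restriction to $H^\infty$ is legitimate.
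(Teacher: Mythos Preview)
Your proof is correct and follows essentially the same route as the paper: reduce to weak compactness of $T_g:H^\infty\to H^\infty$ and invoke Theorem~\ref{weakly_compact_imply_g_in_A}, then (in part (ii)) finish with Proposition~\ref{rang}. The only cosmetic difference is where the separability/reflexivity is applied: in (i) the paper applies Theorem~\ref{operators_on_Hinfty1} directly to the inclusion $\iota:H^\infty\to X$ (separable target $X$) rather than to $T_g|_{H^\infty}:H^\infty\to\overline{T_g(X)}$, and in (ii) the paper notes that $\iota:H^\infty\to X$ is weakly compact because $X$ is reflexive, whereas you observe that $T_g:X\to H^\infty$ itself is weakly compact---your version has the minor advantage that this weak compactness is exactly the hypothesis needed for Proposition~\ref{rang}, so no further restriction step is really required.
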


\begin{proof} (i) Consider the identity operator $i(f)=f$. By the hypothesis, $i:H^\infty\to X$ is bounded, and hence weakly compact by Theorem~\ref{operators_on_Hinfty1}. Now that $T_g:H^\infty \to H^\infty$ is the composition of $i:H^\infty\to X$
and $T_g:X\to H^\infty$, $T_g:H^\infty\to H^\infty$ is weakly compact, and therefore $g\in A$ by Theorem~\ref{weakly_compact_imply_g_in_A}.

(ii) Clearly, $T(X,A)\subset T(X,H^\infty)$. If $g\in T(X,H^\infty)$ and $X$ is reflexive, then $i:H^\infty\to X$ is weakly compact,
and the proof of (i) shows that $g\in A$. Hence $g\in T(X,A)$ by Proposition~\ref{rang}.
\end{proof}

We observe that Theorem~\ref{Thm-main-X}
together with Corollary~\ref{le:XAinfty} implies
Theorem~\ref{Thm-main-Xintro}. It is also worth mentioning that
Corollary~\ref{le:XAinfty} can be applied to the Hardy spaces and
weighted Bergman spaces induced by radial weights. In particular,
Corollary~\ref{le:XAinfty}(ii) together with
\cite[Corollary~7]{PelRatproj} implies the equivalence between (a)
and (b) in Theorem~\ref{Thm:g-is-constantintro}. Moreover, when $v$ is a typical weight, Corollary~\ref{le:XAinfty}(i) shows that $T(H_0^v,H^\infty)\subset A$.

We finish this section by asking whether or not each bounded operator $T_g:
A\to A$ is weakly compact. We do not know the answer to this
question, but we show that $T_g:A\to A$ is weakly
compact if $g\in\BRV\cap A$.

\begin{theorem} \label{pr:BRVA}
Let $g\in\H(\D)$ such that $T_g:A\to A$ is bounded and
$V(g,\theta)<\infty$ for all $\theta$. Then $T_g:A\to A$ is a weakly
compact, that is, $T_g:H^\infty\to A$ is bounded. In particular, if
$g\in\BRV\cap A$, then $T_g:A\to A$ is weakly compact.
\end{theorem}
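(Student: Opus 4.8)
The plan is to reduce the statement to a range inclusion and then to establish that inclusion by a boundary-continuity argument, so let me separate the routine reduction from the genuine difficulty.

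\emph{Reduction.} Since $T_g:A\to A$ is bounded, Theorem~\ref{Thm:main-1}(i) gives $g\in A$ together with $\sup_{z\in\D}\|G^{H^2}_{g,z}\|_{\K}<\infty$; by the equivalence (b)$\Leftrightarrow$(c) of Theorem~\ref{Thm:main-1}(ii) this last bound says precisely that $T_g:H^\infty\to H^\infty$ is bounded. Hence Corollary~\ref{co:weakAA} applies, and its equivalence (b)$\Leftrightarrow$(c) is exactly the asserted identity that $T_g:A\to A$ is weakly compact if and only if $T_g:H^\infty\to A$ is bounded. Because $T_g:H^\infty\to H^\infty$ is already bounded and $A$ carries the $H^\infty$-norm, only the inclusion $T_g(f)\in A$ for every $f\in H^\infty$ remains to be proved. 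The ``in particular'' clause is then immediate: if $g\in\BRV\cap A$, then $\BRV\subset T(H^\infty,H^\infty)$ and $g\in A$ make $T_g:A\to A$ bounded, while $g\in\BRV$ forces $V(g,\theta)<\infty$ for every $\theta$.

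\emph{Main step.} Fix $f\in H^\infty$ and set $F=T_g(f)\in H^\infty$. The hypothesis $V(g,\theta)<\infty$ yields $V(F,\theta)=\int_0^1|f(te^{i\theta})g'(te^{i\theta})|\,dt\le\|f\|_{H^\infty}V(g,\theta)<\infty$ for every $\theta$, so $F$ has a finite radial limit $F^*(e^{i\theta})$ at each boundary point. I would then show that $F$ extends continuously to $\overline{\D}$. A convenient device is to approximate $g$ by its dilations $g_\rho(z)=g(\rho z)$: each $g_\rho$ is analytic across $\T$, so $fg_\rho'\in H^\infty\subset H^1$ and Privalov's theorem gives $T_{g_\rho}(f)\in A$; since $A$ is closed in $H^\infty$ it would suffice to pass to the limit $\rho\to1^-$. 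Writing $F-T_{g_\rho}(f)=T_{g-g_\rho}(f)$ and integrating by parts turns the difference into $f\,(g-g_\rho)-\int_0^z f'(\zeta)(g-g_\rho)(\zeta)\,d\zeta$, whose first term tends to $0$ uniformly because $g\in A$ forces $\|g-g_\rho\|_{H^\infty}\to0$, so the entire difficulty is concentrated in the boundary behaviour of the remaining integral.

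\emph{Main obstacle.} A crude radial estimate of that integral reduces convergence to making $\int_{r_0}^1|g'(te^{i\theta})|\,dt$ small uniformly in $\theta$, i.e.\ to uniform radial integrability of $g'$ (the condition $g\in\BRV_0$); this is strictly stronger than the present hypotheses, and under the pointwise finiteness $V(g,\theta)<\infty$ the quantity $\sup_\theta V(g,\theta)$ may even be infinite, so the naive uniform bound is unavailable. The crux is therefore to replace these uniform estimates by a genuinely pointwise argument: at a fixed $\xi_0=e^{i\theta_0}$ one splits $F(z)-F^*(\xi_0)$ into a purely radial part, which tends to $0$ because the radial limit at $\theta_0$ exists, and an angular part $F(re^{i\theta})-F(re^{i\theta_0})=i\int_{\theta_0}^\theta f(re^{is})g'(re^{is})re^{is}\,ds$ produced by Cauchy's theorem, and one must control this angular part as $(r,\theta)\to(1,\theta_0)$ using the continuity of $g$ up to $\T$ together with the cancellation present in the non-absolute integral. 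Establishing this boundary control, rather than the formal reduction, is the main work, and it is precisely where the two hypotheses $g\in A$ and $V(g,\theta)<\infty$ must be used in tandem. Once $F$ is seen to be continuous on $\overline{\D}$, we conclude $F\in A$; alternatively the same conclusion can be packaged through Lemma~\ref{g_in_A} by verifying $\|F-F^\delta\|_{H^\infty}\to0$.
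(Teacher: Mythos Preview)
Your reduction is correct: via Theorem~\ref{Thm:main-1} and Corollary~\ref{co:weakAA} you legitimately reduce the task to showing $T_g(H^\infty)\subset A$. But you never actually prove this inclusion. You yourself flag the obstacle --- the $g_\rho$-approximation would need a \emph{uniform} tail bound on $\int_{r_0}^1|g'(te^{i\theta})|\,dt$, i.e.\ $g\in\BRV_0$, which is strictly stronger than the hypotheses --- and the radial/angular decomposition you sketch as a substitute is left with the admission that ``establishing this boundary control \dots\ is the main work.'' That work is not done, and it is not clear it can be completed along those lines: controlling the angular piece $\int_{\theta_0}^{\theta} f(re^{is})g'(re^{is})\,re^{is}\,ds$ as $r\to1^-$ with only pointwise finiteness of $V(g,\cdot)$ and $g\in A$ is exactly the missing idea.

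The paper avoids this difficulty by taking a different and shorter route. It does \emph{not} try to show $T_g(f)\in A$ for arbitrary $f\in H^\infty$; instead it proves $T_g:A\to A$ is weakly compact directly via Lemma~\ref{le:weakcompactnessA}. Since weak convergence of a bounded sequence in $A\subset C(\overline\D)$ is just pointwise convergence on $\overline\D$, one only has to check that $T_g(f_n)(\xi)\to0$ for every $\xi\in\T$ whenever $\{f_n\}\subset A$ is bounded and $f_n\to0$ uniformly on compacta. For such $f_n$ the value $T_g(f_n)(\xi)$ is already well defined (because $T_g(f_n)\in A$ by hypothesis), and the radial-limit identity $T_g(f_n)(\xi)=\xi\int_0^1 f_n(r\xi)g'(r\xi)\,dr$ together with the Dominated Convergence Theorem --- the integrable dominant being $|g'(r\xi)|$, available precisely because $V(g,\arg\xi)<\infty$ --- gives the required limit. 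This completely sidesteps the boundary-continuity problem you were stuck on: the paper never needs uniformity in $\theta$, only a single dominant at each fixed $\xi$.
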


A preliminary result is needed for the proof of Theorem~\ref{pr:BRVA}.

\begin{lemma}\label{le:weakcompactnessA}
Let $X\subset\H(\D)$ be a Banach space such that for every bounded sequence in $X$, there exists a
subsequence which converges uniformly on
compact subsets of $\D$ to an element of $X$. Assume that $T_g:X\to A$ is bounded. Then $T_g$ is weakly compact if
and only $T_g(f_n)(\xi)\to0$ for all $\xi\in\T$ and all bounded sequences $\{f_n\}$ in $X$ that converge to
zero uniformly on compact subsets of $\D$.
\end{lemma}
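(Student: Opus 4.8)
The plan is to reduce the statement to Lemma~\ref{le:weakdesc} and then to invoke the classical description of weak convergence of bounded sequences in spaces of continuous functions. First I would check that the pair $(X,A)$ together with the operator $T_g$ fulfils the three hypotheses of Lemma~\ref{le:weakdesc}. Hypothesis (a) holds because $A\subset H^\infty$, so each point evaluation $\delta_z$, $z\in\D$, is bounded on $A$; hypothesis (b) is exactly the assumption made on $X$; and hypothesis (c) follows from the continuity of $T_g$ on $\H(\D)$: if $f_n\to0$ uniformly on compact subsets of $\D$, then for $z$ in a compact set $K\subset\D$ the bound $|T_g(f_n)(z)|=\left|\int_0^z f_n(\zeta)g'(\zeta)\,d\zeta\right|\le|z|\sup_{\zeta\in[0,z]}|f_n(\zeta)|\,|g'(\zeta)|$ shows $T_g(f_n)\to0$ uniformly on $K$, since the segments $[0,z]$ with $z\in K$ lie in a fixed compact subset of $\D$ on which $g'$ is bounded. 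Hence Lemma~\ref{le:weakdesc} applies and yields: $T_g:X\to A$ is weakly compact if and only if $T_g(f_n)\to0$ in the weak topology of $A$ for every bounded sequence $\{f_n\}$ in $X$ with $f_n\to0$ uniformly on compact subsets of $\D$.

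Next I would translate weak convergence in $A$ into pointwise convergence on $\T$. By the maximum modulus principle the boundary restriction embeds $A$ isometrically as a closed subspace of $C(\T)$, so for a bounded sequence $\{h_n\}\subset A$ I claim that $h_n\to0$ weakly in $A$ if and only if $h_n(\xi)\to0$ for every $\xi\in\T$. The forward implication is immediate on testing against the evaluation functionals $\delta_\xi\in A^*$, $\xi\in\T$. For the converse, given $L\in A^*$ I would use the Hahn--Banach theorem to extend $L$ to $C(\T)$ and the Riesz representation theorem to write $L(h)=\int_\T h\,d\mu$ for a finite Borel measure $\mu$ on $\T$; since $\{h_n\}$ is uniformly bounded and tends to $0$ pointwise on $\T$, the dominated convergence theorem gives $L(h_n)\to0$, and as $L$ is arbitrary this is the asserted weak convergence.

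Finally I would apply this claim to $h_n=T_g(f_n)$. As $T_g:X\to A$ is bounded and $\{f_n\}$ is bounded in $X$, the sequence $\{T_g(f_n)\}$ is bounded in $A$, so the claim identifies its weak convergence to $0$ with the condition $T_g(f_n)(\xi)\to0$ for all $\xi\in\T$. Combining this with the equivalence furnished by Lemma~\ref{le:weakdesc} completes the proof. The only point demanding care is the second step: one must keep track of the subspace structure of $A$ inside $C(\T)$, so that every functional on $A$ arises from a measure via Hahn--Banach, and then apply dominated convergence to the fixed measure $\mu$. Everything else is a routine verification of the hypotheses of the already-established Lemma~\ref{le:weakdesc}.
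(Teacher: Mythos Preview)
Your proof is correct and follows essentially the same approach as the paper: reduce to Lemma~\ref{le:weakdesc}, then identify weak convergence of bounded sequences in $A$ with pointwise convergence via Hahn--Banach, Riesz representation, and dominated convergence. The only cosmetic difference is that you embed $A$ isometrically into $C(\T)$, whereas the paper uses $A\subset C(\overline{\D})$ and cites \cite[Theorem~1, p.~66]{Diestel} for the characterization of weak convergence there; your choice is arguably cleaner since the hypothesis concerns only $\xi\in\T$, and it avoids having to observe separately that $T_g(f_n)(z)\to0$ for $z\in\D$.
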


\begin{proof}
Assume first that $T_g:X\to A$ is weakly compact, and let $\{f_n\}$ be a bounded sequence in $X$ that converges to zero uniformly on compact
subsets of $\D$. Then $\{T_g(f_n)\}\to0$ in the weak topology of $A$ by Lemma~\ref{le:weakdesc}. Since the point evaluation functional $\delta_\xi(f)=f(\xi)$ is bounded in $A$ for each $\xi\in\overline{\D}$, the weak convergence implies $T_g(f_n)(\xi)=\delta_\xi(T_g(f_n))\to0$ for all $\xi\in\T$.

To see the converse, let $\{f_n\}$ be a bounded sequence in
$X$ that converges to zero uniformly on compact subsets of $\D$. Let
$C(\overline \D)$ denote the Banach space of complex-valued
continuous functions on $\overline\D$. Since $A\subset
C(\overline\D)$, the Hahn-Banach Theorem shows that $T_g(f_n)$
converges to zero in the weak topology of $A$, if and only if,
$T_g(f_n)$ converges to zero in the weak topology of $C(\overline
\D)$. Since $\{T_g(f_n)\}$ is a bounded sequence in $C(\overline
\D)$,  then $T_g(f_n)$ converges to zero in the weak topology of
$C(\overline \D)$ if and only if $\lim_{n\to\infty}T_g(f_n)(z)=0$
for all $z\in \overline \D$, \cite[Theorem 1, p.~66]{Diestel}.
 This  last fact happens by the hypotheses.
\end{proof}

\begin{Prf}{\em{Theorem~\ref{pr:BRVA}}.} We begin with showing that if $V(g,\theta)<\infty$, $f\in H^\infty$ and $T_g(f)(e^{i\theta})$ is well-defined, then
    \begin{equation}\label{pillu}
    T_g(f)(e ^{i\theta})=e^{i\theta}\int_0^1 f(re^{i\theta})g'(re^{i\theta})\,dr.
    \end{equation}
Note first that under these hypotheses, $\lim_{r\to
1^-}T_g(f)(re^{i\theta})=T_g(f)(e^{i\theta})$. For $n\in\N$, define
$h_n(r)=e^{i\theta} f(re^{i\theta})g'(re^{i\theta})\chi
_{[0,1-1/n)}$, where $\chi _{[0,1-1/n)}$ denotes the characteristic
function of $[0,1-1/n)$. Clearly, $h_n(r)$ converges to $e^{i\theta}
f(re^{i\theta})g'(re^{i\theta})$ for all $0<r<1$, and
    $$
    |h_n(r)|\le\|f\|_{H^\infty}|g'(re^{i\theta})|, \quad 0<r<1.
    $$
Since $|g'(re^{i\theta})|$ is integrable, the Dominated Convergence
Theorem gives
    \begin{equation*}
    \begin{split}
    T_g(f)((1-1/n)e^{i\theta})
    &=e^{i\theta}\int_0^{1-1/n}
    f(re^{i\theta})g'(re^{i\theta})dr\\
    &=\int_0^1h_n(r)dr\to
    e^{i\theta}\int_0^1 f(re^{i\theta})g'(re^{i\theta})dr, \quad  n\to\infty.
    \end{split}
    \end{equation*}

To prove the assertion, let $\{f_n\}\in A$ such that
$\sup_n\|f_n\|_A\le1$ and $\{f_n\}$ converges to zero uniformly on
compact subsets of $\D$. Let $\xi\in\T$ be fixed. Since
    $
    |f_n(r\xi)g'(r\xi)|\le|g'(r\xi)|
    $
and, for each $0<r<1$, $|f_n(r\xi)g'(r\xi)|\to0$, as $n\to\infty$, the Dominated Convergence Theorem and \eqref{pillu} yield
    $$
    |T_g(f_n)(\xi)|\le\int_0^1 |f_n(r\xi)g'(r\xi)|\,dr\to0.
    $$
Hence $T_g:A\to A$ is weakly compact by Lemma~\ref{le:weakcompactnessA}.
\end{Prf}

\section{Compact integral operators mapping into $H^\infty$}\label{section:compactness}

We begin this section with a characterization of the compactness of
$T_g$ that is of the same spirit as Theorem~\ref{Thm-main-X}. To do
this, we will need the following lemma concerning compact operators
mapping into $A$.

\begin{lemma}\label{le:compactA}
Let $X,Y\subset\H(\D)$ such that $Y\simeq X^*$ via the
$H(\beta)$-pairing. Assume that for every bounded sequence in $X$,
there is a subsequence which converges uniformly on compact subsets
of $\D$ to an element of $X$. Let $T:X\to A$ be bounded such that
$T(f_n)\to0$ uniformly on compact subsets of $\D$ for each bounded
sequence $\{f_n\}$ in $X$ such that $f_n\to0$ uniformly on compact
subsets of $\D$. Then $T:X\to A$ is compact if and only if
$\{T^*(K^{H(\beta)}_z):z\in\D\}$ is relatively compact in $Y$.
\end{lemma}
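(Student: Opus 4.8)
The plan is to treat the two implications separately; I expect the forward direction to be a short consequence of Schauder's theorem, while the converse carries the real weight.

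For necessity, I would start from the observation that the point evaluations $\{\delta_z:z\in\D\}$ form a bounded subset of $A^*$, since $\|\delta_z\|_{A^*}=\sup_{\|f\|_A\le1}|f(z)|\le1$ because $\|f\|_A=\|f\|_{H^\infty}$. If $T:X\to A$ is compact, then so is its adjoint $T^*:A^*\to X^*$ by Schauder's theorem, hence $\{T^*\delta_z:z\in\D\}$ is relatively compact in $X^*$. I would then identify, exactly as in the proof of Theorem~\ref{Thm-main-X}, the functional $T^*\delta_z$ with the element $T^*(K^{H(\beta)}_z)\in Y$ through the relation
$$\langle f,T^*(K^{H(\beta)}_z)\rangle_{H(\beta)}=\langle Tf,K^{H(\beta)}_z\rangle_{H(\beta)}=(Tf)(z)=(T^*\delta_z)(f),\quad f\in X.$$
Since $g\mapsto L_g$ is a norm-homeomorphism of $Y$ onto $X^*$, relative compactness of $\{T^*\delta_z\}$ in $X^*$ transfers to relative compactness of $\{T^*(K^{H(\beta)}_z):z\in\D\}$ in $Y$.

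For sufficiency, write $h_z=T^*(K^{H(\beta)}_z)$ and assume $\{h_z:z\in\D\}$ is relatively compact in $Y$. Given a bounded sequence $\{f_n\}$ in $X$ with $\|f_n\|_X\le M$, the subsequence hypothesis yields $\{f_{n_k}\}$ converging uniformly on compact subsets of $\D$ to some $f\in X$; setting $g_k=f_{n_k}-f$, this is a bounded sequence tending to $0$ uniformly on compacta. It suffices to show $\|T(g_k)\|_A\to0$, since this forces $T(f_{n_k})\to T(f)$ in norm and hence compactness of $T$ (this is the reduction encoded in Lemma~\ref{le:weakdesc}). Using $\|T(g_k)\|_A=\sup_{z\in\D}|(Tg_k)(z)|=\sup_{z\in\D}|\langle g_k,h_z\rangle_{H(\beta)}|$, the problem reduces to upgrading the pointwise decay $(Tg_k)(z)\to0$ — valid for each fixed $z$ by the assumed continuity of $T$ with respect to uniform convergence on compacta — to decay that is uniform in $z\in\D$.

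This uniformization is where I expect the only real difficulty, and it is precisely where relative compactness enters. Given $\e>0$, total boundedness of $\{h_z\}$ in $Y$ supplies finitely many points $z_1,\dots,z_m\in\D$ so that each $h_z$ lies within $\e/(2CM)$ of some $h_{z_j}$, where $C$ is the duality constant of $Y\simeq X^*$. Then
$$|\langle g_k,h_z\rangle_{H(\beta)}|\le C\|g_k\|_X\,\|h_z-h_{z_j}\|_Y+|(Tg_k)(z_j)|<\tfrac{\e}{2}+|(Tg_k)(z_j)|,$$
and since $\{z_1,\dots,z_m\}$ is a fixed finite subset of $\D$, the pointwise decay gives $\sup_{z\in\D}|\langle g_k,h_z\rangle_{H(\beta)}|<\e$ for all large $k$. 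The two points to watch are the legitimacy of the identity $(Tf)(z)=\langle f,T^*(K^{H(\beta)}_z)\rangle_{H(\beta)}$ for $Tf\in A$ (which follows as in Theorem~\ref{Thm-main-X}, the pairing against $K^{H(\beta)}_z$ reproducing point evaluation) and the \emph{uniform} choice of the $\e$-net; everything else is routine.
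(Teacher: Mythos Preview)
Your proof is correct and follows essentially the same strategy as the paper's: compactness of $T^*$ plus boundedness of the point-evaluation kernels for the forward direction, and exploiting relative compactness of $\{h_z\}$ to upgrade pointwise decay of $(Tg_k)(z)$ to uniform decay for the converse. The only cosmetic differences are that the paper introduces an auxiliary space $Z$ (with $A^*\simeq Z$ via the $H(\beta)$-pairing and $\|K^{H(\beta)}_z\|_Z=1$) to make the kernel identification explicit, and argues the sufficiency by contradiction via sequential compactness rather than directly via a finite $\varepsilon$-net.
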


\begin{proof}
Assume first that $T:X\to A$ is compact. Then $T^*:A^*\to X^*$ is
compact. Consider the space
$$
Z=\left\{f\in \H(\D): \sum_{n=0}^\infty \beta_n\hat f(n) z^n\in \K\right\}.
$$ Using $\lim_{n\to\infty}\sqrt[n]{\beta_n}= 1$, one can proof that  $A^*\simeq Z$ via the
$H(\beta)$-pairing. Since $\|K^{H(\beta)}_z\|_{Z}=1$ for all
$z\in\D$, the set $\{T^*(K^{H(\beta)}_z):z\in\D\}$ is relatively
compact in $Y$.

Assume now that $\{T^*(K^{H(\beta)}_z):z\in\D\}$ is relatively
compact in $Y$, and suppose on the contrary to the assertion that
$T:X\to A$ is not compact. Then, by Lemma~\ref{le:weakdesc}, there
exist $\varepsilon>0$ and a sequence $\{f_n\}$ in $X$ such that
$\|f_n\|_X=1$ and $f_n\to0$ uniformly on compact subsets of $\D$,
but $\|T(f_n)\|_A\ge2\varepsilon$ for all $n$. It follows that for
each $n$, there exists $z_n\in\D$ such that
$|T(f_n)(z_n)|\ge\varepsilon$. Since
$\{T^*(K^{H(\beta)}_z):z\in\D\}$ is relatively compact by the
hypothesis, there exists a subsequence $\{z_{n_k}\}$ and $h\in Y$
such that $T^*(K^{H(\beta)}_{z_{n_k}})$ converges to $h$ in the norm
topology of $Y$. Hence, there exists $N\in\N$ such that
    $
    |T(f_m)(z_{n_k})-\langle f_m,h\rangle_{H(\beta)}|<\varepsilon/4
    $
for all $m\in\N$ and $k\ge N$. Since $f_n\to0$ uniformly on compact
subsets of $\D$, $\lim_{m\to \infty}T(f_m)(z_{n_{N}})=0$ by the
hypothesis, and hence $\limsup_{m\to\infty}|\langle
f_m,h\rangle_{H(\beta)}|\le\varepsilon/4$. On the other hand,
    \begin{equation*}
    \begin{split}
    |\langle f_{n_k},h\rangle_{H(\beta)}|&=|T(f_{n_k})(z_{n_k}) -(T(f_{n_{k}})(z_{n_k})-\langle f_{n_{k}},h\rangle_{H(\beta)})|\\
&\geq |T(f_{n_k})(z_{n_k})|-|T(f_{n_{k}})(z_{n_k})-\langle
f_{n_{k}},h\rangle_{H(\beta)}|
> 3\varepsilon /4, \quad
 k\ge N.
    \end{split}
    \end{equation*}
This leads to a contradiction, and therefore $T:X\to A$ is compact.
\end{proof}

\begin{Prf}{\em{Theorem~\ref{th:compactH2pairing}}.}
In order to prove the first assertion, assume that $T_g:X\to
H^\infty$ is compact. Then $g\in A$ by
Theorem~\ref{weakly_compact_imply_g_in_A}. Hence $T_g:X\to A$ is
bounded by Proposition~\ref{rang}, and thus $T_g:X\to A$ is compact.
 The second assertion follows by Lemma~\ref{le:compactA}.
\end{Prf}

\medskip

It is known that $T_c(H^\infty,H^\infty)\subset
T_c(A,A)$~\cite[Proposition~3.7]{AJS}. The following result shows,
in particular, that $T_c(H^\infty,H^\infty)=T_c(A,A)$, and hence
gives an affirmative answer to~\cite[Problem~4.4]{AJS}.

\medskip

\begin{Prf}{\em{Theorem~\ref{compactness_Hinfty}.}}
An application of Theorem~\ref{th:compactH2pairing}, with
$X=H^\infty$, shows that (i) is equivalent to (iii). Since clearly
(iii) implies (ii) which in turn implies (iv), in order to show the
equivalences of the  first four statements, it is enough to show that (iv)
implies (i). To see this, note first that if $T_g:A\to H^\infty$ is
compact,
then $T_g^{**}: A^{**}\to (H^\infty)^{**}$ is compact. Since
$A^{**}=H^\infty\bigoplus \K_s^*$ and $T_g^{**}$ coincides with
$T_g$ in $H^\infty$ by Lemma~\ref{biadjunto}, $T_g:H^\infty\to
(H^\infty)^{**}$ is compact. Now that $T_g: H^\infty\to H^\infty$ is
bounded by the hypothesis and Theorem~\ref{Thm:main-1}(ii), we
deduce that $T_g: H^\infty\to H^\infty$ is compact.

Let us see that (i) implies (v). Since  $T_g:H^\infty\to H^\infty$
is $w^*-w^*$ continuous, there is an operator $S:\K_a\to \K_a$ such
that $S^*=T_g$. Since $T_g $ is compact, we also have that~$S$ is a
compact operator. Using that $\|K^{H^2}_z\|_{\K_a}=1$ for all
$z\in\D$, the set
$\{T_g^*(K^{H^2}_z):z\in\D\}=\{S(K^{H^2}_z):z\in\D\}$ is relatively
compact in $\K$. Conversely, assume that (v) is satisfied. By
Theorem \ref{Thm:main-1}(ii), the operator $T_g:H^\infty\to
H^\infty$ is bounded. Suppose that  is not compact. Then, by
Lemma~\ref{le:weakdesc}, there exist $\varepsilon>0$ and a bounded
sequence $\{f_n\}$ in $H^\infty$ such that $\|f_n\|_{H^\infty}=1$
and $f_n\to0$ uniformly on compact subsets of $\D$, but
$\|T_g(f_n)\|_{H^\infty}\ge2\varepsilon$ for all $n$. It follows
that for each $n$, there exists $z_n\in\D$ such that
$|T_g(f_n)(z_n)|\ge\varepsilon$. Since
$\{T_g^*(K^{H^2}_z):z\in\D\}$ is relatively compact in $\K$,
and then in $\K_a$,  there exists a subsequence $\{z_{n_k}\}$ and
$h\in \K_a$ such that $T_g^*(K^{H^2}_{z_{n_k}})$ converges to~$h$ in the norm of  $\K$. We get a contradiction arguing as in the
proof of Lemma \ref{le:compactA}. So $T_g:H^\infty\to H^\infty$ is
compact.\end{Prf}

\medskip

Next, we apply Theorem~\ref{th:compactH2pairing}  to the cases
$X=H^p$, $1\le p<\infty$ and $X=\BMOA$. We denote $D(a,r)=\{z\in\C:\,
|z-a|<r\}$, $a\in\C$,\, $r>0$.
\begin{theorem}\label{th:BMOAcompact}
Let $g\in\H(\D)$.
\begin{itemize}
\item[\rm(i)] If $1<p<\infty$, then the following statements are equivalent:
\begin{itemize}
\item[\rm(a)] $T_g:H^p\to H^\infty$ is compact;
\item[\rm(b)] $T_g:H^p\to A$ is compact;
\item[\rm(c)] $\{G^{H^2}_{g,z}:z\in\D\}$ is relatively compact in $H^{p'}$;
\item[\rm(d)]
    $\displaystyle
    \lim_{R\to1^-}\sup_{z\in\D}\int_{\T}\left(\int_{\Gamma(\xi)\setminus
    D(0,R)}\left|(G^{H^2}_{g,z})'(w)\right|^2\,dA(w)\right)^{\frac{p'}{2}}|d\xi|=0,
    $
    where
    $$\Gamma(\xi)=\left\{\z\in\D:|\t-\arg\z|<\frac12\left(1-|\z|\right)\right\}$$ is the lens-type region with vertex at $\xi=e^{i\t}$.
\end{itemize}
\item[\rm(ii)] If $0<p\le1$, then $T_g:H^p\to H^\infty$ is compact if and only if $g$ is constant.
\item[\rm(iii)]
The following statements are equivalent:
\begin{enumerate}
\item[(a)] $T_g:\BMOA\to H^\infty$ is compact;
\item[(b)] $T_g:\VMOA\to A$ is compact;
\item[(c)] $T_g:\BMOA\to A$ is compact;
\item[(d)] $T_g:\VMOA\to H^\infty$ is compact;
\item[(e)] $\{ G^{H^2}_{g,z}:z\in \D\}$ is a relatively compact in $H^1$;
\item[(f)] $
    \displaystyle\lim_{R\to1^-}\sup_{z\in\D}\int_{\T}\left(\int_{\Gamma(\xi)\setminus D(0,R)}\left|(G^{H^2}_{g,z})'(w)\right|^2\,dA(w)\right)^{\frac{1}{2}}|d\xi|=
    0.
    $
\end{enumerate}
\end{itemize}
\end{theorem}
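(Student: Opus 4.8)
The plan is to split the web of equivalences into a collection of \emph{structural} reductions, handled by the abstract machinery of Section~\ref{section:compactness}, and one \emph{analytic} core, handled by Littlewood--Paley theory and tent spaces. For part~(i) I would first apply Theorem~\ref{th:compactH2pairing} with $X=H^p$ (which is admissible and satisfies $H^\infty\subset H^p$): its part~(i) gives (a)$\Leftrightarrow$(b), while its part~(ii), together with the duality $(H^p)^*\simeq H^{p'}$ via the $H^2$-pairing and the normal-family property of $H^p$, gives (b)$\Leftrightarrow$(c). For part~(iii) the same theorem with $X=\BMOA$ yields (a)$\Leftrightarrow$(c). The $\VMOA$-statements (a)$\Leftrightarrow$(d) and (b)$\Leftrightarrow$(c) I would obtain by biadjoint arguments in the spirit of Lemmas~\ref{biadjunto_general} and~\ref{biadjunto}: since $\VMOA^{**}\simeq\BMOA$ and, by Lemma~\ref{le:wstartopology} applied to the separable space $H^1$, the topology $\sigma(\BMOA,H^1)$ coincides with uniform convergence on compacta on $B_{\BMOA}$, one gets $T_g^{**}=T_g$ on $\BMOA$; combined with the classical fact that $T$ is compact iff $T^{**}$ is (and that $T^{**}$ then maps into the range space), this transfers compactness between the $\VMOA$- and $\BMOA$-formulations. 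Finally, the passage to statement~(e) I would carry out exactly as in the proof of Theorem~\ref{compactness_Hinfty}: $T_g:\BMOA\to H^\infty$ is $w^*$--$w^*$ continuous (both weak-star topologies being uniform convergence on compacta on bounded sets), so it has a pre-adjoint $S$ with $S^*=T_g$, and $T_g$ is compact iff $S$ is, which by the identity $T_g^*(K^{H^2}_z)=G^{H^2}_{g,z}$ and $\|K^{H^2}_z\|=1$ is equivalent to $\{G^{H^2}_{g,z}:z\in\D\}$ being relatively compact in $H^1$.

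This reduces parts~(i) and~(iii) to the two analytic equivalences (c)$\Leftrightarrow$(d) and (e)$\Leftrightarrow$(f). Here I would invoke the Littlewood--Paley description $\|f\|_{H^q}^q\asymp|f(0)|^q+\int_\T\big(\int_{\Gamma(\xi)}|f'(w)|^2\,dA(w)\big)^{q/2}|d\xi|$, valid for $0<q<\infty$, which realizes $f\mapsto f'$ as an isomorphism of $H^q$ (modulo the value at the origin, and here $G^{H^2}_{g,z}(0)=0$) onto a closed subspace of the Coifman--Meyer--Stein tent space $T^q_2$ built on the cones $\Gamma(\xi)$. Thus $\{G^{H^2}_{g,z}\}$ is relatively compact in $H^q$ iff $\{(G^{H^2}_{g,z})'\}$ is relatively compact in $T^q_2$, and conditions~(d) (with $q=p'$) and~(f) (with $q=1$) say precisely that the tent norms of the tails $\chi_{\D\setminus D(0,R)}(G^{H^2}_{g,z})'$ tend to $0$ uniformly in $z$ as $R\to1^-$. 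The easy direction of each equivalence follows from total boundedness together with the dominated convergence theorem applied to the area functions individually. For the converse I would split $G^{H^2}_{g,z}$ into a tail, controlled by the hypothesis, and a head supported in a fixed disc $D(0,R)$: granting a uniform bound $\sup_z\|G^{H^2}_{g,z}\|_{H^q}<\infty$, on $D(0,R)$ the tent norm is dominated by the $L^2(D(0,R))$ norm and the heads form a normal family, hence are relatively compact.

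For part~(ii) the range $0<p<1$ is immediate: by the inclusion chain~\eqref{eq:inclusion} we have $\K_a\subset H^p$, so boundedness of $T_g:H^p\to H^\infty$ forces boundedness of $T_g:\K_a\to H^\infty$, whence $g$ is constant by Theorem~\ref{Thm:main-1}(vi); as compactness implies boundedness and a constant symbol gives $T_g\equiv0$, the equivalence follows. The borderline $p=1$ is the genuinely new point, since polynomials \emph{do} induce bounded operators $T_g:H^1\to H^\infty$ by Theorem~\ref{Thm:main-1}(iv). Here I would show compactness is incompatible with $g$ non-constant by testing against the normalized kernels $f_a(\zeta)=\frac{1-|a|^2}{(1-\overline a\zeta)^2}$, which satisfy $\|f_a\|_{H^1}\asymp1$ and tend to $0$ uniformly on compact subsets as $|a|\to1^-$; by Lemma~\ref{le:weakdesc} and Theorem~\ref{operators_on_Hinfty1} a compact $T_g$ would force $\|T_g(f_a)\|_{H^\infty}\to0$. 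An integration by parts and a concentration-of-mass computation give $T_g(f_a)(a)=\frac{g'(a)}{\overline a}+o(1)$, so $\|T_g(f_a)\|_{H^\infty}\to0$ would imply $g'(r\xi)\to0$ for every $\xi\in\T$; since this forces $g'$ to extend continuously to $\overline\D$ with vanishing boundary values, $g'\equiv0$ and $g$ is constant.

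The main obstacle is the hard direction of the tent-space equivalences, most acutely (f)$\Rightarrow$(e) at the non-reflexive endpoint $q=1$. The uniform tail condition controls the family only away from a fixed compact set, and no a priori bound on $\|G^{H^2}_{g,z}\|_{H^1}$—nor any local uniform bound on the $G^{H^2}_{g,z}$—is available, because boundedness of $T_g$ is not part of the hypotheses of part~(iii). Extracting both the uniform $H^1$-bound and the relative compactness of the heads from the tail condition alone is exactly where the tent-space structure is indispensable: I would use the atomic decomposition of $T^1_2$ and, for the delicate head estimate, the stopping-time construction of~\cite[(4.3)]{CMS} to produce the relatively compact approximations. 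For $1<p'<\infty$ in part~(i) the corresponding step is softer, since the reflexivity of $T^{p'}_2$ permits a Kolmogorov--Riesz type compactness criterion in place of the stopping-time argument.
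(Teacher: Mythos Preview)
Your structural reductions for parts~(i) and~(iii) are sound and match the paper: Theorem~\ref{th:compactH2pairing} gives (a)$\Leftrightarrow$(b)$\Leftrightarrow$(c), and the biadjoint arguments handle the $\VMOA$ statements. One minor slip: in your pre-adjoint argument for (a)$\Leftrightarrow$(e) you claim ``$S$ compact iff $\{G^{H^2}_{g,z}\}$ relatively compact,'' but only the forward implication follows from $\|K^{H^2}_z\|=1$; for the converse you need the sequential argument of Lemma~\ref{le:compactA}. More importantly, for the hard analytic direction you aim at (d)$\Rightarrow$(c) and (f)$\Rightarrow$(e), whereas the paper goes directly (d)$\Rightarrow$(a) and (f)$\Rightarrow$(a). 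This is not merely cosmetic. Your route requires a \emph{uniform} bound on the heads $\{(G^{H^2}_{g,z})'\chi_{D(0,R)}\}$ before you can invoke normal families, and you correctly note that no such bound is available from the tail condition alone. The paper sidesteps this entirely: writing $T_g(f_n)(z)=\langle f_n,G^{H^2}_{g,z}\rangle_{H^2}$ via the Green/Littlewood--Paley formula and splitting the resulting area integral at radius $R$, the head is controlled not by any bound on $G^{H^2}_{g,z}$ but by $|f_n'|<\varepsilon$ on $\overline{D(0,R)}$, while the tail is handled by H\"older (for $1<p<\infty$) or by the stopping-time inequality~\eqref{Eq:stopping-time} paired with $\|f_n\|_{\BMOA}$ (for BMOA). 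So the stopping-time argument is deployed to bound the \emph{pairing}, not to establish relative compactness of $\{G^{H^2}_{g,z}\}$ in $H^1$.

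The genuine gap is in part~(ii) at $p=1$. Your integration-by-parts computation yields
\[
T_g(f_a)(a)=\frac{g'(a)}{\overline a}-\frac{(1-|a|^2)g'(0)}{\overline a}-\frac{1-|a|^2}{\overline a}\int_0^a\frac{g''(\zeta)}{1-\overline a\zeta}\,d\zeta,
\]
and the last term is \emph{not} $o(1)$ under the available hypotheses. Compactness of $T_g:H^1\to H^\infty$ forces $g\in H^\infty$, hence $|g''(\zeta)|\lesssim(1-|\zeta|)^{-2}$; along a radius this gives $(1-r)\int_0^r(1-t)^{-2}(1-rt)^{-1}dt\asymp(1-r)^{-1}\to\infty$, so the remainder can blow up. Even granting that $\|T_g(f_a)\|_{H^\infty}\to 0$ forces the radial limits $g'(r\xi)\to 0$ for every $\xi$, your inference ``hence $g'$ extends continuously to $\overline\D$ with zero boundary values'' is unjustified: without knowing $g'$ lies in the Nevanlinna class (which is not implied by $g\in H^\infty$), everywhere-zero radial limits do not force $g'\equiv 0$. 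The paper's argument is entirely different and does not test against kernels. It first deduces that $\{G^{H^2}_{g,z}\}$ is relatively compact in $\BMOA$ (via $T_g^*:A^*\to(H^1)^*$), uses an $\varepsilon$-net together with $G^{H^2}_{g,\zeta}\in A\subset\VMOA$ to obtain a \emph{uniform} vanishing Carleson condition, specializes to $a=z$ in the Carleson square to get~\eqref{Eq:condition-ugly}, and then carries out an explicit Taylor-coefficient computation to show that for any $N$ with $\widehat g(N+1)\ne 0$ the supremum in~\eqref{Eq:condition-ugly} is bounded below by $|\widehat g(N+1)|^2 R^{2N+3}$, contradicting the limit. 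You should replace the testing-function approach by this route.
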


With the aim of proving Theorem~\ref{th:BMOAcompact},  some notation
and known results are introduced.  For a positive Borel measure
$\nu$ on $\D$, finite on compact sets, denote the usually called
square function by
    $$
    A_{2,\nu}(f)(\z)= \left(\int_{\Gamma(\z)}|f(z)|^2\,d\nu(z)\right)^{\frac12}.
    $$
For $0<p<\infty$, the tent space $T^p_2(\nu)$ consists of the
$\nu$-equivalence classes of $\nu$-measurable functions $f$ such
that $\|f\|_{T^p_2(\nu)}=\|A_{2,\nu}(f)\|_{L^p(\T)}$ is finite. It
is well known that neither the opening nor the exact shape of
$\Gamma(\xi)$ is relevant, so $\Gamma(\xi)$ can be replaced by any
``reasonable Stolz-type domain'' in the definition of $T^p_2(\nu)$
without changing the space~\cite{CMS} (see
also~\cite[Lemma~C]{Pel}). Define
    $$
    C^2_{2,\nu}(f)(\z)=\sup_{a\in\Gamma(\z)}\frac{1}{|I(a)|}\int_{T(a)}|f(z)|^2 (1-|z|)\,d\nu(z),\quad \z\in\T,
    $$
where $T(a)$ is a tent induced by the point $a\in\D\setminus\{0\}$.
A quasi-norm in the tent space $T^\infty_2(\nu)$ is defined by
$\|f\|_{T^\infty_2(\nu)}=\|C_{2,\nu}(f)\|_{L^\infty(\T)}$. Let
$dh(z)=dA(z)/(1-|z|^2)^2$ denote the hyperbolic measure, and
recall the well-known relations
    \begin{equation}\label{Eq:H^p-norm}
    \|f\|^{p}_{H^{p}}\asymp
\|f'(z)(1-|z|)\|^p_{T^p_2(h)}+|f(0)|^{p},\quad f\in\H(\D),
    \end{equation}
for $0<p<\infty$, and
    \begin{equation}\label{eq:BMOAnormequiv}
    \|f\|^{2}_{\BMOA}\asymp\|f'(z)(1-|z|^2)\|^2_{T^\infty_2(h)}+|f(0)|^2,\quad f\in\H(\D).
    \end{equation}
The original source for the theory of tent spaces is \cite{CMS}.

The proof of Theorem~\ref{th:BMOAcompact}(iii) is based on the
following stopping-time argument
 employed in the proof of
\cite[Theorem~1]{CMS}. For $\z\in\T$ and $0\le h\le\infty$, let
    \begin{equation*}
    \begin{split}
    \Gamma^h(\z)&=\Gamma(\z)\setminus \overline{D\left(0,\frac{1}{1+h}\right)}
    =\left\{z\in\D:|\arg z-\arg \z|<\frac{1-\left|z\right|}{2}<\frac{h}{2(1+h)}\right\}
    \end{split}
    \end{equation*}
and
    $$
    A^{2}_{2,\nu}(g|h)(\z)=\int_{\Gamma^h(\z)}|g(z)|^{2}\,d\nu(z),\quad \z\in\T.
    $$
For every $g\in T^\infty_{2}(\nu)$ and $\z\in\T$, define the
stopping time by
    $$
    h(\z)=\sup\left\{h:A_{2,\nu}(g|h)(\z)\le C_1C_{2,\nu}(g)(\z)\right\},
    $$
where $C_1>0$ is an appropriately chosen (large) constant. Then, by \cite[(4.3)]{CMS} (see
\cite[(2.3)]{Pel} for details in the case of $\D$), there
exists a constant $C_2>0$ such that
    \begin{equation}\label{Eq:stopping-time}
    \int_\D k(z)(1-|z|)\,d\nu(z)\le
    C_2\int_\T\left(\int_{\Gamma^{h(\z)}(\z)}k(z)\,d\nu(z)\right)\,|d\z|,
    \end{equation}
for all $\nu$-measurable non-negative functions $k$. With these preparations we are ready for the proof.

\medskip

\begin{Prf}{\em{Theorem~\ref{th:BMOAcompact}}.}
(i) The first three statements are equivalent by
Theorem~\ref{th:compactH2pairing}. To deal with (d), note first that, by
Theorem~\ref{Thm:main-1} and \eqref{Eq:H^p-norm},
$T_g:H^p\to H^\infty$ is bounded if and only if
    $$
    \sup_{z\in\D}\int_{\T}\left(\int_{\Gamma(\xi)}\left|(G^{H^2}_{g,z})'(\z)\right|^2\,dA(\z)\right)^{\frac{p'}{2}}|d\xi|<\infty.
    $$
If $f\in H^p$, then Green's theorem gives
    \begin{equation}\label{muna}
    \langle f,G^{H^2}_{g,z}\rangle_{H^2}=2\int_\D f'(\z)\overline{(G^{H^2}_{g,z})'(\z)}\log\frac1{|\z|}\,dA(\z)+f(0)\overline{G^{H^2}_{g,z}(0)}.
    \end{equation}
For $\z\in\D\setminus D(0,e^{-1})$, let
$I(\z)=\{e^{i\t}:|\t-\arg\z|<\frac12\log\frac1{|\z|}\}$. If $e^{-1}<R<1$, then
Fubini's theorem shows that
    \begin{equation}\label{muna2}
    \begin{split}
    &\int_{\D\setminus D(0,R)} f'(\z)\overline{(G^{H^2}_{g,z})'(\z)}\log\frac1{|\z|}\,dA(\z)\\
    &=\int_{\D\setminus D(0,R)} f'(\z)\overline{(G^{H^2}_{g,z})'(\z)}\int_{I(\z)}|d\xi|\,dA(\z)\\
    &=\int_\T\left(\int_{\Gamma'(\xi)\setminus D(0,R)}f'(\z)\overline{(G^{H^2}_{g,z})'(\z)}\,dA(\z)\right)|d\xi|,
    \end{split}
    \end{equation}
where $\Gamma'(\xi)=\{\z\in\D:|\arg\xi-\arg\z|<\frac12\log\frac1{|\z|}\}$ are lens-type regions.

If (c) is satisfied, then for given $\e>0$, there exist
$N=N(\e)\in\N$ and $h_1,\dots,h_N\in H^{p'}$ such that for each
$z\in\D$, we have $\|G^{H^2}_{g,z}-h_j\|_{H^{p'}}<\e$ for
some $j=j(z)\in\{1,\ldots,N\}$. It follows that
    \begin{equation*}
    \begin{split}
    &\int_{\T}\left(\int_{\Gamma(\xi)\setminus D(0,R)}\left|(G^{H^2}_{g,z})'(w)\right|^2\,dA(w)\right)^{\frac{p'}{2}}|d\xi|\\
    &\lesssim\e^{p'}+\int_{\T}\left(\int_{\Gamma(\xi)\setminus D(0,R)}\left|h_j'(w)\right|^2\,dA(w)\right)^{\frac{p'}{2}}|d\xi|,
    \end{split}
    \end{equation*}
and by choosing $R$ sufficiently close to 1, we deduce (d).

Assume now (d), and let $\{f_n\}$ be a uniformly norm bounded family
in $H^p$ such that $f_n\to0$ uniformly on compact subsets of $\D$.
To see that $T_g:H^p\to H^\infty$ is compact, by
Lemma~\ref{le:weakdesc}, it suffices to show that
$\|T_g(f_n)\|_{H^\infty}\to0$. Let $\e>0$,
$C_1=\sup_{z\in\D}\|G^{H^2}_{g,z}\|_{H^{p'}}^{p'}<\infty$,
$C_2=\sup_{n}\|f_n\|_{H^{p}}<\infty$ and $R=R(\e)\in(e^{-1},1)$ such
that
    $$
    \sup_{z\in\D}\int_{\T}\left(\int_{\Gamma(\xi)\setminus D(0,R)}\left|(G^{H^2}_{g,z})'(\z)\right|^2\,dA(\z)\right)^{\frac{p'}{2}}|d\xi|<\e^{p'},
    $$
and $N=N(R,\e)\in\N$ such that $|f_n(0)|,|f'_n(\z)|<\e$ for all
$n\ge N$ and $\z\in \overline{D(0,R)}$. Then, for $n\ge N$,
\eqref{le:tgformula}, \eqref{muna}, \eqref{muna2} and H\"older's
inequality give
    \begin{equation*}
    \begin{split}
    \|T_g(f_n)\|_{H^\infty}
    &\lesssim\sup_{z\in\D}\left|\int_\D f_n'(\z)\overline{(G^{H^2}_{g,z})'(\z)}\log\frac1{|\z|}\,dA(\z)\right|+|f_n(0)|\|g\|_{H^\infty}\\
    &\lesssim\sup_{z\in\D}\int_\T\left(\int_{\Gamma'(\xi)\setminus D(0,R)}|f_n'(\z)||(G^{H^2}_{g,z})'(\z)|\,dA(\z)\right)|d\xi|
    +C_1\e \\
        &\le\sup_{z\in\D}\left(\int_{\T}\left(\int_{\Gamma'(\xi)\setminus D(0,R)}\left|(G^{H^2}_{g,z})'(w)\right|^2\,dA(w)\right)^{\frac{p'}{2}}|d\xi|\right)^\frac1{p'}
    \\
    &\quad\cdot\left(\int_\T\left(\int_{\Gamma'(\xi)\setminus
    D(0,R)}|f_n'(\z)|^2\,dA(\z)\right)^\frac{p}{2}|d\xi|\right)^\frac1p+C_1\e\\
    &\lesssim (C_1+C_2)\e,
    \end{split}
    \end{equation*}
and it follows that $T_g:H^p\to H^\infty$ is compact. Thus (a) is
satisfied and the proof is complete.

(ii) Let $g\in\H(\D)$ be non-constant. It suffices to consider the
case $p=1$. If $T_g:H^1\to H^\infty$ is compact, then $T_g:H^1\to A$
is compact by Theorem~\ref{th:compactH2pairing}. Hence
$T_g^*:A^*\to(H^1)^*$ is compact and it follows that
$\{G_{z,g}^{H^2}:z\in\D\}$ is relatively compact in $\BMOA$. Hence,
for given $\e>0$, there exist $z_1,\ldots,z_N\in\D$ such that for
each $z\in\D$, we have
$\|G^{H^2}_{g,z}-G^{H^2}_{g,z_j}\|_{\BMOA}<\e$ for some
$j=j(z)\in\{1,\ldots,N\}$. Since
    \begin{equation*}
    \begin{split}
    &\sup_{a\in\D}\frac{1}{1-|a|}\int_{S(a)\setminus D(0,R)}|(G_{z,g}^{H^2})'(w)|^2(1-|w|^2)\,dA(w)\\
    &\lesssim\|G^{H^2}_{g,z}-G^{H^2}_{g,z_j}\|^2_{\BMOA}+\sup_{a\in\D}\frac{1}{1-|a|}\int_{S(a)\setminus D(0,R)}|(G^{H^2}_{g,z_j})'(w)|^2(1-|w|^2)\,dA(w)\\
    &\lesssim\e^2+\sup_{|a|\ge R}\frac{1}{1-|a|}\int_{S(a)}|(G^{H^2}_{g,z_j})'(w)|^2(1-|w|^2)\,dA(w),
    \end{split}
    \end{equation*}
  and  $G^{H^2}_{g,\z}\in A \subset\VMOA$ for each
$\z\in\D$,  we deduce that
    \begin{equation*}
    \begin{split}
    \lim_{R\to1^-}\sup_{a,z\in\D}\frac{1}{1-|a|}\int_{S(a)\setminus D(0,R)}|(G_{z,g}^{H^2})'(w)|^2(1-|w|^2)\,dA(w)=0,
    \end{split}
    \end{equation*}
which is equivalent to
    \begin{equation*}
    \begin{split}
    \lim_{R\to1^-}\sup_{a,z\in\D}\int_{\D\setminus D(0,R)}|(G_{z,g}^{H^2})'(w)|^2(1-|\vp_a(w)|^2)\,dA(w)=0.
    \end{split}
    \end{equation*}
By choosing $a=z$, we obtain
    \begin{equation}\label{Eq:condition-ugly}
    \lim_{R\to1^-}\sup_{z\in\D}\left((1-|z|^2)\int_{\D\setminus D(0,R)}\left|\frac{\int_0^z\frac{g'(\z)}{(1-\overline{w}\z)^2}\,d\z}{1-\overline{w}z}\right|^2(1-|w|^2)\,dA(w)\right)=0.
    \end{equation}
Now \eqref{eq:bi2} gives
\begin{equation*}
    \begin{split}
    \frac{\int_0^z\frac{g'(\z)}{(1-\overline{w}\z)^2}\,d\z}{1-\overline{w}z}
    &=\left(\sum_{k=0}^\infty\left(\sum_{n=0}^\infty(n+1)\widehat{g}(n+1) \frac{z^{n+k+1}}{n+k+1}\right)\overline{w}^k(k+1)\right)
    \left(\sum_{j=0}^\infty z^j\overline{w}^j\right)\\
    &=\sum_{m=0}^\infty\left(\sum_{k=0}^m\left(\sum_{n=0}^\infty(n+1)\widehat{g}(n+1) \frac{z^{n+1}}{n+k+1}\right)(k+1)z^{m}\right)\overline{w}^m,
    \end{split}
    \end{equation*}
and thus, by choosing $N\in\N$ such that $\widehat{g}(N+1)\ne0$, the
supremum in \eqref{Eq:condition-ugly} can be estimated downwards as
follows
    \begin{equation*}
    \begin{split}
    &\sup_{z\in\D}\left((1-|z|^2)\sum_{m=0}^\infty\left|\sum_{k=0}^m\left(\sum_{n=0}^\infty(n+1)\widehat{g}(n+1) \frac{z^{n+1}}{n+k+1}\right)(k+1)z^{m}\right|^2\int_R^1s^{2m+1}(1-s^2)\,ds\right)\\
    &\ge\sup_{0<r<1}\frac1{2\pi}\int_0^{2\pi}\Bigg((1-r)\sum_{m=0}^\infty r^{2m}\left(\int_R^1s^{2m+1}(1-s)\,ds\right)\\
    &\quad\cdot\left|
    \sum_{n=0}^\infty(n+1)\widehat{g}(n+1)\left(\sum_{k=0}^m\frac{(k+1)}{n+k+1}\right)
    (re^{i\t})^{n+1}\right|^2\Bigg)\,d\t\\
    &=\sup_{0<r<1}\Bigg((1-r)\sum_{m=0}^\infty r^{2m}\left(\int_R^1s^{2m+1}(1-s)\,ds\right)\\
    &\quad\cdot\sum_{n=0}^\infty(n+1)^2|\widehat{g}(n+1)|^2\left(\sum_{k=0}^m\frac{(k+1)}{n+k+1}\right)^2
    r^{2n+2}\Bigg)\\
    &\ge|\widehat{g}(N+1)|^2R^{2N+2}\left((1-R)\int_R^1\left(\sum_{m=0}^\infty (m+1)^2(Rs)^{2m}\right)s(1-s)\,ds
    \right)\\
    &\asymp|\widehat{g}(N+1)|^2R^{2N+2}\left((1-R)\int_R^1\frac{s(1-s)}{(1-(Rs)^2)^3}\,ds
    \right)\\
    &\gtrsim|\widehat{g}(N+1)|^2R^{2N+3},
    \end{split}
    \end{equation*}
so letting $R\to 1^-$, this contradicts \eqref{Eq:condition-ugly},
and the assertion follows.

(iii) The statements (a) and (c) are equivalent by Theorem~\ref{th:compactH2pairing}, and
clearly (c) implies (b).  If (b) is satisfied, then $T_g^*:A^*\to H^1$ is compact and hence (e) is satisfied. Also (b) clearly implies (d). Moreover, the proof of (i) shows that (e) implies (f). To complete the proof, it suffices to show that (a) follows by either (d) or (f).

Assume first (d). Then $T_g:\BMOA\to H^\infty$ is bounded by Theorem~\ref{Thm:main-1}(iv). Moreover, $T_g^{**}:\BMOA\to(H^\infty)^{**}$ is compact and since $T_g^{**}$ coincides with $T_g$ in $\VMOA^{**}\simeq\BMOA$ by Lemma~\ref{biadjunto_general}, $T_g:\BMOA\to H^\infty$ is compact.

Assume now (f), and let $\{f_n\}$ be a uniformly norm bounded family
in $\BMOA$ such that $f_n\to0$ uniformly on compact subsets of $\D$.
To see that $T_g:\BMOA\to H^\infty$ is compact, by
Lemma~\ref{le:weakdesc}, it suffices to show that
$\|T_g(f_n)\|_{H^\infty}\to0$. Let $\e>0$,
$C_1=\sup_{z\in\D}\|G^{H^2}_{g,z}\|_{H^{1}}<\infty$
and $N=N(R,\e)\in\N$ such that
$|f_n(0)|,|f'_n(\z)|<\e$ for all $n\ge N$ and $\z\in D(0,R)$. Then
H\"older's inequality yields
    \begin{equation*}
    \begin{split}
    \|T_g(f_n)\|_{H^\infty}
    &=\sup_{z\in\D}\left|\langle f_n,G^{H^2}_{g,z}\rangle_{H^2}\right|
    \lesssim\sup_{z\in\D}\int_\D|f_n'(\z)||(G^{H^2}_{g,z})'(\z)|\log\frac1{|\z|}\,dA(\z)+|f_n(0)|\|g\|_{H^\infty}\\
    &\lesssim C_1\e+\sup_{z\in\D}\int_{\D\setminus D(0,R)}|f_n'(\z)||(G^{H^2}_{g,z})'(\z)|(1-|\z|)\,dA(\z).
    \end{split}
    \end{equation*}
An application of the stopping time inequality
\eqref{Eq:stopping-time} to the function
$|f_n'|\cdot|(G^{H^2}_{g,z})'|\chi_{\D\setminus D(0,R)}$, together
with H\"older's inequality and \eqref{eq:BMOAnormequiv}, gives
    \begin{equation*}
    \begin{split}
    &\sup_{z\in\D}\int_{\D\setminus D(0,R)}|f_n'(\z)||(G^{H^2}_{g,z})'(\z)|(1-|\z|)\,dA(\z)
    \\ & \lesssim \sup_{z\in\D}
    \int_\T\left(\int_{\Gamma^{h(\xi)}(\xi)}|f_n'(u)|\cdot|(G^{H^2}_{g,z})'(u)|\chi_{\D\setminus D(0,R)}(u)\,dA(u)\right)|d\xi|\\
    &\lesssim\sup_{z\in\D}\int_\T C_{2,h}\left(f_n'(u)(1-|u|^2)\right)(\xi)A_{2,h}\left((G^{H^2}_{g,z})'(u)(1-|u|^2)\chi_{\D\setminus
    D(0,R)}(u)\right)(\xi)|d\xi|\\
    &\lesssim\|f_n\|_{\BMOA}\sup_{z\in\D}\int_{\T}\left(\int_{\Gamma(\xi)\setminus D(0,R)}\left|(G^{H^2}_{g,z})'(\z)\right|^2\,dA(\z)\right)^{\frac{1}{2}}|d\xi|,
    \end{split}
    \end{equation*}
and it follows that $T_g:\BMOA\to H^\infty$ is compact. Thus (a) is
satisfied and the proof if complete.
\end{Prf}

\medskip

Finally, we apply Theorem~\ref{th:compactH2pairing} to the
spaces $A^p_\om$ and $\B$.

\begin{theorem}
Let $\om\in\R$ and $g\in\H(\D)$.
\begin{itemize}
\item[\rm(i)] If $1<p<\infty$, then the following statements are equivalent:
\begin{enumerate}
\item[\rm(a)] $T_g:A^p_\omega\to H^\infty$ is compact;
\item[\rm(b)] $T_g:A^p_\omega\to A$ is compact;
\item[\rm(c)] $\{G^{A^2_\omega}_{g,z}:z\in \D\}$ is relatively compact in $A^{p'}_\omega$;
\item[\rm(d)]
    $\displaystyle
    \lim_{R\to1^-}\sup_{z\in\D}\int_{\D\setminus D(0,R)}\left|G^{A^2_\omega}_{g,z}(\z)\right|^{p'}\om(\z)\,dA(\z)=0.
    $
\end{enumerate}
\item[\rm(ii)]
The following statements are equivalent:
\begin{enumerate}
\item[(a)] $T_g:\B\to H^\infty$ is compact;
\item[(b)] $T_g:\B_0\to A$ is compact;
\item[(c)] $T_g:\B\to A$ is compact;
\item[(d)] $T_g:\B_0\to H^\infty$ is compact;
\item[(e)] $\{G^{A^2_\omega}_{g,z}:z\in\D\}$ is relatively compact in $A^1_\omega$;
\item[(f)]
    $
    \displaystyle
    \lim_{R\to1^-}\sup_{z\in\D}\int_{\D\setminus D(0,R)}\left|G^{A^2_\omega}_{g,z}(\z)\right|\om(\z)\,dA(\z)=0.
    $
\end{enumerate}
\end{itemize}
\end{theorem}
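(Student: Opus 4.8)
The plan is to instantiate Theorem~\ref{th:compactH2pairing} with $H(\beta)=A^2_\om$, whose reproducing kernel is $B^\om_z$, using the dualities $(A^p_\om)^*\simeq A^{p'}_\om$ ($1<p<\infty$) and $\B_0^*\simeq A^1_\om$ realized by the $A^2_\om$-pairing \cite{PelRatproj}, together with the identification $T_g^*(K^{A^2_\om}_z)=G^{A^2_\om}_{g,z}$ from \eqref{eq:Gbeta}. Both $A^p_\om$ and $\B$ are admissible and contain $H^\infty$, and bounded sequences in $A^p_\om$ (resp. $\B$) admit subsequences converging uniformly on compact subsets to an element of the space, so Theorem~\ref{th:compactH2pairing} applies. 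The remaining work in each part is to translate relative compactness of $\{G^{A^2_\om}_{g,z}:z\in\D\}$ into the concrete tail condition, which I would handle through the standard description of relatively compact subsets of a weighted Bergman space: a family $\mathcal F$ of analytic functions is relatively compact in $A^{q}_\om$ if and only if it is norm bounded and $\lim_{R\to1^-}\sup_{f\in\mathcal F}\int_{\D\setminus D(0,R)}|f|^{q}\om\,dA=0$.

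For part~(i), Theorem~\ref{th:compactH2pairing}(i) gives (a)$\Leftrightarrow$(b), and its part~(ii) with $Y=A^{p'}_\om$ gives (b)$\Leftrightarrow$(c). For (c)$\Leftrightarrow$(d) I would apply the criterion above with $q=p'$: the implication (c)$\Rightarrow$(d) is a finite $\e$-net argument, and for (d)$\Rightarrow$(c) the uniform tail decay is exactly~(d). The only point needing care is that~(d) already forces norm boundedness of the family. Since $G^{A^2_\om}_{g,z}$ is analytic in the second variable, the maximum modulus principle bounds $\sup_{|w|\le R'}|G^{A^2_\om}_{g,z}(w)|$ by $\sup_{|w|=R'}|G^{A^2_\om}_{g,z}(w)|$, and a sub-mean-value estimate over a small disc contained in an annulus $R_0<|u|<R''$, on which $\om$ is bounded below, controls the latter by the tail integral $\int_{\D\setminus D(0,R_0)}|G^{A^2_\om}_{g,z}|^{p'}\om\,dA$. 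Combined with~(d) this yields $\sup_{z\in\D}\|G^{A^2_\om}_{g,z}\|_{A^{p'}_\om}<\infty$, after which relative compactness follows from the tail condition.

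For part~(ii) the Bloch duality forces slightly different bookkeeping, since $\B$ is a dual space while $\B_0^*\simeq A^1_\om$. Theorem~\ref{th:compactH2pairing}(i) with $X=\B$ gives (a)$\Leftrightarrow$(c); the inclusion $\B_0\subset\B$ and the isometric embedding $A\hookrightarrow H^\infty$ give (c)$\Rightarrow$(b)$\Rightarrow$(d); and (d)$\Rightarrow$(a) follows by a biadjoint argument: $T_g:\B_0\to H^\infty$ compact makes $T_g^{**}$ compact, and by Lemma~\ref{biadjunto_general} (whose hypothesis is supplied by Lemma~\ref{le:wstartopology} applied to the separable predual $A^1_\om$) $T_g^{**}$ agrees with $T_g$ on $\B_0^{**}\simeq\B$, so $T_g:\B\to H^\infty$ is compact. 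Compactness of $T_g:\B_0\to A$ passes to relative compactness of $\{G^{A^2_\om}_{g,z}\}=\{T_g^*(K^{A^2_\om}_z)\}$ in $\B_0^*\simeq A^1_\om$ (the forward direction of Lemma~\ref{le:compactA}, using $\|K^{A^2_\om}_z\|=1$), giving (b)$\Rightarrow$(e), and (e)$\Leftrightarrow$(f) is the $q=1$ case of the Bergman relative-compactness criterion.

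The main obstacle is closing the loop from the function-theoretic conditions (e)/(f) back to the operator conditions in part~(ii): because $\B_0$ is not closed under bounded uniform-on-compacta limits, the converse direction of Lemma~\ref{le:compactA} is unavailable for $X=\B_0$, so I cannot simply read off (e)$\Rightarrow$(b). Instead I would prove (f)$\Rightarrow$(a) directly, applying Lemma~\ref{le:weakdesc} with $X=\B$ (for which the required hypotheses do hold) to reduce to showing $\|T_g(f_n)\|_{H^\infty}\to0$ whenever $\{f_n\}$ is bounded in $\B$ with $f_n\to0$ uniformly on compact subsets. Writing $T_g(f_n)(z)=\langle f_n,G^{A^2_\om}_{g,z}\rangle_{A^2_\om}$ via \eqref{le:tgformula} and expanding the pairing by the Littlewood--Paley formula for the $A^2_\om$-pairing as a weighted area integral of $f_n'\,\overline{(G^{A^2_\om}_{g,z})'}$, I would split the integral over $D(0,R)$ and $\D\setminus D(0,R)$: on the interior disc the factor $f_n'$ tends to $0$ uniformly, while on the boundary annulus the Bloch estimate $|f_n'(w)|(1-|w|^2)\lesssim\|f_n\|_\B$ reduces matters to condition~(f), with boundedness of $\{G^{A^2_\om}_{g,z}\}$ again secured by the maximum-modulus estimate. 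The delicate step will be matching the tail of this derivative pairing with the $A^1_\om$-tail appearing in~(f), where I expect to lean on the precise Littlewood--Paley weight for the $A^2_\om$-pairing, in the spirit of the proof of Theorem~\ref{th:BMOAcompact}.
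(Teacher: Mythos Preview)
Your plan is correct and matches the paper's proof closely. The only organizational difference is in part~(i): the paper closes the loop by proving (d)$\Rightarrow$(a) directly---splitting $\int_\D|f_n|\,|G^{A^2_\om}_{g,z}|\,\om\,dA$ at radius $R$ and applying H\"older on the outer annulus---rather than going through (d)$\Rightarrow$(c) via your relative-compactness criterion; both routes implicitly need $\sup_z\|G^{A^2_\om}_{g,z}\|_{A^{p'}_\om}<\infty$, which you rightly note must be extracted from~(d) (the paper simply asserts this constant is finite). For the step you flag as delicate in part~(ii), the paper converts the $A^1_\om$-tail in~(f) into a tail for $(G^{A^2_\om}_{g,z})'$ via the Cauchy estimate $(1-r)M_1(r,h')\le4M_1\big(\tfrac{1+r}{2},h\big)$, and then uses the Littlewood--Paley identity from \cite[Theorem~4.2]{PelRat} with weight $\om^\star(z)=\int_{|z|}^1\log\tfrac{s}{|z|}\,\om(s)s\,ds\asymp\om(z)(1-|z|)^2$ on $\D\setminus D(0,R)$ to finish.
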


\begin{proof}
(i) The first three statements are equivalent by
Theorem~\ref{th:compactH2pairing} and \cite[Corollary~7]{PelRatproj}. If (c) is satisfied, then for
given $\e>0$, there exist $N=N(\e)\in\N$ and $h_1,\dots,h_N\in
A^{p'}_\om$ such that for each $z\in\D$, we have
$\|G^{A^2_\omega}_{g,z}-h_j\|_{A^{p'}_\om}<\e$ for some
$j=j(z)\in\{1,\ldots,N\}$. It follows that
    \begin{equation*}
    \begin{split}
    \left(\int_{\D\setminus D(0,R)}\left|G^{A^2_\omega}_{g,z}(\z)\right|^{p'}\om(\z)\,dA(\z)\right)^\frac1{p'}
    &\le\e+\left(\int_{\D\setminus D(0,R)}\left|h_j(\z)\right|^{p'}\om(\z)\,dA(\z)\right)^\frac1{p'},
    \end{split}
    \end{equation*}
and by choosing $R$ sufficiently close to 1, we deduce (d).

Assume now (d), and let $\{f_n\}$ be a uniformly norm bounded family
in $A^p_\om$ such that $f_n\to0$ uniformly on compact subsets of
$\D$. To see that $T_g:A^p_\omega\to H^\infty$ is compact, by
Lemma~\ref{le:weakdesc}, it suffices to show that
$\|T_g(f_n)\|_{H^\infty}\to0$. Let $\e>0$,
$C_1=\sup_n\|f_n\|_{A^p_\om}<\infty$, $R=R(\e)\in(0,1)$ such that
    $$
    \sup_{z\in\D}\int_{\D\setminus D(0,R)}\left|G^{A^2_\omega}_{g,z}(\z)\right|^{p'}\om(\z)\,dA(\z)<\e^{p'},
    $$
and $N=N(R,\e)\in\N$ such that $|f_n(\z)|<\e$ for all $n\ge N$ and
$\z\in D(0,R)$. Moreover, let
$C_2=\sup_{z\in\D}\|G^{A^2_\omega}_{g,z}\|_{A^{p'}_\om}^{p'}<\infty$.
Then, for $n\ge N$, \eqref{le:tgformula}, Fubini's theorem and
H\"older's inequality give
    \begin{equation*}
    \begin{split}
    \|T_g(f_n)\|_{H^\infty}
    &\le\sup_{z\in\D}\int_\D|f_n(\z)|\left|G^{A^2_\omega}_{g,z}(\z)\right|\om(\z)\,dA(\z)\\
    &\le\sup_{z\in\D}\int_{\overline{D(0,R)}}|f_n(\z)|\left|G^{A^2_\omega}_{g,z}(\z)\right|\om(\z)\,dA(\z)\\
    &\ +\sup_{z\in\D}\left(\int_{\D\setminus D(0,R)}|f_n(\z)|^p\om(\z)\,dA(\z)\right)^\frac1p\left(\int_{\D\setminus D(0,R)}\left|G^{A^2_\omega}_{g,z}(\z)\right|^{p'}\om(\z)\,dA(\z)\right)^\frac1{p'}\\
    &\le\e C_2+C_1\e=(C_1+C_2)\e,
    \end{split}
    \end{equation*}
and it follows that $T_g:A^p_\omega\to H^\infty$ is compact.

(ii) The statements (a) and (c) are equivalent by Theorem~\ref{th:compactH2pairing}, and
clearly (c) implies (b).  If (b) is satisfied, then $T_g^*:A^*\to A^1_\om$ is compact and hence (e) is satisfied. Also (b) clearly implies (d). Moreover, arguing as in the proof of (i), one sees that (e) implies (f). To complete the proof, it suffices to show that (a) follows by either (d) or (f).

Assume first (d). Then $T_g:\B\to H^\infty$ is bounded by Theorem~\ref{Thm-main-2}(iii). Moreover, $T_g^{**}:\B\to(H^\infty)^{**}$ is compact and since $T_g^{**}$ coincides with $T_g$ in $\B_0^{**}\simeq\B$ by Lemma~\ref{biadjunto_general}, $T_g:\B\to H^\infty$ is compact.

Assume now (f), and let $\{f_n\}$ be a uniformly norm bounded family
in $\B$ such that $f_n\to0$ uniformly on compact subsets of $\D$. To
see that $T_g:\B\to H^\infty$ is compact, by
Lemma~\ref{le:weakdesc}, it suffices to show that
$\|T_g(f_n)\|_{H^\infty}\to0$. Let $\e>0$,
$C_1=\sup_n\|f_n\|_{\B}<\infty$, and
$C_2=\sup_{z\in\D}\|G^{A^2_\omega}_{g,z}\|_{A^{1}_\om}<\infty$.
 Since $(1-r)M_1(r,f')\le4M_1(\frac{1+r}{2},f)$ for all
$f\in\H(\D)$ by the Cauchy formula and Fubini's theorem, a change of
variable, the hypothesis $\om\in\R$ and the assumption (f) imply
that there exists $R=R(\e)\in(0,1)$ such that
    $$
    \sup_{z\in\D}\int_{\D\setminus D(0,R)}\left|(G^{A^2_\omega}_{g,z})'(\z)\right|(1-|\z|)\om(\z)\,dA(\z)<\e.
    $$
Let $N=N(R,\e)\in\N$ such that $|f_n(0)|,|f'_n(\z)|<\e$ for all
$n\ge N$ and $\z\in D(0,R)$. We write
$\om^\star(z)=\int_{|z|}^1\log\frac{s}{|z|}\om(s)s\,ds$. Since
$\om\in\R$, $\om^\star(z)\asymp\om(z)(1-|z|)^2$ on $\D\setminus
D(0,R)$ \cite[(1.29)]{PelRat}. Then, for $n\ge N$,
\eqref{le:tgformula}, \cite[Theorem~4.2]{PelRat} and Fubini's
theorem give
    \begin{equation*}
    \begin{split}
    \|T_g(f_n)\|_{H^\infty}
    &\le4\sup_{z\in\D}\int_\D|f'_n(\z)|\left|(G^{A^2_\omega}_{g,z})'(\z)\right|\om^\star(\z)\,dA(\z)+|f_n(0)||G^{A^2_\omega}_{g,z}(0)|\\
    &\lesssim C_2\e+\|f_n\|_\B\sup_{z\in\D}\int_{\D\setminus
    D(0,R)}\left|(G^{A^2_\omega}_{g,z})'(\z)\right|\frac{\om^\star(\z)}{1-|\z|}\,dA(\z)\\
  &\lesssim(C_2+C_1)\e.
    \end{split}
    \end{equation*}
It follows that
$T_g:\B\to H^\infty$ is compact. Thus (a) is satisfied and the proof
is complete.
\end{proof}

\section{Further comments and questions}\label{section:comments}

By Theorem~\ref{th:BMOAcompactintro}, there exists $g\in\H(\D)$ such that $T_g:H^1\to H^\infty$ is bounded but not compact. Likewise  $T_g:H^\infty\to H^\infty$ can be bounded and not compact~\cite{AJS}. With regard to other admissible spaces $X$, excluding those for which $T(X,H^\infty)$ contains constant functions only, we do not know whether or not each bounded operator $T_g:X\to H^\infty$ is automatically compact. The answer might not be the same for all admissible spaces~$X$, and it seems natural to look at $X=H^2$ as a prototype for the Hardy spaces $H^p$, $1<p<\infty$, or certain weighted Bergman spaces $A^p_\om$. In order to shed some light on this question, we focus on analytic functions with nonnegative Taylor coefficients.
It is easy to see that such a function $g(z)=\sum_{n=0}^\infty
\widehat{g}(n)z^n$ belongs to $H^\infty$ if and only if
$\sum_n\widehat{g}(n)<\infty$. Therefore, by
\cite[Proposition~3.4]{AJS}, $T_g:H^\infty \to H^\infty$ is bounded
if and only if it is compact. The next result shows that the same
phenomenon holds on $H^2$.

\begin{theorem}\label{H2Hinftypositive}
Let $g(z)=\sum_{n=0}^\infty \widehat{g}(n)z^n\in\H(\D)$ such that
$\widehat{g}(n)\ge0$ for all $n$. Then the following statements
are equivalent:
\begin{itemize}
\item[\rm(i)] $T_g:H^2\to H^\infty$ is bounded;
\item[\rm(ii)]
$\sum_{k=0}^\infty \left(\sum_{n=0}^\infty
\frac{(n+1)\widehat{g}(n+1)}{n+k+1}\right)^2<\infty$;
\item[\rm(iii)] $T_g:H^2\to H^\infty$ is compact.
\end{itemize}
Moreover,
    \begin{equation}\label{H2Hinftynormpositive}
    \|T_g\|^2_{H^2\to H^\infty}\asymp \sum_{k=0}^\infty
    \left(\sum_{n=0}^\infty
    \frac{(n+1)\widehat{g}(n+1)}{n+k+1}\right)^2.
    \end{equation}
\end{theorem}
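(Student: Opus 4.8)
The plan is to reduce everything to the $H^2$-norm of the functions $G^{H^2}_{g,z}$ and then exploit the nonnegativity of the coefficients. By Theorem~\ref{Thm:main-1}(iii) with $p=p'=2$, together with the norm estimate \eqref{eq:normGbeta} of Theorem~\ref{Thm-main-X} applied with $X=Y=H^2$, the operator $T_g:H^2\to H^\infty$ is bounded if and only if $\sup_{z\in\D}\|G^{H^2}_{g,z}\|_{H^2}<\infty$, and moreover $\|T_g\|_{H^2\to H^\infty}\asymp\sup_{z\in\D}\|G^{H^2}_{g,z}\|_{H^2}$. So the first task is to evaluate this supremum explicitly. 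From the expansion of $\int_0^z g'(\z)/(1-\overline{w}\z)\,d\z$ already recorded in the proof of Theorem~\ref{Thm:main-1}(vi), the $k$-th Taylor coefficient of $w\mapsto G^{H^2}_{g,z}(w)$ equals
\[
c_k(z)=\sum_{n=0}^\infty (n+1)\widehat{g}(n+1)\frac{\overline{z}^{\,n+k+1}}{n+k+1},
\]
so that $\|G^{H^2}_{g,z}\|_{H^2}^2=\sum_{k=0}^\infty |c_k(z)|^2$.

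Write $a_k=\sum_{n=0}^\infty \frac{(n+1)\widehat{g}(n+1)}{n+k+1}$. Since $\widehat{g}(n)\ge0$, the triangle inequality yields the \emph{uniform} domination $|c_k(z)|\le |z|^{k+1}\sum_{n=0}^\infty \frac{(n+1)\widehat{g}(n+1)}{n+k+1}|z|^n\le a_k$ for every $z\in\D$, whence $\|G^{H^2}_{g,z}\|_{H^2}^2\le \sum_{k=0}^\infty a_k^2$. For the matching lower bound I would restrict to $z=r\in(0,1)$ and let $r\to1^-$: by monotone convergence (all summands are nonnegative)
\[
\|G^{H^2}_{g,r}\|_{H^2}^2=\sum_{k=0}^\infty r^{2(k+1)}\Big(\sum_{n=0}^\infty (n+1)\widehat{g}(n+1)\frac{r^n}{n+k+1}\Big)^2\ \nearrow\ \sum_{k=0}^\infty a_k^2 .
\]
Hence $\sup_{z\in\D}\|G^{H^2}_{g,z}\|_{H^2}^2=\sum_{k=0}^\infty a_k^2$. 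Combined with the previous paragraph, this establishes the equivalence (i)$\Leftrightarrow$(ii) and the norm identity \eqref{H2Hinftynormpositive}.

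It remains to incorporate compactness. Trivially (iii) implies (i), so the crux is that (ii) forces $T_g$ to be compact. By Theorem~\ref{th:BMOAcompact}(i) with $p=2$, this is equivalent to $\{G^{H^2}_{g,z}:z\in\D\}$ being relatively compact in $H^2$. Here I would invoke the standard criterion for relative compactness in the Hilbert space $H^2$: a bounded set whose tails are equismall, that is, with $\lim_{K\to\infty}\sup_{z\in\D}\sum_{k\ge K}|c_k(z)|^2=0$, is relatively compact. Boundedness is precisely (ii), and the uniform bound $|c_k(z)|\le a_k$ from the previous paragraph gives $\sum_{k\ge K}|c_k(z)|^2\le \sum_{k\ge K}a_k^2$ for all $z\in\D$, which tends to $0$ as $K\to\infty$ exactly because $\sum_k a_k^2<\infty$. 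Thus (ii) implies (iii), closing the cycle.

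The argument is short once the right tools are in place, and the one genuinely important point—the reason the nonnegativity hypothesis cannot be dropped—is the uniform-in-$z$ domination $|c_k(z)|\le a_k$. It is this domination that simultaneously identifies the supremum of the norms (yielding (i)$\Leftrightarrow$(ii)) and forces the equismall-tail condition (yielding the otherwise-false implication (i)$\Rightarrow$(iii)). I expect the only mild technical care to lie in justifying the interchange of suprema and limits and the termwise monotone convergence, all of which are legitimate since every quantity involved is a sum of nonnegative terms.
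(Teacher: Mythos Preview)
Your proof is correct. The equivalence (i)$\Leftrightarrow$(ii) and the norm identity are obtained exactly as in the paper, with monotone convergence in place of Fatou's lemma (an immaterial difference here since all terms are nonnegative).

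For (ii)$\Rightarrow$(iii) you take a genuinely different and somewhat cleaner route. The paper invokes condition~(d) of Theorem~\ref{th:BMOAcompact}(i), which involves the derivative $(G^{H^2}_{g,z})'$ and an area integral near the boundary; it then expands this integral as a series and applies dominated convergence in the index $k$. You instead invoke condition~(c) of the same theorem, namely relative compactness of $\{G^{H^2}_{g,z}:z\in\D\}$ in $H^2$, and verify it via the elementary $\ell^2$ criterion of uniformly small tails, which follows immediately from your pointwise domination $|c_k(z)|\le a_k$ with $(a_k)\in\ell^2$. Your approach avoids the derivative computation and the passage through the Littlewood--Paley description of $H^2$, at the cost of appealing to the standard compactness criterion in $\ell^2$; the paper's approach stays within the explicit integral framework used elsewhere in Section~\ref{section:compactness}. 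Both are short once Theorem~\ref{th:BMOAcompact} is in hand.
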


\begin{proof}
By  Theorem~\ref{Thm:main-1}, (i) is satisfied if and only if
$\sup_{z\in\D}\left\|G^{H^2}_{g,z}\right\|_{H^{2}}<\infty$. Since
    \begin{equation*}
    \begin{split}
    \overline{G^{H^2}_{g,z}(w)}=\int_0^z\frac{g'(\z)}{1-\overline{w}\z}\,d\z
    =\sum_{k=0}^\infty\left(\sum_{n=0}^\infty (n+1)\widehat{g}(n+1)\frac{z^{n+k+1}}{n+k+1}\right)\overline{w}^k,
    \end{split}
    \end{equation*}
we deduce
    \begin{equation}
    \begin{split}\label{H^2condition}
    \sup_{z\in\D}\left\|G^{H^2}_{g,z}\right\|_{H^{2}}=   \sup_{z\in\D}
    \left(\sum_{k=0}^\infty
   \left|\sum_{n=0}^\infty(n+1)\widehat{g}(n+1)\frac{z^{n+k+1}}{n+k+1}\right|^2
   \right)^{\frac{1}{2}}.
    \end{split}
    \end{equation}
Now, let us observe that for any $z\in\D$,
    $$
    \sum_{k=0}^\infty
   \left|\sum_{n=0}^\infty(n+1)\widehat{g}(n+1)\frac{z^{n+k+1}}{n+k+1}\right|^2\le
   \sum_{k=0}^\infty \left(\sum_{n=0}^\infty
    \frac{(n+1)\widehat{g}(n+1)}{n+k+1}\right)^2,
    $$
and hence \eqref{H^2condition} gives
    \begin{equation}
    \begin{split}\label{H^2condition2}
    \sup_{z\in\D}\left\|G^{H^2}_{g,z}\right\|^2_{H^{2}}\le
\sum_{k=0}^\infty \left(\sum_{n=0}^\infty
\frac{(n+1)\widehat{g}(n+1)}{n+k+1}\right)^2.
 \end{split}
    \end{equation}
Moreover, by Fatou's
lemma,
    \begin{equation*}
    \begin{split}
    \sum_{k=0}^\infty \left(\sum_{n=0}^\infty
    \frac{(n+1)\widehat{g}(n+1)}{n+k+1}\right)^2
    &\le\liminf_{\rho \to 1^-}\sum_{k=0}^\infty
    \left(\sum_{n=0}^\infty
    \frac{(n+1)\widehat{g}(n+1)}{n+k+1}\rho^n\right)^2 \rho^{2{k+1}}\\
    &\le\sup_{z\in\D}\sum_{k=0}^\infty\left|\sum_{n=0}^\infty(n+1)\widehat{g}(n+1)\frac{z^{n+k+1}}{n+k+1}\right|^2,
    \end{split}
    \end{equation*}
 which together with \eqref{H^2condition2} gives
    \begin{equation*}
    \begin{split}
    \sup_{z\in\D}\left\|G^{H^2}_{g,z}\right\|^2_{H^{2}}=
    \sup_{z\in\D}
    \sum_{k=0}^\infty
    \left|\sum_{n=0}^\infty(n+1)\widehat{g}(n+1)\frac{z^{n+k+1}}{n+k+1}\right|^2
    =\sum_{k=0}^\infty \left(\sum_{n=0}^\infty
    \frac{(n+1)\widehat{g}(n+1)}{n+k+1}\right)^2.
    \end{split}
    \end{equation*}
Therefore Theorem~\ref{Thm:main-1} shows that
(i) and (ii) are equivalent and \eqref{H2Hinftynormpositive} holds.

To complete the proof it remains to show that $T_g:H^2\to H^\infty$ is
compact if (ii) is satisfied. By Theorem~\ref{th:BMOAcompact}(i) and Fubini's theorem
$T_g:H^2\to H^\infty$ is compact if and only if
    \begin{equation}
    \begin{split}\label{H^2conditioncopact}
    0 &= \lim_{r\to 1^-} \sup_{z\in\D}\int_{\D\setminus D(0,r)}
    |{G^{H^2}_{g,z}}'(\z)|^2(1-|\z|)\,dA(\z)
    \\ & =  \lim_{r\to 1^-} \sup_{z\in\D}
    \sum_{k=1}^\infty
    \left|\sum_{n=0}^\infty(n+1)\widehat{g}(n+1)\frac{z^{n+k+1}}{n+k+1}\right|^2
    k^2\left(\int_r^1 s^{2k+1}(1-s)\,ds\right)
    \\ & = \lim_{r\to 1^-}
    \sum_{k=1}^\infty \left(\sum_{n=0}^\infty
    \frac{(n+1)\widehat{g}(n+1)}{n+k+1}\right)^2  k^2\left(\int_r^1
    s^{2k+1}(1-s)\,ds\right).
    \end{split}
    \end{equation}
For each $k\in\N$,
    $$
    \sup_{r\in (0,1)}k^2\left(\int_r^1
    s^{2k+1}(1-s)\,ds\right)\le k^2\left(\int_0^1
    s^{2k+1}(1-s)\,ds\right)\le 1
    $$
and $\lim_{r\to
    1^-}k^2\left(\int_r^1 s^{2k+1}(1-s)\,ds\right)=0$.
Hence, by applying (ii) and the
dominated convergence theorem we deduce
    \begin{equation*}
    \begin{split}
    &\lim_{r\to 1^-}
    \sum_{k=1}^\infty \left(\sum_{n=0}^\infty
    \frac{(n+1)\widehat{g}(n+1)}{n+k+1}\right)^2  k^2\left(\int_r^1
    s^{2k+1}(1-s)\,ds\right) \\ &= \sum_{k=1}^\infty
    \left(\sum_{n=0}^\infty \frac{(n+1)\widehat{g}(n+1)}{n+k+1}\right)^2
    \left(\lim_{r\to 1^-}k^2\left(\int_r^1
    s^{2k+1}(1-s)\,ds\right)\right) =0,
    \end{split}
    \end{equation*}
which together with \eqref{H^2conditioncopact} finishes the proof.
\end{proof}

It is also worth mentioning that if $g(z)=\sum_{n=0}^\infty
\widehat{g}(n)z^n\in \H(\D)$ has nonnegative Taylor coefficients, then
    $$
    \sum_{k=0}^\infty
    \left(\sum_{n=0}^\infty
    \frac{(n+1)\widehat{g}(n+1)}{n+k+1}\right)^2=\| H(g')\|^2_{H^2},
    $$
where $H(g')$ is the action of the operator induced by the classical
Hilbert matrix
$$H=\left(%
\begin{array}{ccccc}
            1 & \frac{1}{2}  & \frac{1}{3}  & .  \\
  \frac{1}{2} & \frac{1}{3}  & \frac{1}{4}  & .  \\
  \frac{1}{3} & \frac{1}{4}  & \frac{1}{5}  & . \\
  .  & . & . & .  \\
\end{array}%
\right)
$$
on $g'$.

\end{document}